\numberwithin{equation}{section}
\theoremstyle{remark}
\newtheorem*{remk}{Remark}
\theoremstyle{plain} 
\newtheorem{thm}{Theorem}[section]
\newtheorem{lem} [thm]{Lemma}
\newtheorem{prop}[thm]{Proposition}
\newtheorem{clm}{Claim}
\theoremstyle{definition}
\newtheorem{defn}{Definition}[section]
\providecommand{\keywords}[1] {\small {\emph{Keywords :}} #1}
\NewDocumentCommand{\norm}{m m} {\IfNoValueTF{#1} {\left\Vert{#2}\right\Vert} {\left\Vert{#2}\right\Vert_{#1}} }
\NewDocumentCommand{\subjclass}{o m} {\IfNoValueTF{#1}{\small {\emph{Mathematics Subject Classification :}}}{\small {\emph{#1 Mathematics Subject Classification :}}} #2}
\title{Global Well-posedness and Scattering for the Focusing Energy-critical Inhomogeneous Nonlinear Schr\"{o}dinger Equation with Non-radial Data}
\author{Dongjin Park}
\renewcommand\@date{{
	\vspace{-\baselineskip}
	\vspace{1ex}
	{\normalsize \texttt{aseastar@kaist.ac.kr}}
	\\[1ex]
	{\normalsize Department of Mathematical Sciences,
	 Korea Advanced Institute of Science and Technology,
	 Daejeon 34141, Republic of Korea}
	}}
\begin{document}

\maketitle

\begin{abstract}
We consider the focusing energy-critical inhomogeneous nonlinear Schr\"{o}dinger equation
\[
iu_t + \Delta u = -|x|^{-b}|u|^{\alpha}u
\]
where $n \geq 3$, $0<b<\min(2, n/2)$, and $\alpha=(4-2b)/(n-2)$.
We prove the global well-posedness and scattering for every $n \geq 3$, $0<b<\min(2, n/2)$, and every non-radial initial data $\phi \in \dot{H}^1(\mathbb{R}^n)$ by the concentration compactness arguments of Kenig and Merle (2006) as in the work of Guzman and Murphy (2021). Lorentz spaces are adopted during the development of the stability theory in critical spaces as in Killip and Visan (2013).
The result improves multiple earlier works by providing a unified approach to the scattering problem, accepting both focusing and defocusing cases, and eliminating the radial-data assumption and additional restrictions to the range of $n$ and $b$.
\end{abstract}

\vspace{0.6em}

\noindent
\keywords{Inhomogeneous nonlinear Schr\"{o}dinger equation, scattering, stability theory, concentration-compactness principle, Lorentz spaces}

\noindent
\subjclass[2020]{35Q55, 35P25, 35B40}

\noindent
\emph{Declarations of interest :} None

\section{Introduction} \label{Section:1}

Let $n\geq 3$ be the dimension, $0<b<\min(2, n/2)$, $\mu = \pm 1$, and $\alpha > 0$.
We consider the following Cauchy problem of the \emph{inhomogeneous nonlinear Schr\"{o}dinger equation} (abbrev. \emph{INLS}).
\begin{equation} \label{INLS}
\left\{
\begin{aligned}
&iu_t + \Delta u = g(x)|u|^{\alpha}u \\
&u(t_0) = \phi \in \dot{H}^1(\mathbb{R}^n)
\end{aligned}
\right. \tag{INLS}
\end{equation}
where $g(x) = \mu |x|^{-b}$.
The equation has a background in physics as a mathematical model in the field of the Bose-Einstein condensation or the laser guiding with axially non-uniform plasma channels. (We refer to \cite{BelmonteBeitia200798}, \cite{YonggeunCho19}, \cite{FibichWang03}, \cite{Gill2000}, and \cite{TangShukla200776} for details.)

\vspace{0.6em}
The coefficient $g(x)$ in the nonlinear term of {INLS} represents the particle interactions or the electron density in the respective research field and lies in $C^N(\mathbb{R}^n \setminus \{0\})$ for some $N\in\mathbb{N}$
with growth or decay conditions as $|x| \to 0$ or $\infty$. (See \cite{YonggeunCho19}, \cite{FibichWang03}, \cite{Genoud07}, \cite{LiuWangWang06}, and \cite{Merle96} as examples.)
In this paper, we specifically consider $g(x) = \mu|x|^{-b}$, from which we can utilize both the scaling symmetry and the vanishing at infinity.
When $b = 0$, \eqref{INLS} reduces to the standard nonlinear Schr\"odinger equation (abbrev. \emph{NLS}).

\vspace{0.6em}
We start with recalling common terminologies in the field of NLS.
Given a solution $u : I \times \mathbb{R}^n \to \mathbb{C}$ to \eqref{INLS},
we refer to the time interval $I$ as its \emph{lifespan} and assume that $I$ contains the initial time $t = t_0$.
For the parameter $\mu$ of the nonlinearity sign,
we say that the equation \eqref{INLS} is \emph{defocusing} when $\mu = +1$ and \emph{focusing} when $\mu = -1$.
For the parameter $\alpha$ of the nonlinearity power,
we associate a value called the \emph{critical regularity} ${s_c} = n/2-(2-b)/\alpha$
as the following scaling transformation preserves both the equation \eqref{INLS} itself and the solution's $\dot{H}_x^{{s_c}}(\mathbb{R}^n)$ norm simultaneously.
\begin{equation*}
u(t,x) ~\mapsto~ u_\lambda(t,x) := \lambda^{\frac{n-2{s_c}}{2}}u(\lambda^2 t, \lambda x) \quad(\lambda>0)
\end{equation*}

\vspace{0.6em}
Provided that a solution $u$ of \eqref{INLS} lies in $C_t(I,H_x^1(\mathbb{R}^n))$,
we can observe the conservation of the following two quantities throughout the lifespan:
the \emph{mass} $M[u]$ and the \emph{energy} (or \emph{Hamiltonian}) $E[u]$.
\begin{align*}
M[u] = M[u(t)]
&:= \int_{\mathbb{R}^n} \big|u(t,x)\big|^2 dx \\
E[u] = E[u(t)]
&:=	\int_{\mathbb{R}^n} \frac{1}{2} \big|\nabla u(t,x)\big|^2 + \frac{\mu}{\alpha+2}|x|^{-b}|u(t,x)|^{\alpha+2} dx
\end{align*}
Three important cases $s_c = 0$, $s_c \in (0, 1)$, and $s_c = 1$ are named as \emph{mass-critical}, \emph{intercritical}, and \emph{energy-critical} respectively, which is based on the fact that the scaling transformation also preserves $M[u(t)]^{1-{s_c}} E[u(t)]^{{s_c}}$ during the lifespan.


\vspace{0.6em}
Before discussing the research history of INLS,
we would like to review the case of NLS.
The energy-critical regime is mostly concerned, but other regimes with notable methodologies will also be mentioned.

\vspace{0.6em}
We begin by reviewing the earlier results for the defocusing energy-critical NLS ($\mu=+1$, $b=0$, and $\alpha=\frac{4}{n-2}$).
Bourgain \cite{Bourgain99} addressed the scattering problem for $n=3$ under the radial assumption using induction on energy.
Tao \cite{Tao05} later expanded this result to $n\geq 3$ while simplifying the argument and refining the scattering size estimates.
For non-radial data, Colliander et al. \cite{CollianderKSTT08} tackled $n=3$ using energy induction and localized interaction Morawetz inequalities. Ryckman and Visan \cite{RyckmanVisan07} worked on $n=4$, refining earlier arguments, and Visan \cite{Visan07} extended the analysis to higher dimensions $n\geq 5$ using fractional chain rules and stability theory.

\vspace{0.6em}
Next, we turn to the focusing energy-critical NLS ($\mu = -1$. $b=0$, and $\alpha=\frac{4}{n-2}$).
Kenig and Merle \cite{KenigMerle} solved the scattering problem for
$n=3,4,5$ with radial data, pioneering the application of the concentration-compactness principle to the scattering theory.
For non-radial data, Killip and Visan \cite{KillipVisan10} extended this framework to
$n\geq 5$, and Dodson \cite{Dodson19} recently addressed $n=4$.
The case $n=3$ with non-radial data still needs to be solved, though Cheng, Guo, and Zheng \cite{ChengGuoZheng23} recently studied global weak solutions.

\vspace{0.6em}
The techniques developed for addressing the scattering problem in the energy-critical NLS have greatly influenced the studies of NLS with other critical regularities.
For the defocusing energy-supercritical NLS (\(\mu = +1\), \(b = 0\), and \(\alpha > \frac{4}{n-2}\)), Bulut \cite{Bulut23} demonstrated the scattering theory for \((n,\alpha) = (3,6)\) with radial data under the \(L_t^\infty \dot{H}_x^{7/6}\) boundedness assumption\,\footnote{If we drop the \(L_t^\infty \dot{H}_x^{s_c}\) boundedness assumption, then it is possible for the solution of the energy-supercritical NLS to blow up even with the smooth initial data.
See Merle et al. \cite{MerleRRS22}.}, following ideas from \cite{Tao05}.

In the focusing intercritical NLS (\(\mu = -1\), \(b = 0\), and \(\frac{4}{n} < \alpha < \frac{4}{n-2}\)), Holmer and Roudenko \cite{HolmerRoudenko08} analyzed the case \((n, \alpha) = (3,2)\) for radial data using the concentration-compactness principle, later improved by Duyckaerts, Holmer, and Roudenko \cite{Duyckaerts08} to include non-radial data.
Fang, Xie, and Cazenave \cite{FangXieCazenave11} further completed the scattering theory for all \(n \geq 1\), covering all the intercritical cases. Alternative methods for higher dimensions, such as the virial-Morawetz approach, were explored by Dodson and Murphy \cite{DodsonMurphyRad, DodsonMurphy18} and Dinh \cite{Dinh20u}. The concentration-compactness method has also been applied to other dispersive equations, such as the generalized Korteweg–de Vries equation by Farah et al. \cite{Farah-gKdV}, and the generalized Benjamin-Ono equation by Kim and Kwon \cite{KihyunKim-Kwon-gBO}.

\vspace{0.6em}
Turning to the INLS, we note that scattering theory for this equation has emerged only recently.
Many studies focus on the intercritical case (\(\frac{2(2-b)}{n} < \alpha < \frac{2(2-b)}{n-2}\)), where the singular coefficient \( |x|^{-b} \) is manageable due to the assumption of \(H^1(\mathbb{R}^n)\) initial data. Dinh \cite{Dinh2019} addressed the defocusing case for \(n \geq 4\) and partially for \(n = 3\), building on Visciglia's approach \cite{Visciglia09}.
For the focusing case, Cardoso et al. \cite{CardosoFGM20} handled \(n \geq 2\) and \(0 < b < \min(2, \frac{n}{2})\), extending the works of Farah and Guzmán \cite{FarahGuzman17, FarahGuzman20} and Miao et al. \cite{MiaoMurphyZhang19}.
In addition, Dinh and Keraani \cite{DinhKeraani21} refined the long-time dynamics for \(n \geq 1\) by building the criteria for global existence, scattering (\(n\geq 2\)), and finite time blow-up. Campos and Cardoso \cite{CamposCardoso} presented a virial-Morawetz approach inspired by Dodson and Murphy's work \cite{DodsonMurphyRad} for \(n \geq 3\).

\vspace{0.6em}
There are also studies on energy-critical INLS (\(\alpha = \frac{2(2-b)}{n-2}\)), where controlling the coefficient \( |x|^{-b} \) is more challenging. Cho, Hong, and Lee \cite{ChoHongLee20} resolved the focusing case for \(n = 3\) and \(0 < b < 4/3\) with radial data. Cho and Lee \cite{ChoLee21} extended \cite{ChoHongLee20} to \(n = 3\) and \(4/3 \leq b < 3/2\) using weighted Sobolev spaces, which overcomes the parameter restriction caused by Hardy inequalities within Lebesgue spaces.
Guzmán and Xu \cite{GuzmanXu24.6}, building on Guzmán and Murphy \cite{GuzmanMurphy21}, tackled the scattering theory to \(n=3,4,5\) and \(0 < b \leq \min(\frac{4}{n}, \frac{6-n}{2})\) for non-radial data.


\vspace{0.6em}
In the author's previous work \cite{Park24128202}, the scattering theory of the defocusing energy-critical ($\mu = +1$, $\alpha = 2(2-b)/(n-2)$) {INLS} has been made under the radial data assumption.
It takes the road map of Tao \cite{Tao05} as the main methodology and adopts Lorentz spaces for Strichartz estimates by inspiration from the local well-posedness results of Aloui and Tayachi \cite{AlouiTayachi}.
The result is stated as the following two theorems.

\begin{thm} [{\cite[Theorem 1.1]{Park24128202}}] \label{thrm:prev-work-main}
Let $n\geq 3$ be an integer and $b>0$ obey the following range restriction.
\begin{equation*}
\left\{
\begin{aligned}
&0<b<3/2 && \text{if } n=3, \\
&0<b<1 && \text{if } n=4, \\
&0<b<1/2 \text{\: or \:} 5/4<b<2 && \text{if } n=5, \\
&0<b<2 && \text{if } n\geq 6.
\end{aligned}
\right.
\end{equation*}
Let $[T_-, T_+]$ be a compact interval, and let $u$ be the unique spherically symmetric solution of \eqref{INLS} associated with the initial data $\phi \in H^1(\mathbb{R}^n)$ and lying in the solution space
\begin{equation*}
C_t([T_-, T_+], H_x^1(\mathbb{R}^n))
\cap L_t^{p_*}([T_-, T_+], W_x^{1;q_*,2}(\mathbb{R}^n))
\end{equation*}
where $p_* = 2(n+2)/(n-2+2b)$ and $q_* = 2n(n+2)/(n^2+4-4b)$. Then we have
\begin{equation*}
\norm{L_t^{\frac{2(n+2)}{n-2}}([T_-, T_+], L_x^{\frac{2(n+2)}{n-2}}(\mathbb{R}^n))}{u}
\leq	C\exp(CE^C)
\end{equation*}
where $E = E[\phi]$ is the solution's energy and $C$ are absolute constants depending only on $n$ and $b$.\\
In particular, if $n=3,4,5$ and $0<b<(6-n)/2$, the same estimate holds when we replace the inhomogeneous Sobolev-Lorentz spaces with homogeneous ones:
we can assume $\phi$ to only lie in $\dot{H}^1(\mathbb{R}^n)$,
in which case
the solution only lies in $C_t([T_-, T_+], \dot{H}_x^1(\mathbb{R}^n)) \cap L_t^{p_*}([T_-, T_+], \dot{W}_x^{1;q_*,2}(\mathbb{R}^n))$.
\end{thm}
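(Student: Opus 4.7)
The plan is to follow the induction-on-energy road map that Tao developed in his 2005 treatment of the defocusing energy-critical NLS, with the modifications necessary to accommodate the singular coefficient $|x|^{-b}$. The double-exponential shape of the bound $C\exp(CE^C)$ is precisely what that induction scheme outputs, which reinforces it as the right strategy. The key structural observation is that $|x|^{-b}$ sits naturally in the Lorentz space $L^{n/b,\infty}(\mathbb{R}^n)$, so estimating the nonlinearity via H\"older in Lorentz spaces, coupled with Sobolev-Lorentz embeddings of the solution, avoids the losses one suffers when staying on the Lebesgue scale; this is the role played by the Lorentz-refined Strichartz spaces $L_t^{p_*} W_x^{1;q_*,2}$ appearing in the statement.

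First I would set up the critical Strichartz machinery in these Lorentz-refined spaces and establish the nonlinear estimate bounding $|x|^{-b}|u|^\alpha u$ together with its gradient in a dual Strichartz norm. From this would follow local well-posedness equipped with a stability/perturbation lemma: any solution whose $L_{t,x}^{2(n+2)/(n-2)}$ norm is finite on a time interval automatically belongs to the Strichartz space there, and small Strichartz-norm perturbations of the data produce small perturbations of the solution. This is the engine that powers the induction and that continuously promotes a spacetime bound into Sobolev-level control.

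Next I would define $M(E)$ as the supremum of $\|u\|_{L_{t,x}^{2(n+2)/(n-2)}}$ taken over all radial solutions with energy at most $E$. Strichartz gives $M(E)<\infty$ for small $E$; the target is to show $M$ is finite for every $E$, with the stated double-exponential quantitative bound. Following Tao, I would assume for contradiction that $M$ first blows up at some finite critical energy $E_{crit}$, extract a minimal-energy nearly-blowing-up sequence of radial solutions, and through frequency localization together with Morawetz-type control derive concentration of the solution at a single spacetime point. Because the potential $|x|^{-b}$ breaks spatial translation symmetry but preserves scaling, and because radiality rules out angular concentration, the only remaining free parameters are scale and time-translation, making the concentration analysis considerably cleaner than in the non-radial NLS setting. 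A localized radial Morawetz inequality compatible with the weight $|x|^{-b}$ then yields the desired contradiction.

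The main obstacle I foresee is the nonlinear estimate in Lorentz spaces at the level of one derivative: controlling $\nabla\bigl(|x|^{-b}|u|^\alpha u\bigr)$ in the dual Strichartz space requires a delicate H\"older balance because a derivative hitting the weight produces the more singular factor $|x|^{-b-1}\in L^{n/(b+1),\infty}$. The feasibility of this balance is what ultimately carves out the range restrictions on $b$ in the theorem statement, and the bifurcated admissible range for $n=5$ most plausibly reflects a regime where the natural H\"older configuration becomes infeasible at the endpoints of the Sobolev-Lorentz embeddings. Finally, the homogeneous-space version for $n=3,4,5$ and $0<b<(6-n)/2$ would require replacing every $L_x^2$-level mass estimate with a purely $\dot{H}^1$-level argument; this is available precisely in that regime because the Sobolev-Lorentz embeddings leave enough room to close all the estimates without invoking mass conservation.
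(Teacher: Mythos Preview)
The theorem you are attempting to prove is not proved in the present paper at all: it is quoted verbatim from the author's earlier work \cite{Park24128202} and serves here only as background to motivate the new result (Theorem~\ref{mainthm}). Consequently there is no ``paper's own proof'' in this document to compare against.

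That said, the paper does summarize the methodology of \cite{Park24128202} in the paragraph following Theorem~\ref{cor:prev-work-scattering}: it ``takes the road map of Tao \cite{Tao05} as the main methodology and adopts Lorentz spaces for Strichartz estimates by inspiration from the local well-posedness results of Aloui and Tayachi \cite{AlouiTayachi}.'' Your proposal is precisely this: Tao's induction-on-energy scheme for the radial defocusing energy-critical problem, with the nonlinear estimates run in Lorentz-refined Strichartz spaces so that $|x|^{-b}\in L^{n/b,\infty}$ can be absorbed by H\"older. So your strategy matches what the cited paper is reported to do, and the double-exponential form of the bound is indeed the signature of Tao's argument. Your identification of the derivative-on-the-weight term $|x|^{-b-1}$ as the source of the $b$-range restrictions is also consistent with how such constraints typically arise in this setting.
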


\begin{thm} [{\cite[Theorem 1.2]{Park24128202}}] \label{cor:prev-work-scattering}
Let $n$, $b$, $p_*$, $q_*$ and $u$ be defined as in Theorem \ref{thrm:prev-work-main}. Then the solution $u$ is global and scatters in $\dot{H}_x^1(\mathbb{R}^n)$, in the sense that $u$ in fact belongs to
\begin{equation*}
C_t(\mathbb{R}, \dot{H}_x^1(\mathbb{R}^n))
\cap L_t^{p_*}(\mathbb{R}, \dot{W}_x^{1;q_*,2}(\mathbb{R}^n))
\end{equation*}
and there exist profiles $U_+,U_-\in \dot{H}^1(\mathbb{R}^n)$ such that
\begin{equation*}
\lim_{t\to+\infty} \norm{\dot{H}_x^{1}}{u(t) - e^{it\Delta}U_+}
=\lim_{t\to-\infty} \norm{\dot{H}_x^{1}}{u(t) - e^{it\Delta}U_-}
= 0.
\end{equation*}
\end{thm}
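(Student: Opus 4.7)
The plan is to deduce the scattering statement from the a~priori bound of Theorem \ref{thrm:prev-work-main} in the usual two-step fashion: first promote the compact-interval, energy-dependent bound on $\|u\|_{L_t^{2(n+2)/(n-2)}L_x^{2(n+2)/(n-2)}}$ to a global-in-time bound in the full solution space $L_t^{p_*}\dot{W}_x^{1;q_*,2}$, and then use that bounded nonlinear Duhamel term as a Cauchy criterion for $e^{-it\Delta}u(t)$ in $\dot{H}^1$. Throughout I will rely on the standard $\dot{H}^1$-critical local well-posedness theory in $C_t\dot{H}_x^1 \cap L_t^{p_*}\dot{W}_x^{1;q_*,2}$, together with the Strichartz and Lorentz-space nonlinear estimates developed in \cite{Park24128202, AlouiTayachi}.

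First I would note that Theorem \ref{thrm:prev-work-main} gives a bound independent of the endpoints $T_\pm$, and that local well-posedness in $\dot{H}^1$ produces a maximal lifespan $(T_{\min},T_{\max})\ni t_0$ with the blow-up criterion that $\|u\|_{L_t^{p_*}\dot{W}_x^{1;q_*,2}}$ diverges on approach to any finite endpoint. On any compact subinterval $[T_-,T_+]\subset(T_{\min},T_{\max})$ the uniform estimate of Theorem \ref{thrm:prev-work-main} applies in its homogeneous form (valid when $n=3,4,5$ and $0<b<(6-n)/2$), giving $\|u\|_{L_{t,x}^{2(n+2)/(n-2)}([T_-,T_+])}\le C\exp(CE^C)$. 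Partitioning $[T_-,T_+]$ into $\mathrm{O}(1)$-many subintervals on which this $L_{t,x}$ norm is small and running Strichartz with the Lorentz-refined nonlinear estimate on each piece then upgrades the bound to
\[
\|u\|_{L_t^{p_*}\dot{W}_x^{1;q_*,2}([T_-,T_+])} \le C(E),
\]
with $C(E)$ independent of $T_\pm$. Sending $T_\pm\to T_{\max/\min}$ contradicts the blow-up alternative unless $T_{\max}=+\infty$ and $T_{\min}=-\infty$, giving global existence with a global-in-time bound in the solution space.

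With the global bound in hand, scattering follows from a Cauchy argument. Define $U_+(t):=e^{-it\Delta}u(t)$; by Duhamel,
\[
U_+(t)-U_+(s) = -i\mu\int_s^{t} e^{-i\tau\Delta}\bigl(|x|^{-b}|u|^{\alpha}u\bigr)(\tau)\,d\tau.
\]
Applying $\nabla$ and a dual Strichartz estimate, then the Lorentz-valued nonlinear estimate used to build the local theory, bounds $\|U_+(t)-U_+(s)\|_{\dot{H}^1}$ by a polynomial in $\|u\|_{L_t^{p_*}\dot{W}_x^{1;q_*,2}([s,t])}$ times $\|u\|_{L_t^{p_*}L_x^{q_*,2}([s,t])}^{\alpha}$ (or a similar combination). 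Since the global norm is finite, the restriction to $[s,t]$ tends to $0$ as $s,t\to+\infty$, so $U_+(t)$ is $\dot{H}^1$-Cauchy and converges to some $U_+\in\dot{H}^1$; the limit at $-\infty$ is handled symmetrically.

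The main obstacle is the passage from the scalar $L_{t,x}^{2(n+2)/(n-2)}$ bound supplied by Theorem \ref{thrm:prev-work-main} to a derivative bound in the Lorentz-refined Strichartz space $L_t^{p_*}\dot{W}_x^{1;q_*,2}$: the singular factor $|x|^{-b}$ must be absorbed via Hölder in Lorentz spaces and Hardy-type inequalities on a scale where the resulting constant depends only on the energy, and the partition size in the bootstrap must be chosen so that each piece closes under Strichartz. This is precisely where the $(n,b)$-range restriction $0<b<(6-n)/2$ for $n=3,4,5$ enters, coming from the homogeneous version of Theorem \ref{thrm:prev-work-main} and the corresponding Hardy inequalities on homogeneous Sobolev-Lorentz spaces; the rest of the argument is routine once the nonlinear Duhamel estimate closes.
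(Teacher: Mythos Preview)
The paper does not actually prove this statement: Theorem~\ref{cor:prev-work-scattering} is quoted verbatim from the author's earlier work \cite[Theorem~1.2]{Park24128202} as background, and no argument is given here. So there is no ``paper's own proof'' to compare against.

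That said, your outline is the standard deduction of scattering from the a~priori spacetime bound of Theorem~\ref{thrm:prev-work-main}, and it is essentially correct: uniform $L_{t,x}^{2(n+2)/(n-2)}$ control independent of $T_\pm$, a small-piece bootstrap to upgrade this to the full $L_t^{p_*}\dot W_x^{1;q_*,2}$ norm, the blow-up alternative to force $T_{\max}=+\infty$, and finally the Duhamel/Cauchy argument for $e^{-it\Delta}u(t)$ in $\dot H^1$. This is precisely the mechanism behind \cite[Theorem~1.2]{Park24128202}. One minor point: you restrict yourself to the homogeneous range $n=3,4,5$, $0<b<(6-n)/2$, whereas the statement of Theorem~\ref{cor:prev-work-scattering} covers the full $(n,b)$ range of Theorem~\ref{thrm:prev-work-main}; in the inhomogeneous case $\phi\in H^1$ one runs the identical argument with the $W^{1;q_*,2}$ spaces in place of $\dot W^{1;q_*,2}$ and still concludes $\dot H^1$-scattering (indeed $H^1$-scattering), so nothing changes structurally.
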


\cite{Park24128202} has the significance of having contributed to the large-data global well-posedness theory in the energy-critical regime of \eqref{INLS} for higher dimensions.
It also provides a quantitative bound on the scattering size of the solution,
an interesting property that also appears in \cite{Bourgain99}, \cite{Tao05}, and \cite{CollianderKSTT08}.

\vspace{0.6em}
On the other hand, \cite{Park24128202} also possesses some weak points.
We can notice that the method of \cite{Tao05} is not designed to apply to the non-radial data and the focusing case.
In addition, the analysis with the low nonlinearity power $\alpha < 1$ is much harder and sometimes even misses the result since the derivative of the nonlinearity is not Lipschitz. It often negatively impacts the proof of uniqueness in the local theory, especially when we consider critical spaces only.

\vspace{0.6em}
In this paper, we establish the global well-posedness and the scattering of the \emph{focusing energy-critical} ($\mu = -1$, $\alpha = 2(2-b)/(n-2)$) \emph{INLS} for every $n \geq 3$ and $0 < b < \min(2, n/2)$ without the radial data assumption.
We develop a stability theory and then derive an improved local well-posedness theory through the arguments of Killip and Visan \cite[Section 3]{KillipVisan13} and the use of Lorentz spaces.
The global well-posedness and the sub-threshold scattering can then be shown after understanding the concentration compactness arguments.
The new technique of Guzman and Murphy \cite{GuzmanMurphy21} specialized for INLS prevents the solution from escaping too far from the origin, which provides the compactness property for the minimal energy blow-up solution and essentially reduces the enemy scenarios down to the soliton.
The scattering in the defocusing energy-critical regime can be proved similarly after modifying the variational analysis, in which case the size threshold is removed, yet the proof is omitted.

\vspace{0.6em}
Before the statement of the main results,
we introduce the ground state $Q(x)$ for the focusing energy-critical case
as already stated in \cite{GuzmanXu24.6}.
It is a positive, radially decreasing, vanishing-at-infinity $C^\infty(\mathbb{R}^n \setminus \{0\})$ solution to the following elliptic equation, unique up to scaling.
See Yanagida \cite[Remark 2.1]{Yanagida} for details.
\[
Q(x) = \left(1 + \frac{|x|^{2-b}}{(n-2)(n-b)} \right) ^ {-\frac{n-2}{2-b}} \text{,\quad}
\Delta Q + |x|^{-b} Q^{\alpha + 1} = 0
\]

\vspace{0.6em}
The main result for the \emph{focusing} energy-critical INLS is as follows.
For the defocusing energy-critical INLS, we have only to remove the sub-threshold size assumption \eqref{mainthm-threshold} to obtain the full result.

\begin{thm} \label{mainthm} ~

Let $n\geq 3$ be an integer and $0<b<\min(2,n/2)$.
Then for every initial data $\phi \in \dot{H}^1(\mathbb{R}^n)$ such that
\begin{equation}
\label{mainthm-threshold}
E[\phi] < E[Q]  \text{\quad and\quad}  \norm{L_x^2}{\nabla \phi} < \norm{L_x^2}{\nabla Q},
\end{equation}
the solution $u \in C_t(\mathbb{R}, \dot{H}_x^1(\mathbb{R}^n))
\cap L_t^{2}(\mathbb{R}, \dot{W}_x^{1; \frac{2n}{n-2},2}(\mathbb{R}^n))$ of \eqref{INLS} exists globally in time, is unique, and scatters in $\dot{H}_x^1(\mathbb{R}^n)$.
In particular, the solution $u$ has finite scattering size
\begin{equation*}
\int_{\mathbb{R}} \int_{\mathbb{R}^n} |u|^{\frac{2(n+2)}{n-2}} dx\, dt \leq C(E[\phi]) < \infty
\end{equation*}
where $C(E[\phi])$ is a constant depending only on $n$, $b$, and $E[\phi]$, and continuous as a function of $E[\phi]$.
\end{thm}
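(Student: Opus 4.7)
The plan is to follow the Kenig--Merle concentration-compactness / rigidity road map in the inhomogeneous setting of Guzm\'an--Murphy, with the stability theory executed in Lorentz-refined Strichartz spaces \`a la Killip--Visan so that the singular weight $|x|^{-b}$ (which lies in weak-$L^{n/b}$ but not in $L^{n/b}$) can be absorbed even when $\alpha = (4-2b)/(n-2) < 1$. The first ingredient is a variational/coercivity analysis: using the sharp (weighted) Sobolev embedding attained by $Q$ together with the sub-threshold hypothesis \eqref{mainthm-threshold}, I would show that $\norm{L^2_x}{\nabla u(t)} \le (1-\delta)\norm{L^2_x}{\nabla Q}$ for all $t$ in the maximal lifespan, and that the energy is comparable to the kinetic energy. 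This trapping makes the focusing problem behave, on the set \eqref{mainthm-threshold}, like a defocusing one and provides the positive-definite quadratic form that the rigidity step will need.

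Next I would build a satisfactory local theory. Using Strichartz estimates valued in $L^{p}_t \dot{W}^{1;q,r}_x$ with Lorentz index $r = 2$ at the admissible pair $(p,q) = (2, 2n/(n-2))$, H\"older in Lorentz spaces puts $|x|^{-b}|u|^\alpha u$ back into the dual space with a genuine contraction for small data and a strong stability (long-time perturbation) statement for not-necessarily-small data with bounded scattering norm. Combined with Step 1 this yields small-data scattering, the blow-up alternative in terms of $L^{2(n+2)/(n-2)}_{t,x}$, and a stability lemma robust enough to turn a linear profile decomposition into a nonlinear one.

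I would then set $E_c := \sup\{E_0 : \text{every }\phi\text{ with }E[\phi] < E_0\text{ satisfying }\eqref{mainthm-threshold}\text{ scatters}\}$ and assume for contradiction $E_c < E[Q]$. A $\dot{H}^1$ profile decomposition adapted to the symmetries compatible with \eqref{INLS} (scaling, time translation, and, for profiles escaping the origin, spatial translation) is applied to a minimizing sequence. Profiles whose translation parameters $x_n$ satisfy $|x_n|\lambda_n \to \infty$ effectively solve the translation-invariant energy-critical NLS, whose scattering (Kenig--Merle, Killip--Visan, Dodson, etc.) is known below the corresponding ground-state threshold; by the Pythagorean expansions of mass, kinetic energy, and energy, only one non-scattering profile can survive, and this profile stays near the origin. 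Feeding this back through the nonlinear profile decomposition and the stability lemma yields a critical element $u_c$ of lifespan $I_\ast$ whose orbit $\{u_c(t)\}$ is precompact in $\dot{H}^1$ modulo a scale $\lambda(t)$ and a translation $x(t)$.

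The main obstacle, and where the Guzm\'an--Murphy idea is essential, is the rigidity step: because translation is not a symmetry of \eqref{INLS}, one has to show that the translation parameter cannot run to infinity, i.e.\ that $|x(t)|\lambda(t)$ remains bounded; otherwise the critical element would decouple from the weight and reduce to a free NLS soliton, contradicting minimality. Once this spatial localization near the origin is established, I would run a truncated virial / localized Morawetz argument with a radial cutoff, for which the coercivity from Step 1 yields a strictly positive lower bound on the virial derivative while the boundary terms are controlled by the compactness of the orbit. Integrating in time and comparing with the uniform bound on the truncated virial forces $u_c \equiv 0$, contradicting the non-scattering of $u_c$ and hence $E_c \ge E[Q]$, which is the claim of Theorem \ref{mainthm}; the quantitative bound on $\int\!\!\int|u|^{2(n+2)/(n-2)}\,dx\,dt$ then follows from the stability lemma by a standard subdivision-and-iteration argument.
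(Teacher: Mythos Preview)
Your overall road map is right, but there is a genuine gap in how you handle profiles escaping to infinity. You propose that a profile with $|x_m^j|/\lambda_m^j \to \infty$ ``effectively solves the translation-invariant energy-critical NLS'' and then invoke the scattering theory for the focusing energy-critical NLS as a black box. This fails at $n=3$: scattering for the focusing energy-critical NLS with non-radial data in dimension three is still open (only $n=4$ by Dodson and $n\ge 5$ by Killip--Visan are available), so your argument would not cover the full range $n\ge 3$ that the theorem claims. Even for $n\ge 4$ there is a threshold mismatch you do not address: an escaping profile has INLS energy below $E[Q]$, but to quote NLS scattering you would need its NLS energy below $E[W]$, which is a separate comparison. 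The paper avoids all of this. The Guzm\'an--Murphy device (Proposition~\ref{scattering-step-2}) does \emph{not} pass to NLS; it approximates an escaping profile directly by the \emph{linear} flow with frequency and spatial cutoffs, and then uses the decay $|x|^{-b}\to 0$ at infinity to show that the INLS nonlinearity itself is negligible on such data. Thus escaping profiles scatter for INLS without any appeal to NLS large-data theory, and this is precisely what makes the non-radial $n=3$ case accessible.

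Relatedly, you place the Guzm\'an--Murphy idea in the rigidity step, tracking a translation parameter $x(t)$ for the critical element and arguing that $|x(t)|/\lambda(t)$ stays bounded. In the paper this is already disposed of during the Palais--Smale step: once escaping profiles are shown to scatter, the critical element is extracted with precompact orbit modulo scaling \emph{only} (Proposition~\ref{compactness-of-orbit}), with no $x(t)$ at all. The rigidity then splits cleanly into (i) a finite-time-blowup scenario, excluded by a reduced-Duhamel/mass argument (Proposition~\ref{no-finitelife}), and (ii) an infinite-lifespan soliton scenario, excluded by the localized virial you describe (Proposition~\ref{no-soliton}). Your outline collapses (i) into the virial step, which does not work as stated since the virial argument requires $T_*^+=\infty$ to send $T\to\infty$.
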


The author would like to notice that a similar result to Theorem \ref{mainthm} has been obtained in an independent research by Liu and Zhang \cite{LiuZhang24-INLS}.
Theorem \ref{mainthm} is a generalization to several pre-existing results regarding the scattering theory of the energy-critical INLS, which include \cite{ChoHongLee20}, \cite{ChoLee21}, \cite{GuzmanMurphy21}, \cite{GuzmanXu24.6}, and \cite{Park24128202}.
In particular, we eliminate the radial data assumption and establishes a unified approach for the scattering theory on the range $n\geq 3$ and $0<b<\min(2,n/2)$.



\subsection*{Acknowledgement}

The author would like to express the deepest gratitude to his advisor Prof. Soonsik Kwon for his valuable advice and encouragement, which have greatly inspired this work.
The author would also like to appreciate Dr. Kiyeon Lee for several informative discussions.
This work has been partially supported by the National Research Foundation of
Korea (NRF-2019R1A5A1028324 and NRF-2022R1A2C1091499).

\vspace{2em}
\section{Preliminaries} \label{Section:2}

In this section, we prepare several necessary ingredients before diving into the concentration-compactness method.
We introduce Lorentz space with its variants and some basic properties and then use the fractional chain rules to establish the local well-posedness and the stability theory of INLS in the energy-critical regime.
This encompasses both the focusing and defocusing cases.
The cases of high ($\alpha \geq 1$) and low ($\alpha \leq 1$) nonlinearity powers are handled separately according to the argument of Killip and Visan \cite[Section 3]{KillipVisan13}.
In the last subsection, we write down the variational analysis in the focusing case.

\subsection{Function spaces} \label{Subsection:2.1}

In this subsection, we review the definition and properties of Lorentz spaces and their variations, as have been collected in Aloui and Tayachi \cite{AlouiTayachi} and in the author's previous work \cite{Park24128202}.
Again, we assume that all the functions appearing here are at least the tempered distributions to ensure that all the differential operators make sense.

\vspace{0.6em}
\begin{defn}
Let $1<p<\infty, ~1\leq q\leq\infty$. A function $f:\mathbb{R}^n \to \mathbb{C}$ is said to be in the \emph{Lorentz space} $L^{p,q}(\mathbb{R}^n)$ if its \emph{Lorentz quasinorm}
\begin{equation*}
\norm{L^{p,q}(\mathbb{R}^n)}{f}
:=	\left(\int_0^\infty \left( t^{-1/p}f^*(t) \right)^{q} \frac{dt}{t} \right)^{1/q}
\end{equation*}
is finite, where $f^*$ is the decreasing rearrangement of $f$.
\end{defn}

\vspace{0em}
\begin{defn}
Let $\gamma\in\mathbb{R}, ~1<p<\infty, ~1\leq q\leq\infty$. A function $f:\mathbb{R}^n \to \mathbb{C}$ is said to be in the \emph{homogeneous Sobolev-Lorentz space} $\dot{W}^{\gamma;p,q}(\mathbb{R}^n)$ if
\begin{equation*}
\norm{\dot{W}^{\gamma;p,q}(\mathbb{R}^n)}{f}
:=	\big\Vert |\nabla|^\gamma f \big\Vert_{L^{p,q}(\mathbb{R}^n)} < \infty,
\end{equation*}
or in the \emph{inhomogeneous Sobolev-Lorentz space} $W^{\gamma;p,q}(\mathbb{R}^n)$ if
\begin{equation*}
\norm{W^{\gamma;p,q}(\mathbb{R}^n)}{f}
:=	\big\Vert \langle\nabla\rangle^\gamma f \big\Vert_{L^{p,q}(\mathbb{R}^n)} < \infty
\end{equation*}
where $\langle x \rangle := (1+|x|^2)^{1/2}$ is the Japanese bracket.
\end{defn}

\vspace{0em}
\begin{defn}
Let $\gamma\in\mathbb{R}$, $2\leq p\leq\infty$, and $2\leq q<\infty$. A pair $(p,q)$ is said to be $\dot{H}^\gamma$-\emph{admissible} if $\dfrac{2}{p} + \dfrac{n}{q} = \dfrac{n}{2} - \gamma$,
and the \emph{Strichartz space} $\dot{S}^\gamma(I)$ refers to the set of all functions $u : I \times \mathbb{R}^n \to \mathbb{C}$ whose \emph{Strichartz norms} defined as below are finite. (Usual modification if $p = \infty$.)
\begin{align*}
\left\Vert u \right\Vert_{\dot{S}^{\gamma}(I)}
&=	\sup_{\text{$L^2$-admissible }(p,q)} \left\Vert u \right\Vert_{L_t^p(I,\dot{W}_x^{\gamma;q,2}(\mathbb{R}^n))}
=	\sup_{\text{$L^2$-admissible }(p,q)} \left(\int_I \big\Vert |\nabla|^\gamma u(t) \big\Vert_{L_x^{q,2}(\mathbb{R}^n)}^p \,dt \right)^{1/p}
\end{align*}
For this paper, we also define the \emph{dual Strichartz space} $\dot{N}^\gamma(I)$ as the set of all functions $u : I \times \mathbb{R}^n \to \mathbb{C}$ whose \emph{dual Strichartz norms} defined as below are finite, where $p' = p/(p-1)$ and $q' = q/(q-1)$. Notice that $\dot{N}^0(I)$ serves as the dual space of $\dot{S}^0(I)$.
\begin{align*}
\left\Vert u \right\Vert_{\dot{N}^{\gamma}(I)}
&=	\inf_{\text{$L^2$-admissible }(p,q)} \left\Vert u \right\Vert_{L_t^{p'}(I,\dot{W}_x^{\gamma;q',2}(\mathbb{R}^n))}
=	\inf_{\text{$L^2$-admissible }(p,q)} \left(\int_I \big\Vert |\nabla|^\gamma u(t) \big\Vert_{L_x^{q',2}(\mathbb{R}^n)}^{p'} \,dt \right)^{1/{p'}}
\end{align*}
\end{defn}

\vspace{0.6em}
Lorentz spaces share several useful norm inequalities with Lebesgue spaces, except for possibly worse proportionality constants. We list a couple of them here without proof.

\vspace{0.6em}
\begin{prop}[H\"{o}lder's inequality; {\cite[Proposition 2.3]{Lemarie02}, \cite[Theorem 3.4]{ONeil63}}]
Let $(X,\mu)$ be a $\sigma$-finite measure space, $1<p_1,p_2<\infty$, and $1\leq s_1,s_2\leq \infty$ such that
\begin{equation*}
1<p=\dfrac{1}{1/p_1+1/p_2}<\infty, \quad 1\leq s=\dfrac{1}{1/s_1+1/s_2} \leq\infty.
\end{equation*}
Then for every $f \in L^{p_1,s_1}(X)$ and $g \in L^{p_2,s_2}(X)$, we have
\begin{equation*}
\norm{L^{p,s}(X)}{fg}
\leq C \norm{L^{p_1,s_1}(X)}{f} \norm{L^{p_2,s_2}(X)}{g}
\end{equation*}
where $C$ is an absolute constant depending only on $p_1$, $p_2$, $s_1$, and $s_2$.
\end{prop}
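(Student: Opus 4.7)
The plan is to reduce the inequality to a one-dimensional statement on $(0,\infty)$ via the decreasing rearrangement, and then apply the classical H\"older inequality on a weighted Lebesgue space. This follows the original strategy of O'Neil.

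First, I would establish the pointwise rearrangement bound
\begin{equation*}
(fg)^{*}(t) \leq f^{*}(t/2)\, g^{*}(t/2), \qquad t>0.
\end{equation*}
This comes from the set inclusion $\{|fg|>\lambda\mu\}\subset\{|f|>\lambda\}\cup\{|g|>\mu\}$ applied with $\lambda=f^{*}(t/2)$ and $\mu=g^{*}(t/2)$: both sets on the right have measure at most $t/2$, so $\mu(\{|fg|>f^{*}(t/2)g^{*}(t/2)\})\leq t$, and the bound follows from the definition of the decreasing rearrangement. The $\sigma$-finiteness of $(X,\mu)$ is used to ensure that $f^{*}$ is well defined via the distribution function.

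Second, using $1/p=1/p_1+1/p_2$ I would factor $t^{1/p}=t^{1/p_1}\cdot t^{1/p_2}$ inside the defining integral for $\norm{L^{p,s}(X)}{fg}$ and apply the classical H\"older inequality on the multiplicative-measure space $\bigl((0,\infty),\,dt/t\bigr)$ with conjugate exponents $s_1/s$ and $s_2/s$; these are legitimate by the hypothesis $1/s=1/s_1+1/s_2$ together with $s\geq 1$. A change of variables $u=t/2$ in each resulting factor produces only a multiplicative constant of the form $2^{1/p}$ and yields
\begin{equation*}
\norm{L^{p,s}(X)}{fg} \leq 2^{1/p}\, \norm{L^{p_1,s_1}(X)}{f}\, \norm{L^{p_2,s_2}(X)}{g}.
\end{equation*}
The degenerate cases in which $s$ or some $s_i$ equals $\infty$ are handled identically, with the relevant integrals replaced by suprema; for instance, when $s_1=s_2=\infty$ one bounds $f^{*}(t/2)g^{*}(t/2)$ directly by $(t/2)^{-1/p_1}\norm{L^{p_1,\infty}(X)}{f}\cdot (t/2)^{-1/p_2}\norm{L^{p_2,\infty}(X)}{g}$ and takes the supremum in $t$.

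I do not anticipate a genuine obstacle here: the statement is a classical inequality whose proof is completely elementary once the rearrangement bound above is in hand. The only mildly delicate point is that passing through $f^{*}$ rather than the averaged rearrangement $f^{**}(t)=\tfrac{1}{t}\int_{0}^{t}f^{*}(s)\,ds$ costs the constant $2^{1/p}$; the sharper triangle-inequality version would instead use Hardy's inequality to pass between $f^{*}$ and $f^{**}$ (valid since $p>1$), but because the proposition only asks for a bound with \emph{some} constant $C=C(p_1,p_2,s_1,s_2)$, the straightforward route via $(fg)^{*}(t)\leq f^{*}(t/2)g^{*}(t/2)$ suffices.
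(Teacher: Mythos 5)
Your proof is correct and is, in substance, the argument of O'Neil that the paper cites in lieu of giving a proof. The two ingredients you isolate—the rearrangement submultiplicativity $(fg)^{*}(t_1+t_2)\le f^{*}(t_1)\,g^{*}(t_2)$ specialized to $t_1=t_2=t/2$, followed by one-dimensional H\"older on $\bigl((0,\infty),dt/t\bigr)$ with exponents $s_1/s$, $s_2/s$ and the change of variables $u=t/2$—are exactly what the cited references use, and your accounting of the resulting constant $2^{1/p}$ is accurate. (One cosmetic note unrelated to your proof: the paper's displayed definition of the Lorentz quasinorm has $t^{-1/p}f^{*}(t)$ where the standard convention, which your factorization $t^{1/p}=t^{1/p_1}t^{1/p_2}$ tacitly uses, is $t^{1/p}f^{*}(t)$.)
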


\vspace{0em}
\begin{prop}[Sobolev embedding; {\cite[Theorem 2.4]{Lemarie02}}]
Let $0<\alpha<n$, $1<p<\tilde{p}<\infty$, and $1\leq s \leq\infty$ such that $-1/p + 1/\tilde{p} + \alpha/n = 0$. Then for every $f \in \dot{W}^{\alpha;p,s}(\mathbb{R}^n)$, we have
\begin{equation*}
\norm{L^{\tilde{p},s}(\mathbb{R}^n)}{f}
\leq C \norm{\dot{W}^{\alpha;p,s}(\mathbb{R}^n)}{f}
\end{equation*}
where $C$ is an absolute constant depending only on $p$, $s$, $n$, and $\alpha$.
\end{prop}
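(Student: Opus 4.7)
The plan is to reduce the embedding to an O'Neil-type Young convolution inequality in Lorentz spaces, via the Riesz potential representation. Setting $g := |\nabla|^{\alpha} f$, so that $\norm{L^{p,s}(\mathbb{R}^n)}{g} = \norm{\dot{W}^{\alpha;p,s}(\mathbb{R}^n)}{f}$, I would represent
\begin{equation*}
f(x) = c_{n,\alpha} \int_{\mathbb{R}^n} |x-y|^{-(n-\alpha)}\, g(y)\, dy,
\end{equation*}
which holds for $0 < \alpha < n$ with an explicit positive constant $c_{n,\alpha}$; this is the standard realization of $|\nabla|^{-\alpha}$ as the Riesz potential $I_\alpha$.

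Next, I would note that the kernel $K(x) := |x|^{-(n-\alpha)}$ lies exactly in the weak Lebesgue space $L^{n/(n-\alpha),\infty}(\mathbb{R}^n)$, since computing its decreasing rearrangement yields $K^*(t) = c\, t^{-(n-\alpha)/n}$. The embedding then follows from the Young-type convolution inequality in Lorentz spaces due to O'Neil,
\begin{equation*}
\norm{L^{r,s}(\mathbb{R}^n)}{h_1 * h_2}
\leq C\, \norm{L^{p_1,s}(\mathbb{R}^n)}{h_1}\, \norm{L^{p_2,\infty}(\mathbb{R}^n)}{h_2},
\qquad 1 + \tfrac{1}{r} = \tfrac{1}{p_1} + \tfrac{1}{p_2},
\end{equation*}
applied with $p_1 = p$, $p_2 = n/(n-\alpha)$, $h_1 = g$, and $h_2 = K$. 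The exponent arithmetic gives $1/r = 1/p - \alpha/n = 1/\tilde{p}$ by hypothesis, and one arrives at $\norm{L^{\tilde{p},s}(\mathbb{R}^n)}{f} \leq C\, \norm{L^{p,s}(\mathbb{R}^n)}{g}$, which is the claim.

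The main obstacle is the Lorentz-space Young inequality itself. The cleanest route passes through O'Neil's rearrangement bound
\begin{equation*}
(h_1 * h_2)^{**}(t) \leq t\, h_1^{**}(t)\, h_2^{**}(t) + \int_t^{\infty} h_1^*(u)\, h_2^*(u)\, du,
\end{equation*}
followed by a weighted $L^s(\tfrac{dt}{t})$ Hardy inequality to pass from the $f^{**}$ bound back to the Lorentz quasinorms. Because the hypothesis forces $1 < p < \tilde{p} < \infty$, all exponents stay in the reflexive range where these Hardy inequalities hold uniformly over $1 \leq s \leq \infty$, so no endpoint modifications are needed; this is also precisely why the statement excludes $p = 1$ and $\tilde{p} = \infty$.
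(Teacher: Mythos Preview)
The paper does not supply its own proof of this proposition; it is stated with a bare citation to \cite[Theorem 2.4]{Lemarie02} and no argument. Your route via the Riesz potential representation $f = c_{n,\alpha}\,|x|^{-(n-\alpha)} * g$ together with O'Neil's Lorentz-space Young inequality is correct and is in fact the standard proof (and essentially the one given in the cited reference), so there is nothing to compare.
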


\vspace{0em}
\begin{prop}[Equivalent characterization of Sobolev-Lorentz spaces]~

Let $\gamma$ be a positive integer, $1<p<\infty$, and $1\leq q < \infty$. Then for every Schwartz function $f$ on $\mathbb{R}^n$,
\begin{gather*}
c { \norm{\dot{W}^{\gamma;p,q}(\mathbb{R}^n)}{f} } \leq { \norm{L^{p,q}(\mathbb{R}^n)}{\nabla^\gamma f} } \leq C { \norm{\dot{W}^{\gamma;p,q}(\mathbb{R}^n)}{f} }, \\
c { \norm{W^{\gamma;p,q}(\mathbb{R}^n)}{f} } \leq { \norm{L^{p,q}(\mathbb{R}^n)}{f} + \norm{L^{p,q}(\mathbb{R}^n)}{\nabla^\gamma f} } \leq C { \norm{W^{\gamma;p,q}(\mathbb{R}^n)}{f} }
\end{gather*}
where the constants $0 < c < C < \infty$ depend only on $\gamma, p, q, n$.
\end{prop}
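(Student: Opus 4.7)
The approach is to reduce both estimates to the boundedness of Fourier multipliers on Lorentz spaces. The main tool is the fact that any classical Mikhlin--H\"ormander multiplier is bounded on $L^p(\mathbb{R}^n)$ for every $1<p<\infty$, hence on $L^{p,q}(\mathbb{R}^n)$ for $1<p<\infty$ and $1 \leq q \leq \infty$ by the Marcinkiewicz real interpolation theorem applied between two Lebesgue endpoints $p_0 < p < p_1$ chosen inside $(1,\infty)$. In particular each Riesz transform $R_j := \partial_j |\nabla|^{-1}$, whose symbol $-i\xi_j/|\xi|$ is Mikhlin, acts boundedly on every such $L^{p,q}$; the same is true of any finite product $R^\beta := R_1^{\beta_1}\cdots R_n^{\beta_n}$.

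For the homogeneous estimate I would use the symbol identity $(i\xi)^\beta = (-1)^{|\beta|}(-i\xi/|\xi|)^\beta |\xi|^{|\beta|}$, valid for each multi-index $\beta$ with $|\beta| = \gamma$, which amounts to the operator identity $\partial^\beta = c_\beta R^\beta |\nabla|^\gamma$. Summing over $|\beta|=\gamma$ and invoking the Riesz bound yields $\norm{L^{p,q}(\mathbb{R}^n)}{\nabla^\gamma f} \lesssim \norm{L^{p,q}(\mathbb{R}^n)}{|\nabla|^\gamma f}$. For the opposite direction I would split on the parity of $\gamma$: when $\gamma=2k$ is even, the identity $|\nabla|^\gamma = (-\Delta)^k$ exhibits $|\nabla|^\gamma$ as a genuine $\gamma$-th order differential operator and the bound is immediate; when $\gamma=2k+1$ is odd, the symbol identity $|\xi| = \sum_j \xi_j^2/|\xi|$ translates into $|\nabla| = \sum_j R_j \partial_j$, so $|\nabla|^\gamma = \sum_j R_j \partial_j (-\Delta)^k$ and the Riesz bound again closes the argument.

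For the inhomogeneous estimate I would compare $\langle \nabla \rangle^\gamma$ with $1 + |\nabla|^\gamma$ through the multipliers $m_1(\xi) = \langle \xi \rangle^\gamma/(1 + |\xi|^\gamma)$ and $m_2(\xi) = (1 + |\xi|^\gamma)/\langle \xi \rangle^\gamma$. Both are bounded positive functions, smooth away from the origin with the derivative decay needed for the Mikhlin condition at infinity; at the origin, when $\gamma$ is odd, a smooth cut-off $\chi$ supported near $0$ would separate a low-frequency piece (bounded and compactly supported, with an integrable inverse Fourier transform, hence bounded on every $L^{p,q}$ by Young's inequality) from a high-frequency piece (which is Mikhlin). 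The resulting $L^{p,q}$-bound for both multipliers, together with the homogeneous equivalence, delivers the two-sided inhomogeneous estimate after noting that $\norm{L^{p,q}(\mathbb{R}^n)}{(1+|\nabla|^\gamma)f}$ is comparable to $\norm{L^{p,q}(\mathbb{R}^n)}{f} + \norm{L^{p,q}(\mathbb{R}^n)}{\nabla^\gamma f}$. The principal technical obstacle is precisely the verification of the Mikhlin condition for $m_1,m_2$ together with the low-frequency splitting for odd $\gamma$; once this is in place, every remaining step is just an operator identity at the symbol level combined with the real-interpolated $L^{p,q}$ boundedness of Mikhlin multipliers.
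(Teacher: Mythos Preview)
The paper does not prove this proposition; it is listed among several Lorentz-space inequalities quoted from the literature without argument. Your approach is correct and standard: reduce to the $L^{p,q}$-boundedness of Mikhlin multipliers via real interpolation, use Riesz-transform identities for the homogeneous comparison, and handle the inhomogeneous case through the ratios $m_1, m_2$.

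One small caution: your claim that the low-frequency piece $\chi m_j$ has integrable inverse Fourier transform is not automatic. For odd $\gamma$ the symbol $m_j$ is only $C^{\gamma-1,1}$ at the origin, which yields decay of order $\langle x \rangle^{-\gamma}$ for the kernel and hence $L^1$ membership only when $\gamma > n$. Fortunately the splitting is unnecessary: near the origin $m_1(\xi) - 1$ behaves like $-|\xi|^\gamma$ plus smoother terms, so $|\partial^\alpha m_1(\xi)| \lesssim |\xi|^{\gamma - |\alpha|} \leq |\xi|^{-|\alpha|}$ for $|\xi| \leq 1$, and the Mikhlin condition already holds on all of $\mathbb{R}^n \setminus \{0\}$ (similarly for $m_2$). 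With that observation your argument goes through cleanly.
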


\vspace{0em}
\begin{prop}[Strichartz estimates; {\cite{KeelTao}}]~ \label{strichartz}
\begin{itemize}
\item[(i)]
Let $2<p<\infty$. Then for every $f \in L^{p'}(\mathbb{R}^n)$ and $t\ne 0$, we have the dispersive estimate
\begin{equation*}
\norm{L_x^{p,2}(\mathbb{R}^n)}{e^{it\Delta}f}
\leq C |t|^{-n(\frac{1}{2} - \frac{1}{p})} \norm{L^{p',2}(\mathbb{R}^n)}{f}
\end{equation*}
where $1/p' + 1/p = 1$.
\item[(ii)]
Let $(p,q) \in [2,\infty]\times[2,\infty)$ be an $L^2$-admissible pair. Then for every $f \in L^{2}(\mathbb{R}^n)$, we have
\begin{equation*}
\norm{L_t^p(\mathbb{R}, L_x^{q,2}(\mathbb{R}^n))}{e^{it\Delta}f}
\leq C \norm{L^{2}(\mathbb{R}^n)}{f}
\end{equation*}
where $C$ is an absolute constant depending only on $p$, $q$, and $n$.
If $(\tilde{p},\tilde{q})$ is another $L^2$-admissible pair and $F \in L_t^{p'}(\mathbb{R}, L_x^{q',2}(\mathbb{R}^n))$, then we also have
\begin{equation*}
\norm{L_t^{\tilde{p}}(\mathbb{R}, L_x^{\tilde{q},2}(\mathbb{R}^n))}{\int_{0}^{t} e^{i(t-s)\Delta}F(s) ds}
\leq C \norm{L_t^{p'}(\mathbb{R}, L_x^{q',2}(\mathbb{R}^n))}{F}
\end{equation*}
where $C$ is an absolute constant depending only on $p$, $\tilde{p}$, $q$, $\tilde{q}$, and $n$.
\end{itemize}
\end{prop}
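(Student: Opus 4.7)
The plan is to reduce both parts to two foundational ingredients: the pointwise dispersive decay of the free Schr\"odinger propagator on Lebesgue spaces, and the abstract $TT^*$ machinery of Keel and Tao.

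For part (i), I would start from the explicit Gaussian-kernel representation of $e^{it\Delta}$, which yields simultaneously the Lebesgue-type dispersive bound
\begin{equation*}
\|e^{it\Delta}f\|_{L^\infty_x} \leq C|t|^{-n/2}\|f\|_{L^1_x}
\end{equation*}
and the $L^2$-unitarity $\|e^{it\Delta}f\|_{L^2_x} = \|f\|_{L^2_x}$. Real interpolation of the $K$-method style then lifts both endpoints: the couple $(L^1,L^2)$ at parameter $\theta = 2/p$ realises the input space $L^{p',2}$, while $(L^\infty,L^2)$ at the same parameter realises the output space $L^{p,2}$. Combining the two, the interpolated operator norm is controlled by $1^{1-\theta}(C|t|^{-n/2})^{\theta} \sim |t|^{-n(1/2-1/p)}$, which is precisely the claim.

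For part (ii), I would invoke the Keel--Tao framework. Their abstract theorem takes as hypothesis precisely the untruncated decay from part (i) together with the energy estimate, and deduces the Strichartz bounds by dyadic decomposition in time followed by bilinear interpolation. A notable feature of this route is that the bilinear argument naturally outputs spatial Lorentz norms $L^{q,2}_x$; the ordinary Lebesgue bound in $L^q_x$ is merely a corollary obtained by enlarging the second index. The inhomogeneous estimate is handled by the same dyadic decomposition, supplemented away from the double endpoint by the Christ--Kiselev lemma to remove the retarded time truncation.

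The main obstacle is the double endpoint $p = \tilde p = 2$, which forces $q = \tilde q = 2n/(n-2)$ and is admissible only for $n \geq 3$. Here Christ--Kiselev is unavailable and one must run the full Keel--Tao atomic decomposition together with a careful bilinear interpolation between neighbouring admissible pairs; this is the delicate step in the original argument. For the present paper the Lorentz formulation $L^{q,2}_x$ is not just a cosmetic improvement but is essential, since it is precisely what permits applying H\"older's inequality sharply against the singular weight $|x|^{-b}$ in the nonlinear analysis developed in the subsequent sections.
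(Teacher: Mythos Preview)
The paper does not actually prove this proposition; it is recorded as a known result with a bare citation to Keel--Tao and no argument is given. Your sketch is a correct outline of the standard proof (real interpolation of the $L^1\to L^\infty$ dispersive bound and $L^2$-unitarity for part~(i), and the abstract Keel--Tao $TT^*$/bilinear interpolation machinery for part~(ii), including the endpoint), so there is nothing to compare against and nothing to correct.
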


\subsection{Local well-posedness and stability} \label{Subsection:2.2}

In this subsection, we recall the local theory in $H^1$ for the energy-critical \eqref{INLS}, discuss the fractional chain rules adapted to \eqref{INLS},
and then establish the corresponding long-time perturbation theory to improve the local theory to also hold with respect to $\dot{H}^1$ by density.

\vspace{0.6em}
We start by recalling the following local well-posedness theorem based on Lorentz spaces, which is obtainable by the ordinary contraction arguments. See \cite{AlouiTayachi} and \cite{Park24128202} for details.
The initial time is set to $t=0$ in the statement.

\begin{thm}[Local well-posedness in $H^1$; {\cite[Theorem 2.5]{Park24128202}}] \label{thrm:2.5-well-posedness}~

Let $n\geq 3$, $0<b<\min(2,n/2)$.
Define the four exponents $p_0, p_b, q_0, q_b \in [1,\infty)$ by
\begin{equation*}
p_\beta = \frac{2(n+2)}{n-2+2\beta}, \quad
q_\beta = \frac{2n(n+2)}{n^2+4-4\beta} \quad (\beta = 0,b).
\end{equation*}
Then for every initial data $\phi \in H^1(\mathbb{R}^n)$, there exists a unique solution $u$ of \eqref{INLS} with the maximal lifespan $T_*(\phi)>0$ in the solution space
\begin{equation*}
C_t([0, T_*(\phi)), H_x^1(\mathbb{R}^n)) \cap L_t^{p_b}([0, T_*(\phi)), W_x^{1;q_b,2}(\mathbb{R}^n)).
\end{equation*}
In addition, we have the following properties.

\begin{itemize}
\item[(i)]
$u$ is unique in $L_t^{p_0}([0, T], W_x^{1;q_0,2}(\mathbb{R}^n)) \cap L_t^{p_b}([0, T], W_x^{1;q_b,2}(\mathbb{R}^n))$ for $0<T<T_*(\phi)$.

\item[(ii)]
$u \in L_t^p([0, T], W_x^{1;q,2}(\mathbb{R}^n))$ for $0<T<T_*(\phi)$ and every $L^2$-admissible pair $(p,q)$.

\item[(iii)]
If $\norm{\dot{H}_x^1}{\phi}$ is small enough, then $T_*(\phi) = \infty$ and $u$ lies in $L_t^p([0, \infty), W_x^{1;q,2}(\mathbb{R}^n))$ for every $L^2$-admissible pair $(p,q)$.
Moreover, $u$ scatters in $H_x^1(\mathbb{R}^n)$ forward in time, in the sense that $\lim\limits_{t\to\infty} e^{-it\Delta}u(t)$ exists in $H_x^1(\mathbb{R}^n)$.

\item[(iv)]
If $\alpha \geq 1$, then for every $0<T<T_*(\phi)$, there is $\delta_0 > 0$ such that if $\psi \in H_x^1(\mathbb{R}^n)$ with $\norm{H_x^1}{\phi-\psi} < \delta_0$, we have $T_*(\psi) > T$ and for every $L^2$-admissible pair $(p,q)$, the corresponding maximal solution $v$ satisfies
\begin{equation*}
\norm{L_t^{p}([0, T], W_x^{1;q,2}(\mathbb{R}^n))} {u - v}
\leq C \norm{H_x^1(\mathbb{R}^n)} {\phi - \psi}.
\end{equation*}

\item[(v)]
If $\alpha<1$, and if $\phi_j \to \phi$ in $H_x^1(\mathbb{R}^n)$,
then for every $0<T<T_*(\phi)$,
we have $u_j \to u$ in $C_t([0, T], H_x^1(\mathbb{R}^n))$,
where $u_j$ is the solution of \eqref{INLS} with initial data $\phi_j$.
\end{itemize}
\end{thm}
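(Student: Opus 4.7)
The plan is to prove this by a standard contraction mapping argument applied to the Duhamel map
\[
\Phi(u)(t) = e^{it\Delta}\phi - i\mu \int_0^t e^{i(t-s)\Delta}\bigl(|x|^{-b}|u|^{\alpha} u\bigr)(s)\,ds,
\]
carried out in a small closed ball of the mixed Strichartz space $X(T) := L_t^{p_b}([0,T], W_x^{1;q_b,2}) \cap L_t^{p_0}([0,T], W_x^{1;q_0,2})$. The linear term $e^{it\Delta}\phi$ is controlled via the Lorentz-refined Strichartz estimates of Proposition \ref{strichartz}; the core work is to bound the nonlinearity $F(u) := |x|^{-b}|u|^{\alpha} u$ in a dual Strichartz norm by $C\|u\|_{X(T)}^{\alpha+1}$, up to a small factor provided either by the $\dot{H}^1$-smallness of $\phi$ (for (iii)) or by the vanishing $L_t^{p_b}W_x^{1;q_b,2}$-tail of $e^{it\Delta}\phi$ on a short time interval (for a general $\phi$).

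Concretely, I would separate the singular weight from the power: $|x|^{-b}$ belongs to the weak Lebesgue space $L^{n/b,\infty}(\mathbb{R}^n)$, and H\"older in Lorentz spaces extracts it cleanly, while the remaining factor $|u|^{\alpha} u$ is estimated by combining Sobolev embedding in Lorentz spaces with H\"older in time. The exponents $(p_b, q_b)$ are designed precisely so that this chain balances at the critical scaling $s_c = 1$. To put one derivative on $F(u)$, I would use the splitting $\nabla F(u) = (\nabla|x|^{-b})|u|^{\alpha} u + |x|^{-b}\nabla(|u|^{\alpha} u)$, where Hardy's inequality in Lorentz spaces absorbs the additional $|x|^{-1}$ factor in the first summand and the fractional chain rule handles the second. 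This closes the a priori bound and, together with the analogous difference estimate, gives local existence and uniqueness in $X(T)$.

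The dichotomy $\alpha\geq 1$ vs.\ $\alpha < 1$ enters only in the difference estimate $\|\Phi(u)-\Phi(v)\|$, since $z\mapsto |z|^{\alpha} z$ is Lipschitz at the first-derivative level only for $\alpha\geq 1$. This single fact is what makes the quantitative stability conclusion (iv) available for high powers and forces the weaker $C_t H_x^1$ continuity (v) in the low-power regime. The main obstacle I anticipate is therefore this low-power case: in the absence of a Lipschitz difference bound on $\nabla F$ in critical Lorentz norms, uniqueness and continuous dependence must be closed in the subcritical $C_t H_x^1$ topology via an energy-type argument in which $|x|^{-b}$ is again tamed by Hardy-type inequalities and H\"older slack gained from the subcritical embedding $H_x^1 \hookrightarrow L_x^{\alpha+2}_{\mathrm{loc}}$.

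Conclusions (i), (ii), and the scattering part of (iii) then follow routinely. Statement (i) is the standard unconditional-uniqueness upgrade from the Strichartz space to the $p_b$-$p_0$ intersection by comparing two candidate solutions on a small initial window and iterating. Statement (ii) is obtained by inserting the already-constructed solution into the Strichartz inequality applied to its Duhamel form, together with the estimate on $F(u)$ established above. For (iii), the contraction runs on $T=\infty$ as soon as $\|\phi\|_{\dot{H}^1}$ is small enough to supply the small factor; the resulting global dual Strichartz bound on $F(u)$ forces $e^{-it\Delta}u(t)$ to be Cauchy in $H_x^1$ as $t\to\infty$, which is the scattering assertion.
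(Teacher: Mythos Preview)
The paper does not give its own proof of this theorem; it simply recalls the statement from the author's earlier work \cite{Park24128202} and \cite{AlouiTayachi}, saying that the result ``is obtainable by the ordinary contraction arguments.'' Your proposal is exactly such an ordinary contraction-mapping argument in Lorentz--Strichartz spaces and is fully in line with that approach, so there is nothing to compare.
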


\vspace{0.6em}
As in the remark after {\cite[Theorem 2.5]{Park24128202}}, the low nonlinearity power $\alpha < 1$ poses a major concern since the derivative of the nonlinearity is not Lipschitz, which prevents us from the proof of uniqueness with the usual contraction arguments.
In the case of standard NLS, this issue was detected for $n > 6$, and thus Tao and Visan \cite{TaoVisan05} developed the dedicated perturbation theories using Besov-type function spaces.
A later work by Killip and Visan \cite{KillipVisan13} in 2013 has only used Sobolev spaces, which is more familiar, in conjunction with the fractional chain rule for fractional nonlinearities by \cite{Visan07}.

\vspace{0.6em}
We now aim to construct a perturbation theory following the arguments of \cite{KillipVisan13}.
To achieve that, we visit the fractional chain rules by Christ and Weinstein \cite{ChristWeinstein91} ($C^1$ nonlinearity) and Visan \cite{Visan07} (fractional nonlinearity) in the Lorentz space version as follows.

\begin{prop}[Fractional chain rule]~ \label{fract-chain}

Let $F$ be a $C^1(\mathbb{C})$ function.
Then for every Schwartz function $u$ on $\mathbb{R}^n$ and $0 < s < 1$, we have
\begin{equation*}
\norm{L^{p,q}(\mathbb{R}^n)}{|\nabla|^s F(u)}
\lesssim
\norm{L^{p_1,q_1}(\mathbb{R}^n)} {F'(u)}
\norm{L^{p_2,q_2}(\mathbb{R}^n)} {|\nabla|^{s} u}
\end{equation*}
provided that $1 < p_1,p_2,p < \infty$ and $1 \leq q_1,q_2,q < \infty$ (allowing $q = \infty$) satisfy
\begin{equation*}
0 < \frac{1}{p} = \frac{1}{p_1} + \frac{1}{p_2} < 1, \quad
0 \leq \frac{1}{q} \leq \frac{1}{q_1} + \frac{1}{q_2} \leq 1.
\end{equation*}
\end{prop}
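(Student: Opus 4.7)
The plan is to reduce the claim to a pointwise estimate of Christ--Weinstein type
\[
|\nabla|^s F(u)(x) \lesssim M(F'(u))(x) \cdot M(|\nabla|^s u)(x),
\]
where $M$ is the Hardy--Littlewood maximal operator, and then apply the Lorentz H\"older inequality stated above together with the boundedness of $M$ on $L^{r,\sigma}(\mathbb{R}^n)$ for $1<r<\infty$ and $1\leq \sigma\leq\infty$. The latter boundedness is standard: the weak-$(1,1)$ and trivial $L^\infty$ bounds for $M$ upgrade to bounds on Lorentz spaces by real interpolation.

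To derive the pointwise bound, I would start from the singular integral representation
\[
|\nabla|^s F(u)(x) = c_{n,s}\int_{\mathbb{R}^n}\frac{F(u(x+y))-F(u(x))}{|y|^{n+s}}\,dy,
\]
use the fundamental theorem of calculus to write
\[
F(u(x+y))-F(u(x)) = \bigl(u(x+y)-u(x)\bigr)\int_0^1 F'\bigl(\tau u(x+y)+(1-\tau)u(x)\bigr)\,d\tau,
\]
and then, following Christ and Weinstein \cite{ChristWeinstein91}, split the $y$-integration at a scale $r=r(x)>0$ chosen to balance the two resulting pieces. The inner region $\{|y|\leq r\}$ contributes a term dominated by $M(|\nabla|^s u)(x)$ times a ball-average of $|F'(u)|$ about $x$, while the outer region $\{|y|>r\}$ reduces, after a dyadic decomposition and further Hardy--Littlewood bounds, to a product of the two maximal functions above. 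A Fubini manoeuvre converts the $\tau$-average into a spatial average and yields the displayed pointwise bound.

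With this pointwise inequality in hand, the Lorentz H\"older inequality applied with exponents $(p_1,q_1)$ and $(p_2,q_2)$ gives
\[
\bigl\||\nabla|^s F(u)\bigr\|_{L^{p,q}} \lesssim \bigl\|M(F'(u))\bigr\|_{L^{p_1,q_1}}\bigl\|M(|\nabla|^s u)\bigr\|_{L^{p_2,q_2}},
\]
and two applications of the Lorentz boundedness of $M$, legitimate since $1<p_1,p_2<\infty$, finish the proof. The assumption $1/q_1+1/q_2\leq 1$ is precisely the second-index condition of Lorentz H\"older, while the slack $1/q\leq 1/q_1+1/q_2$ is absorbed by the monotonicity of $L^{p,q}$ in the second parameter.

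The main obstacle is the pointwise estimate itself: since $F$ is merely $C^1$, the average $\int_0^1 F'(\tau u(x+y)+(1-\tau)u(x))\,d\tau$ is not the value of any single nice function, and controlling it uniformly by $M(F'(u))(x)$ requires a careful change of variables to convert the $\tau$-average into a spatial average over a ball about $x$, together with an optimal choice of the cutoff scale $r(x)$. Once this pointwise reduction is secured, the Lorentz-level work is entirely soft.
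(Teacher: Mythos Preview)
Your strategy is the right one and matches what the paper does: the paper's own proof is a two-line citation, invoking Christ--Weinstein \cite{ChristWeinstein91} for the Lebesgue case $(p_i,q_i)=(p_i,p_i)$ and Aloui--Tayachi \cite{AlouiTayachi} for the Lorentz extension, and your outline is precisely the argument underlying those references---a pointwise maximal-function bound followed by Lorentz H\"older and the $L^{r,\sigma}$-boundedness of $M$.

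One technical caveat worth flagging: the literal pointwise inequality
\[
|\nabla|^s F(u)(x) \lesssim M(F'(u))(x)\cdot M(|\nabla|^s u)(x)
\]
is not what Christ--Weinstein actually prove, and it is not clear it holds as stated for the operator $|\nabla|^s$ itself. Their argument (and the variants in Taylor, Kenig--Ponce--Vega, etc.) passes through a Littlewood--Paley or difference-quotient square function $\mathcal{S}_s$ satisfying $\|\mathcal{S}_s f\|_{L^{p,q}}\sim\||\nabla|^s f\|_{L^{p,q}}$, and establishes a pointwise bound of the form $\mathcal{S}_s(F(u))(x)\lesssim M(F'(u))(x)\cdot\bigl(\mathcal{S}_s u(x)+M(|\nabla|^s u)(x)\bigr)$ or a closely related variant. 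The square-function equivalence extends to Lorentz spaces by real interpolation (this is exactly the device the paper uses in the next proposition, via the operator $\mathscr{D}_s$ in \eqref{visan-fftc-Ds-oper}), so your soft Lorentz-level conclusion goes through unchanged once you replace the bare operator by its square-function avatar. You already identify this reduction as the main obstacle; just be aware that the fix is to work with $\mathcal{S}_s$ rather than to attempt the bound on $|\nabla|^s F(u)$ directly.
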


\begin{proof}
The case $(p_1, p_2, p) = (q_1, q_2, q)$ corresponds to Christ and Weinstein \cite[Proposition 3.1]{ChristWeinstein91}.
For the general cases, see \cite[Lemma 2.4]{AlouiTayachi}.
\end{proof}

\begin{prop}[Fractional chain rule for the fractional nonlinearity]~ \label{fract-chain-holder}

Let $F$ be a H\"older-continuous function (i.e. $C^{0,\alpha}(\mathbb{C})$) of order $0 < \alpha \leq 1$.
Then for every Schwartz function $u$ on $\mathbb{R}^n$, $0 < s < \alpha$, and $s/\alpha < s_0 < 1$, we have
\begin{equation*}
\norm{L^{p,q}(\mathbb{R}^n)}{|\nabla|^s F(u)}
\lesssim
\norm{L^{p_1,q_1}(\mathbb{R}^n)} {u} ^{\alpha-{s}/{s_0}}
\norm{L^{p_2,q_2}(\mathbb{R}^n)} {|\nabla|^{s_0} u} ^{{s}/{s_0}}
\end{equation*}
provided that $1 < p_1,p_2,p < \infty$ and $1 \leq q_1,q_2,q < \infty$ (allowing $q = \infty$) satisfy
\begin{equation*}
0 < \frac{1}{p} = \frac{\alpha - s/s_0}{p_1} + \frac{s/s_0}{p_2} < 1, \quad
0 \leq \frac{1}{q} \leq \frac{\alpha - s/s_0}{q_1} + \frac{s/s_0}{q_2} \leq 1.
\end{equation*}
\end{prop}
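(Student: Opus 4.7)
The plan is to adapt Visan's Lebesgue-space proof \cite{Visan07} by replacing each ingredient with its Lorentz-space analogue. The starting point is the singular-integral representation
\[
|\nabla|^s F(u)(x) = c_{n,s}\,\mathrm{P.V.}\int_{\mathbb{R}^n} \frac{F(u(x)) - F(u(x+y))}{|y|^{n+s}}\,dy,
\]
which is valid since $0<s<\alpha\le 1$. After reducing to $F(0)=0$ (which does not affect $|\nabla|^s F(u)$), I would split the integral at a radius $R>0$ to be optimized pointwise. On $|y|\le R$, the H\"older continuity of $F$ together with the classical pointwise difference estimate
\[
|u(x+y)-u(x)| \lesssim |y|^{s_0}\,\mathcal{M}(|\nabla|^{s_0} u)(x) \qquad (0<s_0<1)
\]
controls the inner part by $R^{\alpha s_0 - s}\,\mathcal{M}(|\nabla|^{s_0} u)(x)^{\alpha}$; the hypothesis $s<\alpha s_0$ is precisely what makes the radial integral converge. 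On $|y|>R$, the H\"older bound $|F(z)|\lesssim |z|^\alpha$ together with an annular decomposition bounds the tail by $R^{-s}\,\mathcal{M}(|u|^\alpha)(x)$. Optimizing $R$ yields the pointwise estimate
\[
\bigl|\,|\nabla|^s F(u)(x)\,\bigr| \lesssim \mathcal{M}(|u|^\alpha)(x)^{\,1-s/(\alpha s_0)}\;\mathcal{M}(|\nabla|^{s_0} u)(x)^{\,s/s_0}.
\]

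Next I would take the $L^{p,q}$-quasinorm of both sides. The Lorentz-space H\"older inequality quoted above separates the product into a product of norms, while the elementary rearrangement identity $\|g^\theta\|_{L^{P,Q}} = \|g\|_{L^{\theta P,\theta Q}}^{\theta}$ strips the pointwise powers. The Hardy--Littlewood maximal inequality persists on $L^{P,Q}$ for $1<P<\infty$ (by real interpolation from its Lebesgue boundedness), so both maximal functions can be removed. A final application of the power identity converts $\||u|^\alpha\|$ back to $\|u\|^{\alpha}$, producing the claimed inequality, with the exponents $\alpha - s/s_0$ and $s/s_0$ matching the pointwise powers above.

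The main obstacle is the exponent bookkeeping: one must verify that all the intermediate Lorentz indices produced by H\"older's inequality and the rearrangement identity fall strictly inside $(1,\infty)$, as required by the maximal inequality, and that the scaling relation $1/p = (\alpha - s/s_0)/p_1 + (s/s_0)/p_2$ emerges automatically from the composition of these steps. The slack expressed as an inequality on the secondary index reflects both the monotonicity $L^{P,Q_1}\subset L^{P,Q_2}$ for $Q_1\le Q_2$ and the flexibility of the secondary index in the Lorentz H\"older inequality. No genuinely new analytic difficulty arises beyond the Lebesgue case of \cite{Visan07}, so the structure parallels the treatment of the $C^1$ case of Aloui and Tayachi \cite{AlouiTayachi}.
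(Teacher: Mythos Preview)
Your route differs from the paper's. The paper does not bound the singular-integral representation of $|\nabla|^s F(u)$ pointwise; instead it passes through the Strichartz square-function characterization
\[
\big\|\,|\nabla|^s f\,\big\|_{L^{p,q}} \sim \big\|\mathscr{D}_s(f)\big\|_{L^{p,q}},
\qquad
\mathscr{D}_s(f)(x) = \bigg( \int_0^\infty \Big(\int_{|y|<1} |f(x+ry)-f(x)|\,dy\Big)^2 \frac{dr}{r^{1+2s}}\bigg)^{1/2},
\]
first establishing this equivalence for Lorentz indices by real interpolation inside Strichartz's original argument, and then applying Visan's own pointwise estimate on $\mathscr{D}_s(F(u))$ followed by H\"older in $L^{p,q}$. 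The Lorentz upgrade thus happens at the level of the norm equivalence, not at the level of a new pointwise inequality.

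More importantly, the one-sided estimate $|u(x+y)-u(x)| \lesssim |y|^{s_0}\mathcal{M}(|\nabla|^{s_0}u)(x)$ you invoke is \emph{false} as stated: the standard Hedberg/Haj\l asz-type inequality requires the maximal function at both $x$ and $x+y$. For a counterexample in $\mathbb{R}^n$, $n\ge 2$, take $s_0=1$ and let $u$ be a bump of height $1$ and width $\epsilon\ll |y|$ centred at $x+y$; then the left side equals $1$ while $\mathcal{M}(|\nabla u|)(x)\sim \epsilon^{n-1}|y|^{-n}$, so the right side is $\sim\epsilon^{n-1}|y|^{1-n}\to 0$. With the correct two-sided bound, the $x+y$ contribution in your inner region leads, after annular summation, to an iterated object such as $\mathcal{M}\big([\mathcal{M}(|\nabla|^{s_0}u)]^\alpha\big)(x)$; this is still bounded on $L^{p,q}$ by two applications of the maximal inequality, so your strategy can be repaired, but this is precisely the extra work that the paper's $\mathscr{D}_s$ route absorbs automatically through its built-in averaging.
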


\begin{proof}
The case $(p_1, p_2, p) = (q_1, q_2, q)$, $\alpha \in (0, 1)$ corresponds to Visan \cite[Proposition A.1]{Visan07}.
For the general case,
we observe the following equivalent characterization of Sobolev-Lorentz spaces
\begin{equation*}
\norm{L^{p,q}(\mathbb{R}^n)}{|\nabla|^s f} \sim \norm{L^{p,q}(\mathbb{R}^n)}{\mathscr{D}_s(f)}
\end{equation*}
for every $1 < p < \infty$ and $1\leq q < \infty$, where
\begin{equation}
\label{visan-fftc-Ds-oper}
\mathscr{D}_s(f)(x) = \bigg( \int_{0}^{\infty} \Big({\int_{|y|<1} |f(x+ry) - f(x)| dy}\Big)^2 \frac{dr}{r^{1+2s}} \bigg)^{1/2}.
\end{equation}
We can show the equivalence by appropriate real interpolations within the proof of Strichartz \cite[Subsection 2.3]{Strichartz67}.
The result then follows by applying H\"older's inequality to a pointwise estimate of $\mathscr{D}_s(f)$ as in the proof of \cite[Proposition A.1]{Visan07}.
The Lipscitz case $\alpha = 1$ can be proved similarly.
\end{proof}

\vspace{0.6em}

To make use of Propositions \ref{fract-chain} and \ref{fract-chain-holder}, we consider the following lemma of an interpolative estimate which takes care of the singular coefficient $|x|^{-b}$ while preserving the regularity. Later in the perturbation theory for \eqref{INLS}, when $F = |u|^\alpha$ or $|u|^{\alpha-2} u^2$ with $\alpha \geq 1$, Lemma \ref{Ds-Xmb-Dms} can be combined with Proposition \ref{fract-chain}. When $\alpha < 1$, it can be combined with Proposition \ref{fract-chain-holder} instead.

\begin{lem} \label{Ds-Xmb-Dms}
Let $0 \leq s \leq 1$ and $0<b<2$. Then for every Schwartz function $F$ on $\mathbb{R}^n$, we have
\begin{equation*}
\norm{L^{p,q}(\mathbb{R}^n)}{|\nabla|^s (|x|^{-b} F)}
\lesssim
\norm{L^{\frac{1}{\frac{1}{p} - \frac{b}{n}}, q}(\mathbb{R}^n)} {|\nabla|^{s} F}
\end{equation*}
provided that $1 < p < n/(b+s)$ and $1 \leq q < \infty$.
\end{lem}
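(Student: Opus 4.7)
The plan is to treat the cases $s \in \{0,1\}$ and $s \in (0,1)$ separately, in each reducing to the tools developed in Subsection~\ref{Subsection:2.1} (H\"older in Lorentz, product rule, Hardy--Littlewood maximal function, and Sobolev embedding). The case $s = 0$ is immediate from H\"older's inequality combined with the fact that $|x|^{-b} \in L^{n/b,\infty}(\mathbb{R}^n)$, which holds whenever $0 < b < n$.

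For $s = 1$, I would apply the product rule
\[
\nabla(|x|^{-b} F) = -b \frac{x}{|x|^{b+2}} F + |x|^{-b} \nabla F.
\]
The second summand is bounded by $\norm{L^{r,q}(\mathbb{R}^n)}{\nabla F}$ by the $s=0$ argument applied to $\nabla F$. For the first, H\"older with $|x|^{-b-1} \in L^{n/(b+1),\infty}(\mathbb{R}^n)$ (valid since the hypothesis $p < n/(b+1)$ forces $b+1 < n$) gives $\norm{L^{p,q}(\mathbb{R}^n)}{|x|^{-b-1} F} \lesssim \norm{L^{\tilde r, q}(\mathbb{R}^n)}{F}$ with $1/\tilde r = 1/r - 1/n$, and the Lorentz-Sobolev embedding $\dot{W}^{1;r,q}(\mathbb{R}^n) \hookrightarrow L^{\tilde r, q}(\mathbb{R}^n)$ concludes.

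For $0 < s < 1$, I would use the pointwise equivalence $\norm{L^{p,q}(\mathbb{R}^n)}{|\nabla|^s f} \sim \norm{L^{p,q}(\mathbb{R}^n)}{\mathscr{D}_s(f)}$ already recalled inside the proof of Proposition~\ref{fract-chain-holder}. Setting $G(x) = |x|^{-b} F(x)$ and writing
\[
G(x+ry) - G(x) = |x|^{-b}\bigl[F(x+ry) - F(x)\bigr] + \bigl[|x+ry|^{-b} - |x|^{-b}\bigr] F(x+ry),
\]
one obtains $\mathscr{D}_s(G)(x) \leq |x|^{-b}\, \mathscr{D}_s(F)(x) + \tilde{I}(x)$, where $\tilde{I}(x)$ is the $\mathscr{D}_s$-type expression built from the second summand. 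I would then split the $r$-integral defining $\tilde{I}(x)$ at $r = |x|/2$: for $r < |x|/2$ and $|y|<1$ one has $|x+ry| \sim |x|$ and the mean value theorem applied to $z \mapsto |z|^{-b}$ on the segment $[x, x+ry]$ yields $\bigl||x+ry|^{-b} - |x|^{-b}\bigr| \lesssim r\, |x|^{-b-1}$, while for $r \geq |x|/2$ the triangle inequality $\bigl||x+ry|^{-b} - |x|^{-b}\bigr| \leq |x+ry|^{-b} + |x|^{-b}$ suffices. Recognizing the resulting spatial averages $\int_{|y|<1} |H(x+ry)|\, dy$ as Hardy--Littlewood maximal function values at $x$ and computing the $r$-integrals explicitly produces the pointwise bound
\[
\tilde{I}(x) \lesssim |x|^{-b-s}\, \mathcal{M}(F)(x) + |x|^{-s}\, \mathcal{M}\bigl(|\cdot|^{-b} F\bigr)(x).
\]
Taking the $L^{p,q}$ norm, each term reduces to $\norm{L^{r,q}(\mathbb{R}^n)}{|\nabla|^s F}$ after chaining H\"older in Lorentz (using the weak-Lorentz memberships $|x|^{-(b+s)} \in L^{n/(b+s),\infty}$, $|x|^{-s} \in L^{n/s,\infty}$, $|x|^{-b} \in L^{n/b,\infty}$), the $L^{p,q}$-boundedness of $\mathcal{M}$ for $1 < p < \infty$, and the Sobolev embedding $\norm{L^{\tilde r,q}(\mathbb{R}^n)}{F} \lesssim \norm{L^{r,q}(\mathbb{R}^n)}{|\nabla|^s F}$ with $1/\tilde r = 1/p - (b+s)/n$; the hypothesis $p < n/(b+s)$ ensures strict positivity of every index that appears. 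The main obstacle is producing the above pointwise bound on $\tilde{I}(x)$ in the regime $r \geq |x|/2$, where $|x+ry|$ can be arbitrarily small and the factor $|x+ry|^{-b}$ must be absorbed into the averaged function rather than pulled out as a pointwise weight; this is precisely what forces the second, $|\cdot|^{-b}$-weighted maximal term into the estimate and then requires the subsequent H\"older--Sobolev composition to close under the sole hypothesis $p < n/(b+s)$.
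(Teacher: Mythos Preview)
Your argument is correct and complete under the hypotheses $1 < p < n/(b+s)$, $1 \leq q < \infty$; the exponent bookkeeping in the final step (maximal function on $L^{p_1,q}$ with $1/p_1 = 1/p - s/n \in (b/n,1)$, then H\"older, then Sobolev) closes exactly as you describe.

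Your route, however, is genuinely different from the paper's. The paper does \emph{not} analyse $\mathscr{D}_s(|x|^{-b}F)$ pointwise. Instead it rewrites the statement as boundedness of the \emph{linear} operator $T_s F := |\nabla|^s\bigl(|x|^{-b}|\nabla|^{-s}F\bigr)$ from $L^{r,q}$ to $L^{p,q}$, proves the endpoints $s=0,1$ essentially as you do, and then fills in $0<s<1$ by Stein complex interpolation: for $z = j + it$ with $j\in\{0,1\}$ the imaginary powers $|\nabla|^{\pm it}$ are H\"ormander--Mikhlin multipliers with operator norms growing polynomially in $|t|$, hence the family $(T_z)$ has admissible growth on the two boundary lines and the three-lines lemma applies. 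The paper's proof is shorter and avoids any geometric splitting of the kernel, at the cost of invoking the complex-interpolation and multiplier machinery. Your proof is more hands-on and self-contained once the $\mathscr{D}_s$ characterization is granted, and it yields the additional pointwise information $\mathscr{D}_s(|x|^{-b}F)(x) \lesssim |x|^{-b}\mathscr{D}_s(F)(x) + |x|^{-b-s}\mathcal{M}(F)(x) + |x|^{-s}\mathcal{M}(|\cdot|^{-b}F)(x)$, which the interpolation argument does not see.
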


\begin{proof}
The problem is equivalent to showing the following boundedness of a linear operator.
\begin{equation*}
\norm{L^{p,q}(\mathbb{R}^n)}{|\nabla|^s (|x|^{-b} |\nabla|^{-s} F)}
\lesssim
\norm{L^{\frac{1}{\frac{1}{p} - \frac{b}{n}}, q}(\mathbb{R}^n)} {F}
\end{equation*}

The cases $s = 0, 1$ are relatively easy. Note the $L^{p,q}(\mathbb{R}^n)$ boundedness of Riesz operators when $s = 1$. We now fill in the gap $0<s<1$.
By the real interpolation on H\"ormander-Mikhlin multiplier theorem, for every $(j,t) \in \{0,1\} \times \mathbb{R}$, we see that
\begin{equation*}
\norm{L^{p,q}(\mathbb{R}^n)}{|\nabla|^{j+it} (|x|^{-b} |\nabla|^{-(j+it)} F)}
\lesssim
\langle t \rangle ^{2n+4}
\norm{L^{\frac{1}{\frac{1}{p} - \frac{b}{n}}, q}(\mathbb{R}^n)} {F}.
\end{equation*}
The proportionality constants grow subexponentially in $|t|$ for both $j$, and the claim follows by Stein complex interpolation. See Bergh and L\"ofstr\"om \cite{BerghLofstrom76} for the complex interpolation of Lorentz spaces.
\end{proof}

\vspace{0.6em}
Combining Lemma \ref{Ds-Xmb-Dms} with each of Propositions \ref{fract-chain} and \ref{fract-chain-holder}, we obtain the following two inequalities. The proofs are omitted.

\begin{prop}[Fractional chain rule, $|x|^{-b}$-weighted]~ \label{fract-chain-inhomog}

Let $F$ be a $C^1(\mathbb{C})$ function.
Then for every Schwartz function $u$ on $\mathbb{R}^n$ and $0 < s < 1$, we have
\begin{equation*}
\norm{L^{p,q}(\mathbb{R}^n)}{|\nabla|^s (|x|^{-b} F(u))}
\lesssim
\norm{L^{p_1,q_1}(\mathbb{R}^n)} {F'(u)}
\norm{L^{p_2,q_2}(\mathbb{R}^n)} {|\nabla|^{s} u}
\end{equation*}
provided that $1 < p_1,p_2,p < \infty$ and $1 \leq q_1,q_2,q < \infty$ (allowing $q = \infty$) satisfy
\begin{equation*}
\frac{b+s}{n} < \frac{1}{p} = \frac{1}{p_1} + \frac{1}{p_2} + \frac{b}{n} < 1, \quad
0 \leq \frac{1}{q} \leq \frac{1}{q_1} + \frac{1}{q_2} \leq 1.
\end{equation*}
\end{prop}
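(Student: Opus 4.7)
The plan is to compose the two building blocks the author has just set up: use Lemma \ref{Ds-Xmb-Dms} to peel off the singular weight $|x|^{-b}$, and then apply Proposition \ref{fract-chain} to the resulting unweighted composition $F(u)$. Concretely, define an intermediate exponent $\tilde p$ by $\frac{1}{\tilde p} = \frac{1}{p} - \frac{b}{n}$. The hypothesis $\frac{b+s}{n} < \frac{1}{p} < 1$ in the statement translates exactly to $1 < \tilde p < n/s$, which together with $1 \leq q < \infty$ is what Lemma \ref{Ds-Xmb-Dms} demands. Applying the lemma with $F(u)$ in place of its scalar input yields
\[
\norm{L^{p,q}(\mathbb{R}^n)}{|\nabla|^s(|x|^{-b} F(u))} \lesssim \norm{L^{\tilde p, q}(\mathbb{R}^n)}{|\nabla|^s F(u)}.
\]

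Next I would invoke Proposition \ref{fract-chain} with ambient exponent $\tilde p$ and factor exponents $(p_1, p_2)$, $(q_1, q_2)$. The required splitting $\frac{1}{\tilde p} = \frac{1}{p_1} + \frac{1}{p_2}$ is precisely the stated hypothesis $\frac{1}{p} = \frac{1}{p_1} + \frac{1}{p_2} + \frac{b}{n}$; the constraint $0 < \frac{1}{\tilde p} < 1$ follows immediately from $\frac{b+s}{n} < \frac{1}{p} < 1$ (so in particular $\frac{b}{n} < \frac{1}{p} < 1 + \frac{b}{n}$); and the secondary-index condition $\frac{1}{q} \leq \frac{1}{q_1} + \frac{1}{q_2} \leq 1$ carries over verbatim. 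This produces
\[
\norm{L^{\tilde p, q}(\mathbb{R}^n)}{|\nabla|^s F(u)} \lesssim \norm{L^{p_1, q_1}(\mathbb{R}^n)}{F'(u)} \norm{L^{p_2, q_2}(\mathbb{R}^n)}{|\nabla|^s u},
\]
and chaining the two bounds gives the claim.

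There is essentially no obstacle: the entire argument is the arithmetic check that the intermediate Lorentz exponent produced by the lemma coincides with the ambient exponent required by the unweighted chain rule, which is exactly why the author writes that the proof is omitted. The only minor caveat is that $F(u)$ need not be Schwartz even when $u$ is, but a routine truncation-and-density step reduces to the case handled by Lemma \ref{Ds-Xmb-Dms} and Proposition \ref{fract-chain}. The same template, with Proposition \ref{fract-chain-holder} replacing Proposition \ref{fract-chain} and the reparametrization $s \mapsto s/s_0$ already built into the H\"older-continuous chain rule, proves the sibling estimate for $C^{0,\alpha}$ nonlinearities used below when $\alpha < 1$.
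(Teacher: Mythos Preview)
Your proposal is correct and follows exactly the approach the paper indicates (``Combining Lemma \ref{Ds-Xmb-Dms} with each of Propositions \ref{fract-chain} and \ref{fract-chain-holder}\ldots\ The proofs are omitted.''): strip the weight with Lemma \ref{Ds-Xmb-Dms}, then apply the unweighted chain rule with the intermediate exponent $\tilde p$. The only cosmetic remark is that Lemma \ref{Ds-Xmb-Dms} states its hypothesis directly as $1 < p < n/(b+s)$, i.e.\ $\frac{b+s}{n} < \frac{1}{p} < 1$, so the translation to $\tilde p$ is not strictly needed there; and the endpoint $q=\infty$ (not covered verbatim by Lemma \ref{Ds-Xmb-Dms}) is recovered by first working with a finite $q_0 \le (1/q_1+1/q_2)^{-1}$ and using $L^{p,q_0}\hookrightarrow L^{p,\infty}$.
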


\begin{prop}[Fractional chain rule for the fractional nonlinearity, $|x|^{-b}$-weighted]~ \label{fract-chain-holder-inhomog}

Let $F$ be a H\"older continuous function of order $0 < \alpha \leq 1$.
Then for every Schwartz function $u$ on $\mathbb{R}^n$, $0 < s < \alpha$, and $s/\alpha < s_0 < 1$, we have
\begin{equation*}
\norm{L^{p,q}(\mathbb{R}^n)}{|\nabla|^s (|x|^{-b} F(u))}
\lesssim
\norm{L^{p_1,q_1}(\mathbb{R}^n)} {u} ^{\alpha-\frac{s}{s_0}}
\norm{L^{p_2,q_2}(\mathbb{R}^n)} {|\nabla|^{s_0} u} ^{\frac{s}{s_0}}
\end{equation*}
provided that $1 < p_1,p_2,p < \infty$ and $1 \leq q_1,q_2,q < \infty$ (allowing $q = \infty$) satisfy
\begin{equation*}
\frac{b+s}{n} < \frac{1}{p} = \frac{\alpha - s/s_0}{p_1} + \frac{s/s_0}{p_2} + \frac{b}{n} < 1, \quad
0 \leq \frac{1}{q} \leq \frac{\alpha - s/s_0}{q_1} + \frac{s/s_0}{q_2} \leq 1.
\end{equation*}
\end{prop}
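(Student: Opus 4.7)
The plan is to derive Proposition \ref{fract-chain-holder-inhomog} by composing Lemma \ref{Ds-Xmb-Dms}, which absorbs the singular weight $|x|^{-b}$ into a change of Lorentz exponent, with Proposition \ref{fract-chain-holder}, the unweighted fractional chain rule for H\"older nonlinearities. Both ingredients are already formulated in Lorentz spaces, so the argument reduces to parameter bookkeeping. First, I would introduce the intermediate exponent $\tilde{p}$ defined by $\tfrac{1}{\tilde{p}} = \tfrac{1}{p} - \tfrac{b}{n}$. The hypothesis $\tfrac{b+s}{n} < \tfrac{1}{p} < 1$ is exactly the range required by Lemma \ref{Ds-Xmb-Dms}, so applying that lemma to the Schwartz function $F(u)$ yields
\[
\norm{L^{p,q}(\mathbb{R}^n)}{|\nabla|^s (|x|^{-b} F(u))}
\lesssim
\norm{L^{\tilde{p},q}(\mathbb{R}^n)}{|\nabla|^s F(u)}.
\]

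Next, I would invoke Proposition \ref{fract-chain-holder} on the right-hand side with target exponent $\tilde{p}$ and the parameters $p_1, p_2, q_1, q_2, s_0$ as in the statement. Its scaling hypothesis $\tfrac{1}{\tilde{p}} = \tfrac{\alpha - s/s_0}{p_1} + \tfrac{s/s_0}{p_2}$, after substituting the definition of $\tilde{p}$, becomes exactly the weighted identity
\[
\frac{1}{p} = \frac{\alpha - s/s_0}{p_1} + \frac{s/s_0}{p_2} + \frac{b}{n}
\]
assumed in the proposition, while the Lorentz second-index condition transfers directly. Chaining the two estimates then produces the claimed bound. The companion Proposition \ref{fract-chain-inhomog} follows by the identical two-step argument with Proposition \ref{fract-chain} in place of Proposition \ref{fract-chain-holder}.

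The main obstacle, such as it is, will be verifying that the intermediate exponent $\tilde{p}$ lies in the admissible range for \emph{both} component results simultaneously: Lemma \ref{Ds-Xmb-Dms} requires $\tfrac{b+s}{n} < \tfrac{1}{p} < 1$, while Proposition \ref{fract-chain-holder} applied at exponent $\tilde{p}$ requires $0 < \tfrac{1}{\tilde{p}} < 1$, equivalently $\tfrac{b}{n} < \tfrac{1}{p} < 1 + \tfrac{b}{n}$. Both are implied by the proposition's hypothesis, so no additional constraint appears beyond $\tfrac{b+s}{n} < \tfrac{1}{p} < 1$. Endpoint difficulties are avoided because $1 < p_1, p_2 < \infty$ already excludes the boundary cases in Proposition \ref{fract-chain-holder}, and no sharp-constant analysis is needed since only boundedness is at stake.
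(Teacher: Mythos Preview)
Your proposal is correct and follows exactly the paper's own approach: the paper states immediately before Propositions \ref{fract-chain-inhomog} and \ref{fract-chain-holder-inhomog} that both are obtained by combining Lemma \ref{Ds-Xmb-Dms} with Propositions \ref{fract-chain} and \ref{fract-chain-holder} respectively, and then omits the proofs. Your parameter bookkeeping for the intermediate exponent $\tilde{p}$ is accurate; the only cosmetic point is that $F(u)$ is not literally Schwartz, but Lemma \ref{Ds-Xmb-Dms} is a linear operator bound and extends by density, so this causes no difficulty.
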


\begin{remk}
When $b < n(1-\alpha)$ in addition, Proposition \ref{fract-chain-holder-inhomog} is also provable by direct computation of a pointwise inequality for $\mathscr{D}_s (|x|^{-b} F(u))$, where $F(u)$ is either $|u|^\alpha$ or $|u|^{\alpha-2} u^2$ and the operator $\mathscr{D}_s$ is defined as \eqref{visan-fftc-Ds-oper}.
The combination of Lemma \ref{Ds-Xmb-Dms} and Proposition \ref{fract-chain-holder}, however, gives a more efficient result by not requiring the condition $b < n(1-\alpha)$.
\end{remk}

\vspace{0.6em}
We are now ready to develop the stability theory of \eqref{INLS}.
We consider the two cases separately: the high nonlinearity power ($\alpha > 1$) and the low nonlinearity power $\alpha \leq 1$.

\subsubsection{Case 1: high nonlinearity power ($\alpha \geq 1$)}

When $\alpha \geq 1$, Guzman and Xu \cite[Subsection 2.1]{GuzmanXu24.6} has already developed the stability theory of INLS for $n=3,4,5$ and $0 < b \leq \min(\frac{4}{n}, \frac{6-n}{2})$.
The restriction $b \leq \frac{4}{n}$ comes from the use of Hardy inequality, which is eliminable by detouring to the use of Lorentz spaces.
In this case, \cite[Lemma 2.1]{GuzmanXu24.6} turns into a simple application of H\"older inequalities:
\[
\norm{L_t^2(I, L_x^{\frac{2n}{n+2},2})} {|x|^{-b}|f|^\alpha g} \\
\leq
\norm{S(I)}{f}^\alpha \norm{\tilde{W}(I)}{g}
\]
where $S(I) = L_t^{\frac{2(n+2)}{n-2}}(I, L_x^{\frac{2(n+2)}{n-2}})$ and $
\tilde{W}(I) = L_t^{\frac{2(n+2)}{n-2+2b}}(I, L_x^{\frac{2(n+2)}{n^2+4-4b}, 2})
$.

\vspace{0.6em}
After a series of direct computations, we reach the stability theory for $\alpha \geq 1$ as the following lemma.
For the statement, we temporarily denote
\[
\norm{X(I)}{f}
=	\norm{L_{t}^{\frac{2(n+2)}{n-2}}(I,L_{x}^{\frac{2(n+2)}{n-2},2})}{f}
+	\norm{\tilde{W}(I)}{f}.
\]

\begin{lem}[Stability for INLS, $\alpha \geq 1$]~ \label{longperturb..a>1}

Let $n = 3, 4, 5$ and $0 < b \leq (6-n)/2$.
Let $t_0 \in I \subset \mathbb{R}$ be a compact time interval and
$\tilde{u} : I \times \mathbb{R}^n \to \mathbb{C}$ be an approximate INLS solution in the sense that
\begin{equation*}
i\tilde{u}_t + \Delta\tilde{u} = \mu |x|^{-b} |\tilde{u}|^\alpha \tilde{u} + e,
\end{equation*}
with the initial data $\tilde{u}(t_0) \in \dot{H}_x^1(\mathbb{R}^n)$ at time $t=t_0$, satisfying
\begin{equation*}
\norm{L_t^\infty(I, \dot{H}_x^1)}{\tilde{u}} \leq E, \quad
\norm{S(I)}{\tilde{u}} \leq L
\end{equation*}
for some constants $E, L > 0$.
Let $u(t_0) \in \dot{H}_x^1 (\mathbb{R}^n)$ such that
\begin{align*}
\norm{\dot{H}_x^1}{u(t_0) - \tilde{u}(t_0)} \leq D, \quad
\norm{X(I)} {e^{i(t-t_0)\Delta}(u(t_0) - \tilde{u}(t_0))} \leq \epsilon, \quad
\norm{L_t^2(I, \dot{W}_x^{1; \frac{2n}{n+2}, 2})}{e} \leq \epsilon
\end{align*}
for some constant $D>0$ and some $0 < \epsilon < \epsilon_1 = \epsilon_1(E,L,D)$.
Then, there exists a unique solution $u : I\times\mathbb{R}^n \to \mathbb{C}$ of \eqref{INLS} with the initial data $u(t_0)$ at time $t = t_0$ satisfying
\begin{align*}
\norm{X(I)} {u-\tilde{u}} \leq C(E,L,D)\epsilon, \quad
\norm{S(I)} {u} \leq C(E,L,D).
\end{align*}

\end{lem}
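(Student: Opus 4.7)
The plan is to run the standard long-time perturbation argument in the style of Killip--Visan, adapted to the inhomogeneous setting by leveraging the Lorentz-space H\"older estimate
\[
\norm{L_t^2(J, L_x^{\frac{2n}{n+2},2})}{|x|^{-b}|f|^\alpha g} \lesssim \norm{S(J)}{f}^\alpha \norm{\tilde{W}(J)}{g}
\]
quoted at the start of the subsection, together with the $|x|^{-b}$-weighted fractional chain rule (Proposition \ref{fract-chain-inhomog}) for its derivative counterpart. First, partition $I = \bigcup_{j=1}^N I_j$ at times $t_0 = \tau_0 < \tau_1 < \cdots < \tau_N$ into consecutive subintervals such that $\norm{S(I_j)}{\tilde{u}} \leq \eta$ on each piece, where $\eta = \eta(E, D)$ is a small threshold to be fixed later and $N$ is bounded in terms of $L$ and $\eta$. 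The strategy is then to construct $u$ on each $I_j$ by a contraction argument applied to the difference $w := u - \tilde{u}$, which satisfies
\[
iw_t + \Delta w = \mu|x|^{-b}\bigl(|u|^\alpha u - |\tilde{u}|^\alpha \tilde{u}\bigr) - e, \qquad w(t_0) = u(t_0) - \tilde{u}(t_0),
\]
and to iterate the resulting bounds across the partition.

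On each $I_j$, Strichartz estimates in the combined $X$ and $L_t^\infty \dot{H}_x^1$ norms yield
\[
\norm{X(I_j)}{w} \lesssim \norm{X(I_j)}{e^{i(t-\tau_{j-1})\Delta} w(\tau_{j-1})} + \norm{L_t^2 \dot{W}_x^{1;\frac{2n}{n+2},2}(I_j)}{|x|^{-b}(|u|^\alpha u - |\tilde{u}|^\alpha \tilde{u})} + \epsilon.
\]
The undifferentiated piece of the nonlinear term is bounded directly by the displayed H\"older estimate applied to the pointwise inequality $\bigl||u|^\alpha u - |\tilde{u}|^\alpha \tilde{u}\bigr| \lesssim (|u|^\alpha + |\tilde{u}|^\alpha)|w|$. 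For the differentiated piece, write
\[
|u|^\alpha u - |\tilde{u}|^\alpha \tilde{u} = \int_0^1 \bigl[F_z(\tilde{u} + \theta w)\,w + F_{\bar z}(\tilde{u} + \theta w)\,\bar{w}\bigr]\,d\theta
\]
with $F(z) = |z|^\alpha z$, and distribute $\nabla$: when $\nabla$ lands on $w$ or $\bar{w}$ H\"older alone suffices, and when it lands on an $F_z$ factor we invoke Proposition \ref{fract-chain-inhomog}, which applies because $\alpha \geq 1$ makes $F_z$ a $C^1$ function of its argument. Both cases produce a bound of the schematic form $\bigl(\norm{S(I_j)}{\tilde{u}}^\alpha + \norm{S(I_j)}{w}^\alpha\bigr)\bigl(\norm{X(I_j)}{w} + \norm{X(I_j)}{\tilde{u}}\bigr)$. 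Choosing $\eta^\alpha$ small enough to absorb the leading $\tilde{u}$-factor into the left-hand side and running a continuity bootstrap then closes the local bound $\norm{X(I_j)}{w} \lesssim \norm{\dot{H}_x^1}{w(\tau_{j-1})} + \epsilon_{j-1}$ and yields existence and uniqueness of $u$ on $I_j$. Iterating over $j = 1, \ldots, N$ while tracking how the hand-off error $\norm{\dot{H}_x^1}{w(\tau_j)}$ compounds at each partition point gives $\norm{X(I)}{u - \tilde{u}} \leq C(E, L, D)\,\epsilon$ and $\norm{S(I)}{u} \leq C(E, L, D)$.

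The main obstacle is the derivative-of-the-difference estimate in the regime $1 \leq \alpha < 2$, where $F'$ is only H\"older continuous of order $\alpha - 1$ rather than Lipschitz. Differentiating $F(u) - F(\tilde{u})$ directly via a chain rule would run into this lack of smoothness, which is why the integral representation above is essential: the fractional chain rule is only ever invoked for $F_z$ evaluated at a single argument, so Proposition \ref{fract-chain-inhomog} suffices, and the non-smooth dependence on $w$ is carried by the linear factors $w, \bar{w}$ whose weighted Sobolev-Lorentz norms are controlled via Lemma \ref{Ds-Xmb-Dms}. A careful matching of Lorentz indices --- facilitated by the range restriction $0 < b \leq (6-n)/2$ --- ensures that all the H\"older and chain-rule exponents assemble consistently on the right-hand side.
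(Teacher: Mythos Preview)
The paper omits the proof and defers to \cite{GuzmanXu24.6} with Lorentz-space modifications, so your partition-and-iterate strategy is exactly the intended route and is correct in outline. There is, however, a genuine error in your schematic nonlinear estimate that would block the bootstrap if taken at face value: the displayed bound $\bigl(\norm{S(I_j)}{\tilde{u}}^\alpha + \norm{S(I_j)}{w}^\alpha\bigr)\bigl(\norm{X(I_j)}{w} + \norm{X(I_j)}{\tilde{u}}\bigr)$ contains the cross term $\norm{S(I_j)}{\tilde{u}}^\alpha \norm{X(I_j)}{\tilde{u}}$, which carries no factor of $w$ whatsoever. That term is of size $\eta^\alpha$ times a quantity comparable to $E$, not to $\epsilon$, so it cannot be absorbed into the left and the claimed local bound $\norm{X(I_j)}{w} \lesssim \norm{\dot{H}_x^1}{w(\tau_{j-1})} + \epsilon_{j-1}$ does not follow. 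The fix is simply to keep the accounting honest: writing $\nabla\bigl[F(u)-F(\tilde u)\bigr] = F'(u)\nabla w + \bigl[F'(u)-F'(\tilde u)\bigr]\nabla\tilde u$ and using $|F'(u)-F'(\tilde u)| \lesssim (|u|^{\alpha-1}+|\tilde u|^{\alpha-1})|w|$ (valid for $\alpha\geq 1$) yields, after Lemma \ref{Ds-Xmb-Dms} and H\"older,
\[
\norm{\dot{N}^1(I_j)}{|x|^{-b}(F(u)-F(\tilde u))}
\lesssim \bigl(\norm{S}{\tilde u}+\norm{S}{w}\bigr)^\alpha \norm{\tilde W}{\nabla w}
+ \bigl(\norm{S}{\tilde u}+\norm{S}{w}\bigr)^{\alpha-1}\norm{S}{w}\,\norm{\tilde W}{\nabla\tilde u},
\]
in which every term carries at least one $w$-factor. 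Your integral representation reaches the same place, but the $w$ appearing linearly outside $F_z(v_\theta)$ must be tracked through the estimate rather than dropped into a symmetric schematic.

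Two smaller corrections. Proposition \ref{fract-chain-inhomog} is stated only for fractional orders $0<s<1$, whereas you are taking a full gradient; the right tool here is Lemma \ref{Ds-Xmb-Dms} at $s=1$ combined with the classical (pointwise) chain rule, which needs nothing more than $F\in C^1$. Also, at the endpoint $b=(6-n)/2$ one has $\alpha=1$ and $F_z(v)=c|v|$ is Lipschitz but not $C^1$, contrary to your assertion; this is harmless for the integer-order argument above but would be an obstruction if you actually needed the $C^1$ hypothesis of Proposition \ref{fract-chain-inhomog}.
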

Lemma \ref{longperturb..a>1} is intended as the Lorentz-space variant of \cite[Proposition 2.4]{GuzmanXu24.6}.
The proof is omitted since it is replicable from \cite{GuzmanXu24.6}
with a few modification of function spaces: $X(I)$ and $\tilde{W}(I)$ take the roles of $S^1(I)$ and $W(I)$ respectively in \cite{GuzmanXu24.6}.

\subsubsection{Case 2: low nonlinearity power ($\alpha \leq 1$)}

When \(\alpha < 1\), although the stability theory for the energy-critical INLS has not been fully developed, some progress has been made for the standard energy-critical NLS, notably by Tao and Visan \cite{TaoVisan05} and Killip and Visan \cite{KillipVisan13}. Given the relevance of weighted fractional chain rules, it is natural to consider extending their methods to INLS. We adopt the approach from \cite[Section 3]{KillipVisan13} to avoid Besov-type spaces.

\vspace{0.6em}
Before the statement, we start with noticing some facts about parameters.
Since $n = 3$, $0<b<3/2$ always satisfies $\alpha > 1$, we may assume $n \geq 4$.
We also choose some more values that depend on or are restricted with respect to $n$ and $b$.
Let $0 < s, s_0 < 1$ and $1 < p,q < \infty$ satisfy the following relations.
\begin{equation*}
0 < s < \dfrac{\alpha}{1+\alpha}, \quad
s_0 = 1-s, \quad
p = \dfrac{2+\alpha}{s}, \quad
q' = \frac{q}{q-1} = \dfrac{2+\alpha}{(1+\alpha)s}
\end{equation*}
Notice that $s / \alpha < s_0$ and $2 < p < \infty$, while $q$ is not restricted to lie in $[2, \infty)$.

\vspace{0.6em}
Next, given a regularity $0\leq \gamma \leq 1$, we define two function spaces
\begin{align*}
X^{\gamma}(I) = {L_t^p(I,\dot{W}_x^{\gamma; \frac{1}{\frac{1}{2} - \frac{1-\gamma}{n} - \frac{2}{np}}, 2}(\mathbb{R}^n))}, \quad
Y^{\gamma}(I) = {L_t^{q'}(I,\dot{W}_x^{\gamma; \frac{1}{\frac{1}{2} + \frac{1+\gamma}{n} - \frac{2}{nq'}}, 2}(\mathbb{R}^n))}.
\end{align*}
Roughly speaking, these two function spaces play similar roles to the Strichartz and dual Strichartz spaces, respectively, which we can see from the embeddings $\dot{S}^1(I) \hookrightarrow X^{\gamma}(I)$ ($0 \leq \gamma \leq 1$) and $Y^{1}(I) \hookrightarrow \dot{N}^1(I)$ ($q \geq 2$).
We also see the following unusual Strichartz estimate, which is reminiscent of \cite[Lemma 3.2]{TaoVisan05} and \cite[Lemma 3.10]{KillipVisan13}. (The idea of proof is similar too.)
\begin{equation} \label{strichartz-exotic}
\norm{X^s(I)} {\int_{t_0}^{t} e^{i(t-\tau)\Delta} F(\tau) \,d\tau}
\lesssim \norm{Y^s(I)} {F}
\end{equation}
We further notice the following analogs of \cite[Lemma 3.12]{KillipVisan13} by the weighted fractional chain rules.
\begin{align}
\label{nonl-est-exotic-1}
\norm{Y^s (I)}{|x|^{-b} |u|^\alpha u}
&\lesssim \norm{X^0 (I)}{u} ^ \alpha \norm{X^s (I)}{u} \\
\label{nonl-est-exotic-2}
\norm{Y^s (I)}{|x|^{-b} |u|^\alpha w}
&\lesssim \norm{X^0 (I)}{w} \norm{X^0 (I)}{u} ^ {\alpha - \frac{s}{s_0}} \norm{X^{s_0} (I)}{u} ^ {\frac{s}{s_0}}
+ \norm{X^s (I)}{w} \norm{X^0 (I)}{u} ^ {\alpha}
\end{align}
For both \eqref{nonl-est-exotic-1} and \eqref{nonl-est-exotic-2}, the fractional chain rules are applicable when
\begin{equation*}
\frac{b+s}{n} < \frac{1}{2} + \frac{1+s}{n} - \frac{2}{nq'},
\end{equation*}
which always holds for our choice of the new parameters $s, s_0, p, q$ since
\begin{equation*}
\Big( \frac{1}{2} + \frac{1+s}{n} - \frac{2}{nq'} \Big) - \frac{b+s}{n}
= \frac{(n + 2 - 2b)(n - b - 2s)}{2n(n-b)}
> 0.
\end{equation*}

\vspace{0.6em}
Now we proceed to develop the short-time perturbation theory for \eqref{INLS} with $\alpha \leq 1$.

\begin{lem}[Short-time perturbation for INLS, $\alpha \leq 1$]~ \label{shortperturb}

Assume that $n \geq 4$, $0 < b < 2$ and $0 < \alpha \leq 1$.
Let $I$ be a compact time interval, and
let $\tilde{u} : I \times \mathbb{R}^n \to \mathbb{C}$ be an approximate solution to \eqref{INLS} in the sense that
\begin{equation*}
i\tilde{u}_t + \Delta\tilde{u} = \mu |x|^{-b} |\tilde{u}|^\alpha \tilde{u} + e
\end{equation*}
for some function $e$ representing error.
Assume that
\begin{equation*}
\norm{L_t^\infty(I, \dot{H}_x^1)}{\tilde{u}} \leq E
\end{equation*}
for some constant $E>0$.
Let $t_0 \in I$ and assume also that $u(t_0) \in \dot{H}_x^1 (\mathbb{R}^n)$ obeys
\begin{equation}
\label{shortperturb-small-0}
\norm{\dot{H}_x^1}{u(t_0) - \tilde{u}(t_0)} \leq D
\end{equation}
for some constant $D>0$.
Lastly, assume also the smallness conditions
\begin{align}
\label{shortperturb-small-1}
\norm{X^{s}(I)} {\tilde{u}} &\leq \delta, \\
\label{shortperturb-small-2}
\norm{X^{s}(I)} {e^{i(t-t_0)\Delta} (u(t_0) - \tilde{u}(t_0))} &\leq \epsilon, \\
\label{shortperturb-small-3}
\norm{\dot{N}^1(I)}{e} &\leq \epsilon
\end{align}
for some small constants $0 < \delta = \delta(E)$ and $0 < \epsilon < \epsilon_0 = \epsilon_0(E,D)$.
Then, there exists a unique solution $u : I\times\mathbb{R}^n \to \mathbb{C}$ of \eqref{INLS} with the initial data $u(t_0)$ at time $t = t_0$ satisfying
\begin{align}
\label{shortperturb-res-1}
\norm{X^{s}(I)} {u-\tilde{u}} &\lesssim \epsilon, \\
\label{shortperturb-res-3}
\norm{\dot{S}^1(I)} {u - \tilde{u}} &\lesssim D, \\
\label{shortperturb-res-2}
\norm{\dot{S}^1(I)} {u} &\lesssim E+D, \\
\label{shortperturb-res-4}
\norm{Y^{s}(I)} {|x|^{-b}(|u|^\alpha u - |\tilde{u}|^\alpha \tilde{u})} &\lesssim \epsilon, \\
\label{shortperturb-res-5}
\norm{\dot{N}^1(I)} {|x|^{-b}(|u|^\alpha u - |\tilde{u}|^\alpha \tilde{u})} &\leq D.
\end{align}

\end{lem}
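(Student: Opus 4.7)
The plan is to adapt the argument of Killip and Visan \cite[Lemma 3.4]{KillipVisan13} to the inhomogeneous setting, replacing their homogeneous chain rules with the weighted versions of Propositions \ref{fract-chain-inhomog}--\ref{fract-chain-holder-inhomog}. Setting $w = u - \tilde{u}$, the difference solves
\begin{equation*}
iw_t + \Delta w = |x|^{-b}\bigl(|u|^\alpha u - |\tilde{u}|^\alpha\tilde{u}\bigr) - e, \qquad w(t_0) = u(t_0) - \tilde{u}(t_0),
\end{equation*}
and I would control $w$ in two stages: first at the auxiliary regularity $s$ in the space $X^s(I)$, which delivers \eqref{shortperturb-res-1} and \eqref{shortperturb-res-4}, then an upgrade to full derivative control in $\dot{S}^1(I)$, which delivers \eqref{shortperturb-res-3}, \eqref{shortperturb-res-2}, and \eqref{shortperturb-res-5}. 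Existence and uniqueness of $u$ simultaneously fall out of the Picard-type iteration underlying the first stage.

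For the first stage, Duhamel's formula paired with the exotic Strichartz estimate \eqref{strichartz-exotic} gives
\begin{equation*}
\|w\|_{X^s(I)} \lesssim \|e^{i(t-t_0)\Delta}w(t_0)\|_{X^s(I)} + \bigl\||x|^{-b}(|u|^\alpha u - |\tilde{u}|^\alpha\tilde{u})\bigr\|_{Y^s(I)} + \|e\|_{Y^s(I)}.
\end{equation*}
Writing $u = \tilde{u}+w$ and applying \eqref{nonl-est-exotic-2} to the pointwise bound $\bigl||u|^\alpha u - |\tilde{u}|^\alpha\tilde{u}\bigr| \lesssim (|\tilde{u}|^\alpha + |w|^\alpha)|w|$ controls the nonlinear term by a combination of $\delta^\alpha \|w\|_{X^s(I)}$-type pieces and higher powers of $\|w\|_{X^s(I)}$. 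After absorbing the linear piece by choosing $\delta = \delta(E)$ small and using \eqref{shortperturb-small-1}--\eqref{shortperturb-small-3}, a continuity bootstrap in $t$ (or equivalently a Banach fixed-point argument, which simultaneously produces $u$ and its uniqueness in $X^s$) yields \eqref{shortperturb-res-1}, from which \eqref{shortperturb-res-4} follows as a byproduct of the same nonlinear estimate.

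The second stage is where the main obstacle lies: standard Strichartz reduces the $\dot{S}^1$ control of $w$ to estimating $\bigl\||x|^{-b}(|u|^\alpha u - |\tilde{u}|^\alpha\tilde{u})\bigr\|_{\dot{N}^1(I)}$, but since $\alpha \leq 1$ the map $z \mapsto |z|^\alpha z$ has derivative that is only H\"older-continuous of order $\alpha$, so the ordinary chain rule fails. I would split $|u|^\alpha u - |\tilde{u}|^\alpha\tilde{u}$ into its piece linear in $w$ and the remainder (quadratic or higher in $w$, with the remaining $\alpha - 1$ powers of absolute-value factors carried by either $\tilde{u}$ or $w$), and distribute the full derivative by writing $\nabla \sim |\nabla|^{1-s_0}\cdot|\nabla|^{s_0}$ (up to Riesz transforms): the $|\nabla|^{s_0}$-derivative is moved onto the H\"older-$\alpha$ factor via Proposition \ref{fract-chain-holder-inhomog}, while the smooth remaining pieces are treated with Proposition \ref{fract-chain-inhomog}. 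The admissibility condition $\frac{b+s}{n} < \frac{1}{2} + \frac{1+s}{n} - \frac{2}{nq'}$ verified in the paragraph preceding the lemma guarantees that both weighted chain rules are legitimate at the chosen parameters. Combining the resulting bound with the $X^s$-control from the first stage, the hypothesis \eqref{shortperturb-small-0}, and the a priori estimate $\|\tilde{u}\|_{\dot{S}^1(I)} \lesssim E$ (inherited from applying Strichartz to the approximate equation), the triangle inequality closes \eqref{shortperturb-res-5}, \eqref{shortperturb-res-3}, and \eqref{shortperturb-res-2}.
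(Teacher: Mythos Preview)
Your two-stage plan has a structural gap: the stages cannot be decoupled in the way you describe. The nonlinear estimate \eqref{nonl-est-exotic-2} that you invoke in the first stage does \emph{not} produce only ``$\delta^\alpha\|w\|_{X^s}$-type pieces and higher powers of $\|w\|_{X^s}$''; it produces a factor $\|\cdot\|_{X^{s_0}}$, and since $s_0 = 1-s > s$ this is not controlled by $X^s$. The only available bound comes from the embedding $\dot{S}^1(I) \hookrightarrow X^{s_0}(I)$, so the $X^s$-bootstrap on $w$ already contains $\|w\|_{\dot{S}^1(I)}$ and the two estimates must be closed together. In the paper this is done by first establishing $\|\tilde{u}\|_{\dot{S}^1(I)} \lesssim E$ (which handles the $\|\tilde{u}\|_{X^{s_0}}$ factor once $\delta$ is chosen small against a power of $E$), and then deriving the \emph{coupled} pair
\[
\|w\|_{X^s(I)} \lesssim \epsilon + \|w\|_{X^s(I)}^{\,1+\alpha-s/s_0}\|w\|_{\dot{S}^1(I)}^{\,s/s_0},
\qquad
\|w\|_{\dot{S}^1(I)} \lesssim D + E\,\|w\|_{X^s(I)}^{\,\alpha},
\]
which is closed simultaneously.

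Your second stage is both more complicated than necessary and, as written, can fail. You propose to split $\nabla \sim |\nabla|^{s}\,|\nabla|^{s_0}$ and place $|\nabla|^{s_0}$ on the H\"older-$\alpha$ factor via Proposition~\ref{fract-chain-holder-inhomog}; but that proposition requires the derivative order to be strictly less than $\alpha$, and one checks that $s_0 = 1-s > 1/(1+\alpha) \geq \alpha$ whenever $\alpha \leq (\sqrt{5}-1)/2$, so no admissible choice of $s$ saves this. The paper avoids fractional splitting here entirely: at the $\dot{N}^1$ level one has an honest integer gradient, so the pointwise chain rule $\nabla(|u|^\alpha u) = F_z(u)\nabla u + F_{\bar z}(u)\overline{\nabla u}$ followed by H\"older's inequality in Lorentz spaces yields directly
\[
\big\||x|^{-b}(|u|^\alpha u - |\tilde{u}|^\alpha\tilde{u})\big\|_{L_t^2(I,\dot{W}_x^{1;\frac{2n}{n+2},2})}
\lesssim \|\tilde{u}\|_{\dot{S}^1(I)}\|w\|_{X^0(I)}^{\alpha} + \|u\|_{X^0(I)}^{\alpha}\|w\|_{\dot{S}^1(I)},
\]
which is the second inequality of the coupled system above.
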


\begin{proof}
The overall idea follows Killip and Visan \cite[Lemma 3.13]{KillipVisan13}. The use of Lorentz spaces is inspired by Aloui and Tayachi \cite{AlouiTayachi}.

\vspace{0.6em}
\emph{Part 1.}
We derive some bounds on the magnitude of $u$ and $\tilde{u}$.

\vspace{0.6em}
First, by Strichartz estimates, Proposition \ref{fract-chain-inhomog}, the smallness conditions, and some additional interpolation of spaces, we have
\begin{align*}
\norm{\dot{S}^1(I)}{\tilde{u}}
&\lesssim \norm{L_t^\infty(I,\dot{H}_x^1)} {\tilde{u}}
+ \norm{L_t^{2}(I,\dot{W}_x^{1; \frac{2n}{n+2}, 2})} {|x|^{-b} |\tilde{u}|^\alpha \tilde{u}}
+ \norm{\dot{N}^1(I)}{e} \\
&\lesssim E
+ \norm{L_t^{\frac{2(n+2)}{n-2}}(I, L_x^{\frac{2(n+2)}{n-2}})} {\tilde{u}} ^ \alpha
\norm{L_t^{p_b}(I, \dot{W}_x^{1; {q_b}, 2})} {\tilde{u}}
+ \epsilon \\
&\lesssim E
+ \delta ^ {\alpha c}
\norm{\dot{S}^1(I)} {\tilde{u}} ^ {1 + \alpha (1-c)}
+ \epsilon
\end{align*}
which leads to
\begin{equation}
\label{shortperturb-utilde-S1-bdd}
\norm{\dot{S}^1(I)}{\tilde{u}} \lesssim E
\end{equation}
provided that $\delta = O(1)$ and $\epsilon_0 = O(E)$.

\vspace{0.6em}
Next, by \eqref{strichartz-exotic} and \eqref{nonl-est-exotic-1}, we have
\begin{align*}
\norm{X^{s}(I)} {e^{i(t-t_0)\Delta} \tilde{u}(t_0)}
&\lesssim \norm{X^{s}(I)} {\tilde{u}}
+ \norm{Y^s(I)} {|x|^{-b} |\tilde{u}|^\alpha \tilde{u}}
+ \norm{\dot{N}^1(I)}{e} \\
&\lesssim \delta
+ \norm{X^{0}(I)} {\tilde{u}} ^ \alpha
\norm{X^{s}(I)} {\tilde{u}}
+ \epsilon \\
&\lesssim \delta
+ \delta ^ {1+\alpha}
+ \epsilon
\end{align*}
which leads to
\begin{equation}
\label{shortperturb-p1-eq2}
\norm{X^{s}(I)} {e^{i(t-t_0)\Delta} \tilde{u}(t_0)} \lesssim \delta
\end{equation}
provided that $\delta = O(1)$ and $\epsilon_0 = O(\delta)$.

\vspace{0.6em}
Lastly, by \eqref{shortperturb-small-2} and \eqref{shortperturb-p1-eq2} in addition, we have
\begin{align*}
\norm{X^s(I)}{u}
&\lesssim \norm{X^s(I)} {e^{i(t-t_0)\Delta}u(t_0)}
+ \norm{Y^s(I)} {|x|^{-b} |u|^\alpha u} \\
&\lesssim \delta + \epsilon
+ \norm{X^{0}(I)} {u} ^ \alpha
\norm{X^{s}(I)} {u}
\end{align*}
and thus
\begin{equation}
\label{shortperturb-u-Xs-small}
\norm{X^s(I)}{u} \lesssim \delta
\end{equation}
under the same conditions on $\delta$ and $\epsilon_0$.

\vspace{0.6em}
\emph{Part 2.}
We derive the desired bounds on the magnitude of the solution's approximation error. Let $w = u - \tilde{u}$.
Notice that $w$ is a solution to the equation
\begin{equation*}
\left\{
\begin{aligned}
&iw_t + \Delta w = \mu |x|^{-b} (|u|^\alpha u - |\tilde{u}|^\alpha \tilde{u}) - e, \\
&w(t_0) = u(t_0) - \tilde{u}(t_0).
\end{aligned}
\right.
\end{equation*}
From the equation of $w$, \eqref{nonl-est-exotic-2}, \eqref{shortperturb-small-1}--\eqref{shortperturb-small-3}, and \eqref{shortperturb-utilde-S1-bdd}, we calculate
\begin{align*}
\norm{X^s(I)}{w}
&\lesssim \norm{X^s(I)} {e^{i(t-t_0)\Delta} w(t_0)}
+ \norm{Y^s(I)} {|x|^{-b} (|u|^\alpha u - |\tilde{u}|^\alpha \tilde{u})}
+ \norm{\dot{N}^1(I)} {e} \\
&\lesssim \epsilon
+ \norm{Y^s(I)} {|x|^{-b} (|u|^\alpha u - |\tilde{u}|^\alpha \tilde{u})}
+ \epsilon \\
&\lesssim 2\epsilon
+ \norm{X^s(I)} {w}
\left( \norm{X^0(I)} {\tilde{u}} ^ {\alpha - \frac{s}{s_0}} \norm{X^{s_0}(I)} {\tilde{u}} ^ {\frac{s}{s_0}} +
\norm{X^0(I)} {w} ^ {\alpha - \frac{s}{s_0}} \norm{X^{s_0}(I)} {w} ^ {\frac{s}{s_0}} \right) \\
&\lesssim 2\epsilon
+ \norm{X^s(I)} {w}
\left( \delta ^ {\alpha - \frac{s}{s_0}} E ^ {\frac{s}{s_0}} +
\norm{X^s(I)} {w} ^ {\alpha - \frac{s}{s_0}} \norm{\dot{S}^{1}(I)} {w} ^ {\frac{s}{s_0}} \right)
\end{align*}
and hence
\begin{equation}
\label{shortperturb-w-Xs-est}
\norm{X^s(I)}{w}
\lesssim \epsilon
+ \norm{X^s(I)} {w} ^ {1 + \alpha - \frac{s}{s_0}} \norm{\dot{S}^{1}(I)} {w} ^ {\frac{s}{s_0}}
\end{equation}
provided that $\delta = O(E^{-\frac{s}{\alpha s_0 - s}})$.

\vspace{0.6em}
With \eqref{shortperturb-small-0} in place of \eqref{shortperturb-small-2}, and considering
\eqref{shortperturb-utilde-S1-bdd} and
\eqref{shortperturb-u-Xs-small}, a similar argument indicates that
\begin{align*}
\norm{\dot{S}^1(I)}{w}
&\lesssim \norm{\dot{S}^1(I)} {e^{i(t-t_0)\Delta} w(t_0)}
+ \norm{\dot{N}^1(I)} {|x|^{-b} (|u|^\alpha u - |\tilde{u}|^\alpha \tilde{u})}
+ \norm{\dot{N}^1(I)} {e} \\
&\lesssim D
+ \norm{L_t^2(I, \dot{W}_x^{1; \frac{2n}{n+2}, 2})} {|x|^{-b} (|u|^\alpha u - |\tilde{u}|^\alpha \tilde{u})}
+ \epsilon \\
&\lesssim D + \epsilon
+ \norm{\dot{S}^1(I)} {\tilde{u}} \norm{X^0(I)} {w} ^ {\alpha}
+ \norm{\dot{S}^1(I)} {w} \norm{X^0(I)} {u} ^ {\alpha} \\
&\lesssim D + \epsilon
+ E \norm{X^0(I)} {w} ^ {\alpha}
+ \delta ^ {\alpha} \norm{\dot{S}^1(I)} {w}
\end{align*}
and therefore
\begin{equation}
\label{shortperturb-w-S1-est}
\norm{\dot{S}^1(I)}{w}
\lesssim D
+ E \norm{X^s(I)} {w} ^ {\alpha}.
\end{equation}

\vspace{0.6em}
Combining \eqref{shortperturb-w-Xs-est} and \eqref{shortperturb-w-S1-est}, we prove the two properties \eqref{shortperturb-res-1} and \eqref{shortperturb-res-3}:
\begin{equation*}
\norm{X^s(I)}{w} \lesssim \epsilon \text{\quad and\quad}
\norm{\dot{S}^1(I)}{w} \lesssim D,
\end{equation*}
provided that $\epsilon_0 > 0$ is small enough (with polynomial decay with respect to large $E$ and $D$).
For the remaining three properties,
\eqref{shortperturb-res-2} follows by adding \eqref{shortperturb-utilde-S1-bdd} and \eqref{shortperturb-res-3},
\eqref{shortperturb-res-4} follows from the proof of \eqref{shortperturb-w-Xs-est},
and \eqref{shortperturb-res-5} follows from the proof of \eqref{shortperturb-w-S1-est}.
\end{proof}

\vspace{0.6em}
We finally proceed to state the long-time perturbation theory (i.e., stability theory) for {INLS} with $\alpha \leq 1$.

\begin{lem}[Stability for INLS, $\alpha \leq 1$]~ \label{longperturb}

Assume that $n \geq 4$, $0 < b < 2$ and $0 < \alpha \leq 1$.
Let $I$ be a compact time interval, and
let $\tilde{u} : I \times \mathbb{R}^n \to \mathbb{C}$ be an approximate solution to \eqref{INLS} in the sense that
\begin{equation*}
i\tilde{u}_t + \Delta\tilde{u} = \mu |x|^{-b} |\tilde{u}|^\alpha \tilde{u} + e
\end{equation*}
for some function $e$ representing error.
Assume that
\begin{equation*}
\norm{L_t^\infty(I, \dot{H}_x^1)}{\tilde{u}} \leq E, \quad
\norm{L_{t}^{\frac{2(n+2)}{n-2}}(I,L_{x}^{\frac{2(n+2)}{n-2}})}{\tilde{u}} \leq L
\end{equation*}
for some constants $E, L > 0$.
Let $t_0 \in I$ and assume also that $u(t_0) \in \dot{H}_x^1 (\mathbb{R}^n)$ obeys
\begin{equation}
\label{longperturb-small-1}
\norm{\dot{H}_x^1}{u(t_0) - \tilde{u}(t_0)} \leq D
\end{equation}
for some constant $D>0$.
Lastly, assume also the smallness conditions
\begin{align}
\label{longperturb-small-2.1}
\norm{L_{t}^{\frac{2(n+2)}{n-2}}(I,L_{x}^{\frac{2(n+2)}{n-2},2})} {e^{i(t-t_0)\Delta}(u(t_0) - \tilde{u}(t_0))} &\leq \epsilon, \\
\label{longperturb-small-2.2}
\norm{\dot{N}^1(I)}{e} &\leq \epsilon
\end{align}
for some small constant $0 < \epsilon < \epsilon_1 = \epsilon_1(E,L,D)$.
Then, there exists a unique solution $u : I\times\mathbb{R}^n \to \mathbb{C}$ of \eqref{INLS} with the initial data $u(t_0)$ at time $t = t_0$ satisfying
\begin{align}
\label{longperturb-res-1}
\norm{L_{t}^{\frac{2(n+2)}{n-2}}(I,L_{x}^{\frac{2(n+2)}{n-2}})} {u-\tilde{u}} &\leq C(E,L,D)\epsilon^{\theta}, \\
\label{longperturb-res-2}
\norm{\dot{S}^{1}(I)}{u-\tilde{u}} &\leq C(E,L,D)D, \\
\label{longperturb-res-3}
\norm{\dot{S}^{1}(I)}{u}&\leq C(E,L,D)
\end{align}
for a small absolute constant $\theta=\theta(n,b)\in(0,1)$.

\end{lem}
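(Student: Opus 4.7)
The plan is to upgrade the short-time perturbation estimate of Lemma \ref{shortperturb} to a long-time one by subdividing $I$ into finitely many subintervals and iterating, following the scheme of Killip and Visan \cite[Lemma 3.13]{KillipVisan13}.

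First, I would partition $I$ into consecutive subintervals $I_1, \ldots, I_N$ (with $t_0$ sitting as a common endpoint of two of them; the two sides are treated symmetrically) such that the short-time smallness $\norm{X^s(I_k)}{\tilde u} \leq \delta$ holds on each, where $\delta = \delta(E)$ is the threshold of Lemma \ref{shortperturb}. This is possible because $X^s$ interpolates between $L_t^\infty \dot H^1_x$ (bounded by $E$) and an $L_{t,x}^{2(n+2)/(n-2)}$-type Strichartz space (bounded via $L$): combining the Sobolev embedding $\dot W_x^{s;q,2} \hookrightarrow L_x^{\tilde q, 2}$ with $(p, \tilde q)$ an $\dot H^1$-admissible pair, Lorentz--Lebesgue embeddings, and H\"older in time, one obtains $\norm{X^s(J)}{\tilde u} \lesssim E^{1-\theta_0} \norm{L_{t,x}^{2(n+2)/(n-2)}(J)}{\tilde u}^{\theta_0}$ for some $\theta_0 = \theta_0(n, b) \in (0, 1)$. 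The number of subintervals is therefore $N = N(E, L, \delta) < \infty$, depending polynomially on $L$.

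Next, I would apply Lemma \ref{shortperturb} inductively on $I_k = [t_{k-1}, t_k]$ for $k = 1, \ldots, N$. On $I_1$, the hypotheses \eqref{shortperturb-small-0}--\eqref{shortperturb-small-3} hold with constants $D$ and $\epsilon$, yielding $\norm{X^s(I_1)}{u - \tilde u} \lesssim \epsilon$, $\norm{\dot S^1(I_1)}{u - \tilde u} \lesssim D$, together with \eqref{shortperturb-res-4}--\eqref{shortperturb-res-5}. To pass from $I_k$ to $I_{k+1}$, the initial-datum error is controlled as $\norm{\dot H^1_x}{u(t_k) - \tilde u(t_k)} \lesssim C_*^k D$, and the Strichartz smallness on $I_{k+1}$ is bounded through the Duhamel representation
\begin{equation*}
e^{i(t - t_k)\Delta}(u(t_k) - \tilde u(t_k)) = e^{i(t - t_0)\Delta}(u(t_0) - \tilde u(t_0)) - i \int_{t_0}^{t_k} e^{i(t - \tau)\Delta} \bigl( \mu |x|^{-b}(|u|^\alpha u - |\tilde u|^\alpha \tilde u) - e \bigr)(\tau)\, d\tau,
\end{equation*}
whose $X^s(I_{k+1})$ norm is estimated by \eqref{strichartz-exotic} via the original smallness $\epsilon$, the previous steps' bounds \eqref{shortperturb-res-4} on $\norm{Y^s(I_j)}{|x|^{-b}(|u|^\alpha u - |\tilde u|^\alpha \tilde u)}$, and $\norm{\dot N^1(I)}{e} \leq \epsilon$. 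These accumulate as $C_*^k \epsilon$; choosing $\epsilon_1(E, L, D) := C_*^{-N} \epsilon_0(E, C_*^N D)$ keeps the short-time hypotheses valid at each step. Summing gives $\norm{X^s(I)}{u - \tilde u} \leq C(E, L, D) \epsilon$ and $\norm{\dot S^1(I)}{u - \tilde u} \leq C(E, L, D)$; adding the a priori bound $\norm{\dot S^1(I)}{\tilde u} \lesssim E + L$ (obtained similarly by partitioning and reapplying \eqref{shortperturb-utilde-S1-bdd}) then yields \eqref{longperturb-res-2} and \eqref{longperturb-res-3}.

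The main technical obstacle is the careful bookkeeping of how errors accumulate across subintervals: on each step, the Duhamel contribution from all prior subintervals must be bounded in the $X^s(I_{k+1})$ norm, which produces geometric growth of the constants by a factor $C_*$ per step. The resulting $\epsilon_1$ shrinks exponentially in $N$, but $N$ itself depends only polynomially on $E$ and $L$, so $\epsilon_1 = \epsilon_1(E, L, D)$ remains well-defined. Finally, \eqref{longperturb-res-1} with the power $\theta = \theta(n, b) \in (0, 1)$ follows by interpolating the $X^s$ smallness $\norm{X^s(I)}{u - \tilde u} \lesssim \epsilon$ against the $\dot S^1(I)$ bound \eqref{longperturb-res-2}, bridging the two $\dot H^1$-admissible pairs via Sobolev embedding $\dot W^{s; q, 2} \hookrightarrow L^{\tilde q, 2}$, complex interpolation of Strichartz spaces, and the Lorentz-to-Lebesgue embedding $L^{p, 2} \hookrightarrow L^p$ (valid for $p = 2(n+2)/(n-2) \geq 2$) to strip off the second Lorentz index.
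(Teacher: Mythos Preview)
Your approach is essentially the same as the paper's --- partition so that $\norm{X^s(I_k)}{\tilde u}\le\delta$, iterate Lemma~\ref{shortperturb}, propagate the $X^s$ smallness of the linear evolution through the Duhamel formula via \eqref{strichartz-exotic} and \eqref{shortperturb-res-4}, and sum. The bookkeeping of the geometric growth $C_*^k$ and the choice of $\epsilon_1$ are also right.

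There is, however, one genuine gap. You assert that on $I_1$ the hypotheses \eqref{shortperturb-small-0}--\eqref{shortperturb-small-3} ``hold with constants $D$ and $\epsilon$'', but \eqref{shortperturb-small-2} is a bound on the $X^s$ norm of $e^{i(t-t_0)\Delta}(u(t_0)-\tilde u(t_0))$, whereas the hypothesis \eqref{longperturb-small-2.1} only controls its $L_t^{\frac{2(n+2)}{n-2}}L_x^{\frac{2(n+2)}{n-2},2}$ norm. These are different spaces, and nothing in the hypotheses gives the $X^s$ bound directly. The paper closes this by applying the very interpolation you already used for the partition step to the linear evolution itself, combining \eqref{longperturb-small-2.1} with the Strichartz bound $\norm{\dot S^1(I)}{e^{i(t-t_0)\Delta}(u(t_0)-\tilde u(t_0))}\lesssim D$ to obtain
\[
\norm{X^s(I)}{e^{i(t-t_0)\Delta}(u(t_0)-\tilde u(t_0))}\lesssim \epsilon^{\theta_0}D^{1-\theta_0}\lesssim \epsilon^{\theta_0/2},
\]
after shrinking $\epsilon_1$ so that $\epsilon_1^{\theta_0/2}\le D^{-(1-\theta_0)/\theta_0}\epsilon_1^{\theta_0}$. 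Consequently the effective small parameter fed into the iteration is $\epsilon^{\theta_0/2}$, not $\epsilon$, and your intermediate claim $\norm{X^s(I)}{u-\tilde u}\lesssim\epsilon$ should read $\lesssim\epsilon^{\theta_0/2}$. With this correction the remainder of your argument, including the final reverse interpolation to reach \eqref{longperturb-res-1}, goes through unchanged; the exponent $\theta$ in the conclusion then emerges as a product of the forward and backward interpolation exponents.
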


\begin{proof}
Again, the overall idea comes from Killip and Visan \cite[Proof of Theorem 3.8]{KillipVisan13}, with the same additional inspiration from Aloui and Tayachi \cite{AlouiTayachi} for Lorentz spaces.

\vspace{0.6em}
\emph{Part 1.}
We first show the boundedness of $\tilde{u}$ in $\dot{S}^1(\mathbb{R}^n)$.

\vspace{0.6em}
Let $0 < \eta \ll L$ be a small constant chosen later. We partition $I$ into $J_0 = J_0(L, \eta)$ subintervals $I_j = [t_j, t_{j+1}]$ ($j = 0, 1, \cdots, J_0-1$) such that
\begin{equation*}
\sup_j \norm{L_{t}^{\frac{2(n+2)}{n-2}}(I_j, L_{x}^{\frac{2(n+2)}{n-2}})} {\tilde{u}} \leq \eta.
\end{equation*}
Then by Strichartz estimates, Proposition \ref{fract-chain-inhomog}, and \eqref{longperturb-small-2.2}, we have
\begin{align*}
\norm{\dot{S}^1(I_j)}{\tilde{u}}
&\lesssim \norm{L_t^\infty(I_j,\dot{H}_x^1)} {\tilde{u}}
+ \norm{L_t^{2}(I_j,\dot{W}_x^{1; \frac{2n}{n+2}, 2})} {|x|^{-b} |\tilde{u}|^\alpha \tilde{u}}
+ \norm{\dot{N}^1(I_j)}{e} \\
&\lesssim E
+ \norm{L_t^{\frac{2(n+2)}{n-2}}(I_j, L_x^{\frac{2(n+2)}{n-2}})} {\tilde{u}} ^ \alpha
\norm{L_t^{p_b}(I_j, \dot{W}_x^{1; {q_b}, 2})} {\tilde{u}}
+ \epsilon \\
&\lesssim E
+ \epsilon
+ \eta ^ {\alpha}
\norm{\dot{S}^1(I)} {\tilde{u}}
\end{align*}
and therefore
$
\norm{\dot{S}^1(I_j)}{\tilde{u}} \lesssim E
$
for every $j$. Since $J_0 \sim (L/\eta)^c$, the summation in $j$ leads to
\begin{equation}
\label{longperturb-pf-utilde-S1}
\norm{\dot{S}^1(I)}{\tilde{u}} \lesssim E(L/\eta)^{c}
\end{equation}
provided that $\eta = O(1)$ and $\epsilon_1 = O(E)$.
In particular, by Sobolev embedding,
\begin{equation*}
\norm{X^s(I)}{\tilde{u}} \lesssim \norm{\dot{S}^1(I)}{\tilde{u}} \lesssim EL^c.
\end{equation*}

\vspace{0.6em}
\emph{Part 2.}
We would like to make some preparations before iterating the short-time perturbation theory, Lemma \ref{shortperturb}.
The hypotheses $\norm{L_t^\infty(I, \dot{H}_x^1)}{\tilde{u}} \leq E$ and $\norm{\dot{N}^1(I)}{e} \leq \epsilon$ are shared by both Lemma \ref{shortperturb} and this theorem.
For the smallness condition \eqref{shortperturb-small-1}, we let $\delta = \delta(E) > 0$ as defined in Lemma \ref{shortperturb} and then partition $I$ into $J_1 = J_1(E, L)$ subintervals $I_j = [t_j, t_{j+1}]$ ($j = 0,1,\cdots, J_1-1$) so that
\begin{equation*}
\sup_j \norm{X^s(I_j)} {\tilde{u}} \leq \delta.
\end{equation*}

Lastly, we see the following smallness condition, which is reminiscent of \eqref{shortperturb-small-2} and derived from \eqref{longperturb-small-1}, \eqref{longperturb-small-2.1}, Strichartz estimates, and interpolation.
This condition is used later in Duhamel's formula for $e^{i(t-t_j)\Delta} \left( u(t_j) - \tilde{u}(t_j) \right)$.
\begin{equation}
\begin{aligned}
\label{longperturb-Xs-interpol}
&\:
\norm{X^s(I)} {e^{i(t-t_0)\Delta}(u(t_0) - \tilde{u}(t_0))} \\
\lesssim&\:
\norm{L_{t}^{\frac{2(n+2)}{n-2}}(I,L_{x}^{\frac{2(n+2)}{n-2},2})} {e^{i(t-t_0)\Delta} (u(t_0) - \tilde{u}(t_0))} ^ {\theta}
\norm{\dot{S}^1(I)} {e^{i(t-t_0)\Delta} (u(t_0) - \tilde{u}(t_0))} ^ {1 - \theta} \\
\lesssim&\:
\epsilon^{\theta} D^{1-\theta}
\lesssim
\epsilon^{\theta/2}
\end{aligned}
\end{equation}
Since $2<p<\infty$, by using Sobolev embedding when necessary, it is always possible to find the appropriate  constant $0<\theta<1$ under the assumption $\epsilon_1 = O(D^{-2(1-\theta)/\theta})$.

\vspace{0.6em}
\emph{Part 3.}
Let $w = u - \tilde u$.
Choose $\epsilon_1 = \epsilon_1(E,L,D)$ so that $\epsilon_1^{\theta/2} \ll \epsilon_0(E, C(J_1)D)$.
Then, we may apply Lemma \ref{shortperturb} for every $j = 0, \cdots, J_1-1$ to obtain
\begin{equation}
\label{longperturb-iterresult}
\begin{aligned}
\norm{X^{s}(I_j)} {w} &\lesssim C(j) \epsilon^{\theta/2}, \\
\norm{\dot{S}^1(I_j)} {w} &\lesssim C(j) D, \\
\norm{\dot{S}^1(I_j)} {u} &\lesssim C(j) (E+D), \\
\norm{Y^{s}(I_j)} {|x|^{-b}(|u|^\alpha u - |\tilde{u}|^\alpha \tilde{u})} &\lesssim C(j) \epsilon^{\theta/2}, \\
\norm{\dot{N}^1(I_j)} {|x|^{-b}(|u|^\alpha u - |\tilde{u}|^\alpha \tilde{u})} &\leq C(j) D
\end{aligned}
\end{equation}
once we show that
\begin{equation}
\label{longperturb-keyscheme}
\norm{\dot{H}_x^1}{w(t_j)} \leq C(j)D, \quad
\norm{X^{s}(I_j)} {e^{i(t-t_j)\Delta}w(t_j)} \leq C(j) \epsilon^{\theta/2}
\end{equation}
for every $j$.
To Duhamel's formula for $e^{i(t-t_j)\Delta} w(t_j)$, which is
\begin{equation*}
e^{i(t-t_j)\Delta}w(t_j) =
e^{i(t-t_0)\Delta}w(t_0) - i\int_{t_0}^{t_j} e^{i(t-\tau)\Delta} \big( \mu|x|^{-b}(|u|^\alpha u - |\tilde u|^\alpha \tilde u) - e \big)(\tau) \,d\tau,
\end{equation*}
we apply \eqref{strichartz-exotic} to derive the estimate
\begin{align*}
\norm{X^s(I_j)} {e^{i(t-t_j)\Delta}w(t_j)}
&\lesssim
\norm{X^s(I_j)} {e^{i(t-t_0)\Delta}w(t_0)} +
\norm{Y^s([t_0, t_j])} {|x|^{-b}(|u|^\alpha u - |\tilde u|^\alpha \tilde u)} +
\norm{\dot{N}^1(I_j)} {e}
\\ &\lesssim
\epsilon^{\theta/2} + \sum_{k=0}^{j-1} C(k) \epsilon^{\theta/2} + \epsilon,
\end{align*}
and similarly, we also obtain
\[
\norm{\dot{H}_x^1} {w(t_j)}
\lesssim
D + \sum_{k=0}^{j-1} C(k) D + \epsilon.
\]
Therefore, taking $\displaystyle C(j) \sim 1 + \sum_{k=0}^{j-1} C(k)$ and assuming $\epsilon_1 = O(\langle D\rangle^{-1})$ in addition, we obtain \eqref{longperturb-keyscheme}. Summation of \eqref{longperturb-iterresult} over $j$ and the interpolation in the reverse direction of \eqref{longperturb-Xs-interpol}, we obtain \eqref{longperturb-res-1}--\eqref{longperturb-res-3}.

\end{proof}

\subsection{Variational analysis} \label{Subsection:2.3}

Now that the stability theory has been established for every $n \geq 3$ and $0<b<\min(2,n/2)$,
we finish the section with the variational analysis.
The argument is highly similar to \cite{GuzmanXu24.6}, and the only difference is that it holds for every such $n$ and $b$. Thus, we only provide statements without proof.

\vspace{0.6em}
It is well known by Yanagida \cite[Remark 2.1]{Yanagida} that
the ground state
\[
Q(x) = \left(1 + \frac{|x|^{2-b}}{(n-b)(n-2)} \right)^{-\frac{n-2}{2-b}}
\]
is the positive, smooth, and radially decreasing solution of the elliptic equation $ \Delta Q + |x|^{-b}Q^{\alpha + 1} = 0 $ which is unique up to scaling.
In addition, $Q$ attains the optimal constant $C_1$ for the embedding estimate
\[
\int_{\mathbb{R}^n} |x|^{-b} |u|^{\alpha+2} \,dx \leq C_1 \left( \int_{\mathbb{R}^n} |\nabla u|^{2} \,dx \right)^{\frac{\alpha+2}{2}}
\]
and also satisfies
\[
C_1^{-\frac{2}{\alpha}}
= \int_{\mathbb{R}^n} |\nabla Q|^{2} \,dx = \int_{\mathbb{R}^n} |x|^{-b} |Q|^{\alpha+2} \,dx
= \frac{2\pi^{\frac{n}{2}}((n-2)(n-b))^{\frac{n-b}{2-b}} \Gamma \Big(\dfrac{n-b}{2-b}\Big)^2} {(2-b)\Gamma\Big(\dfrac{n}{2}\Big)\Gamma \Big(\dfrac{2(n-b)}{2-b}\Big)}.
\]

\begin{lem}[Energy trapping and coercivity]~

Suppose that a solution $u : I \times \mathbb{R}^n \to \mathbb{C}$ of \eqref{INLS} obeys the subthreshold condition
\[
E[\phi] < (1-\delta_0) E[Q] \text{\quad and\quad} \norm{L_x^2}{\nabla \phi} < \norm{L_x^2}{\nabla Q}.
\]
Then there exists $\delta = \delta(n, b, \delta_0) > 0$ such that
\[
\sup_{t\in I} \norm{L_x^2}{\nabla u(t)} < (1-\delta) \norm{L_x^2}{\nabla Q}.
\]
Furthermore, we have the comparison
\[
E[u(t)] \sim \norm{L_x^2}{\nabla u(t)}^2 \sim \norm{L_x^2}{\nabla \phi}^2
\text{\quad uniformly in $t\in I$,}
\]
where the implicit constants depend only on $n$ and $b$.
Lastly, there exists $\delta' = \delta'(n, b, \delta_0) > 0$ such that
\[
\int_{\mathbb{R}^n} |\nabla u(t)|^2-|x|^{-b}|u(t)|^{\alpha+2} \,dx
\geq
\delta' \int_{\mathbb{R}^n} |\nabla u(t)|^2\,dx.
\]
\end{lem}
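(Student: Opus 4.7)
The plan is to run a standard variational argument based on the sharp Gagliardo-Nirenberg-type embedding listed just above the statement. Define $y(t) := \|\nabla u(t)\|_{L_x^2}^2$ and the one-variable function
\[
f(y) := \tfrac{1}{2} y - \tfrac{C_1}{\alpha + 2} y^{\frac{\alpha+2}{2}}.
\]
From the embedding $\int |x|^{-b} |u|^{\alpha+2} \,dx \leq C_1 \|\nabla u\|_{L_x^2}^{\alpha+2}$, the focusing energy satisfies $E[u(t)] \geq f(y(t))$, and we always have $E[u(t)] \leq \tfrac{1}{2} y(t)$ since the potential term is nonpositive. A direct calculation shows that $f$ is strictly increasing on $[0, y_c]$ and strictly decreasing on $[y_c, \infty)$, where $y_c = C_1^{-2/\alpha} = \|\nabla Q\|_{L_x^2}^2$, and that $f(y_c) = \bigl(\tfrac{1}{2} - \tfrac{1}{\alpha+2}\bigr) y_c = E[Q]$ by the explicit identity for $Q$.

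First I would establish the uniform kinetic energy bound. By energy conservation, $f(y(t)) \leq E[u(t)] = E[\phi] < (1-\delta_0) E[Q] = (1-\delta_0) f(y_c)$ for all $t \in I$. Since $y(t)$ depends continuously on $t$ (because $u \in C_t \dot H_x^1$), $y(0) < y_c$, and $f < (1-\delta_0) f(y_c)$ on $\{y \leq y_c\}$ only away from $y_c$, a continuity/IVT argument forces $y(t)$ to stay in the connected component $[0, y_*]$ where $y_*$ is the unique point in $(0, y_c)$ with $f(y_*) = (1-\delta_0) f(y_c)$. Setting $\delta := 1 - \sqrt{y_*/y_c}$ yields $\sup_t \|\nabla u(t)\|_{L_x^2} \leq (1-\delta) \|\nabla Q\|_{L_x^2}$, and $\delta$ depends only on $n, b, \delta_0$ since the same holds for $C_1$ and $y_c$.

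Next I would deduce the comparison $E[u(t)] \sim \|\nabla u(t)\|_{L_x^2}^2 \sim \|\nabla \phi\|_{L_x^2}^2$. The upper bound $E[u(t)] \leq \tfrac{1}{2} y(t)$ is immediate from focusing-sign. For the lower bound, the uniform trapping just proved gives $y(t) \leq (1-\delta)^2 y_c$, so
\[
E[u(t)] \geq f(y(t)) = y(t) \Bigl( \tfrac{1}{2} - \tfrac{C_1}{\alpha+2} y(t)^{\alpha/2} \Bigr) \geq y(t) \Bigl( \tfrac{1}{2} - \tfrac{(1-\delta)^\alpha}{\alpha+2} \Bigr),
\]
and the parenthesized constant is strictly positive in $\delta$. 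Energy conservation then transfers the comparison from $y(t)$ to $y(0) = \|\nabla \phi\|_{L_x^2}^2$.

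Finally, for the coercivity of the Pohozaev-type functional, the same bookkeeping applies: using the sharp embedding once more,
\[
\int |\nabla u(t)|^2 - |x|^{-b}|u(t)|^{\alpha+2}\,dx \geq y(t) - C_1 y(t)^{\frac{\alpha+2}{2}} = y(t)\bigl( 1 - (y(t)/y_c)^{\alpha/2}\bigr) \geq \bigl(1 - (1-\delta)^\alpha\bigr) y(t),
\]
which is the claimed inequality with $\delta' := 1 - (1-\delta)^\alpha > 0$. The only genuine step is the trapping argument, i.e.\ ruling out that $y(t)$ crosses $y_c$; this is a textbook continuity-plus-conservation argument, so I do not anticipate any real obstacle beyond keeping track of the dependence of $\delta$ on $\delta_0$.
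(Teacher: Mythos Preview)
Your argument is correct and is precisely the standard variational proof; the paper itself omits the proof entirely, referring to \cite{GuzmanXu24.6}. One small point: the lower bound you derive, $E[u(t)] \geq y(t)\bigl(\tfrac12 - \tfrac{(1-\delta)^\alpha}{\alpha+2}\bigr)$, carries a $\delta_0$-dependent constant, whereas the lemma asserts comparison constants depending only on $n$ and $b$. This is trivially repaired by using only $y(t) \leq y_c$ rather than $y(t) \leq (1-\delta)^2 y_c$, which yields $E[u(t)] \geq f(y(t)) \geq \tfrac{\alpha}{2(\alpha+2)}\, y(t)$ with a constant depending only on $\alpha = \alpha(n,b)$.
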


\newpage
\section{Construction of a minimal energy blow-up solution} \label{Section:3}

In this section, our main goal is to construct a \emph{minimal energy blow-up solution} (also called a \emph{critical solution}) with a compactness property in the sense of Proposition \ref{compactness-of-orbit}.
The road map mostly follows Guzman and Murphy \cite{GuzmanMurphy21} and its generalization Guzman and Xu \cite{GuzmanXu24.6}.
It is based on the concentration--compactness argument of Keng and Merle \cite{KenigMerle}
with an additional technique, \cite[Proposition 3.3]{GuzmanMurphy21}, which prevents the blow-up profiles from escaping from the origin too quickly.
However, some intermediate norm analyses need Lorentz spaces for controlling the harder cases, especially $\alpha < 1$.
To avoid duplication, we provide an overall sketch of the concentration compactness arguments most of the time, and detailed computations only for necessary places.


\vspace{0.6em}
We begin with the linear profile decomposition developed by Keraani \cite{Keraani01}.
See also Koch, Tataru, and Visan \cite[pp. 223--250]{KochTataruVisan14}.
For completeness, we write down the following proposition despite its similarity to \cite{GuzmanMurphy21} and \cite{GuzmanXu24.6}
After some direct computations, it is possible to verify that the proposition holds also for the values of $(n, b)$ which have not been covered in \cite{GuzmanXu24.6}.

\begin{prop}[Linear profile decomposition] \label{linear-prof-decomp}~

Let $n\geq 3$, $0<b<\min(2,n/2)$.
Let $u_m$ be a bounded sequence in $\dot{H}^1(\mathbb{R}^n)$.
Then there exist $J^* \in \mathbb{N} \cup \{\infty\}$,
profiles $0 \ne \phi^{j} \in \dot{H}^1(\mathbb{R}^n)$,
scaling parameters $0 < \lambda_m^j < \infty$,
space translation parameters $x_m^j \in \mathbb{R}^n$,
time translation parameters $t_m^j \in \mathbb{R}$,
and the remainder terms $w_m^J$ such that
\begin{equation*}
u_m = \sum_{j=1}^{J} g_m^j[e^{i t_m^j \Delta} \phi^j] + w_m^J
\end{equation*}
for $1\leq J \leq J^*$, where the energy-critical symmetry operator $g_m^j : \dot{H}^1(\mathbb{R}^n) \to \dot{H}^1(\mathbb{R}^n)$ is defined as
\begin{equation*}
(g_m^j f)(x) = (\lambda_m^j)^{-\frac{n-2}{2}} f\Big( \frac{x-x_m^j}{\lambda_m^j} \Big).
\end{equation*}
The decomposition additionally satisfies
\begin{itemize}
\item[(i)] Energy decoupling: for every $1\leq J \leq J^*$, we have
\begin{align*}
\lim_{m\to\infty} \Big( K(u_m) - \sum_{j=1}^{J} K(\phi^j) - K(w_m^J) \Big) &= 0, \\
\lim_{m\to\infty} \Big( P(u_m) - \sum_{j=1}^{J} P(\phi^j) - P(w_m^J) \Big) &= 0
\end{align*}
where we write the ``kinetic energy'' $K(f)$ and ``potential energy'' $P(f)$ as follows respectively.
\begin{equation*}
K(f) = \int_{\mathbb{R}^n} |\nabla f|^2 dx, \quad
-P(f) = \int_{\mathbb{R}^n} |x|^{-b}|f|^{\alpha + 2} dx
\end{equation*}

\item[(ii)]
Asymptotic (strong) vanishing of the remainder:
\begin{equation*}
\lim_{J\to J^*} \limsup_{m\to\infty} \norm{L_{t}^{\frac{2(n+2)}{n-2}}(\mathbb{R}, L_{x}^{\frac{2(n+2)}{n-2}})} {e^{it\Delta} w_m^J} = 0.
\end{equation*}

\item[(iii)]
(Asymptotic) Weak vanishing of the remainder: For every $1 \leq j \leq J < \infty$,
\[
e^{-i{t_m^j}\Delta} \big[ (g_m^j)^{-1} w_m^J \big]
\xrightharpoonup{\quad\!} 0
\text{\quad weakly in $\dot{H}_x^1(\mathbb{R}^n)$}.
\]

\item[(iv)]
Asymptotic orthogonality: for every $j \ne j'$, we have
\begin{equation*}
\lim_{m\to\infty} \Big( \frac{\lambda_m^j}{\lambda_m^{j'}} + \frac{\lambda_m^{j'}}{\lambda_m^j} + \frac{|x_m^j - x_m^{j'}|^2}{\lambda_m^j \lambda_m^{j'}} + \frac{|t_m^j(\lambda_m^j)^2 - t_m^{j'}(\lambda_m^{j'})^2|}{\lambda_m^j \lambda_m^{j'}} \Big) = \infty.
\end{equation*}
\end{itemize}
In addition, we may assume either $t_m^j \equiv 0$ or $t_m^j \to \pm\infty$ and that either $x_m^j \equiv 0$ or $|x_m^j| \to \infty$.

\end{prop}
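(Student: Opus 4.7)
The plan is to follow the iterative Keraani-type construction from \cite{Keraani01}, as adapted to the INLS setting in \cite{GuzmanMurphy21,GuzmanXu24.6}, and to check that the argument extends to the full range $n \geq 3$, $0 < b < \min(2, n/2)$ by systematic use of the Lorentz-based Strichartz estimates of Proposition \ref{strichartz}.

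The first step I would carry out is an \emph{inverse Strichartz inequality}: for a bounded sequence $(u_m) \subset \dot{H}^1(\mathbb{R}^n)$ with
\[
\liminf_{m\to\infty} \bigl\| e^{it\Delta}u_m \bigr\|_{L^{2(n+2)/(n-2)}_{t,x}(\mathbb{R}\times\mathbb{R}^n)} > 0,
\]
one extracts parameters $\lambda_m > 0$, $x_m \in \mathbb{R}^n$, $t_m \in \mathbb{R}$ and a nonzero profile $\phi \in \dot{H}^1$ so that $e^{-it_m\Delta}[(g_m)^{-1}u_m] \rightharpoonup \phi$ weakly in $\dot{H}^1$. The key input is a refined Sobolev embedding of the schematic form
\[
\bigl\|e^{it\Delta}f\bigr\|_{L^{2(n+2)/(n-2)}_{t,x}} \lesssim \|f\|_{\dot{H}^1}^{\theta}\,\|f\|_{\dot{B}^{1-n/2}_{\infty,\infty}}^{1-\theta},
\]
which, combined with a Littlewood--Paley frequency and position localization of the Besov norm, pinpoints the parameters and yields quantitative control on $\|\phi\|_{\dot{H}^1}$ from below.

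With this extraction lemma in place, I would iterate: set $w_m^0 = u_m$ and at step $J$ apply the extraction to $w_m^{J-1}$ to obtain a profile $\phi^J$ with parameters $(\lambda_m^J, x_m^J, t_m^J)$, defining
\[
w_m^J = u_m - \sum_{j=1}^{J} g_m^j \bigl[e^{it_m^j\Delta}\phi^j\bigr].
\]
The weak vanishing (iii) is automatic from the construction. Asymptotic orthogonality (iv) is then forced by contradiction: if two parameter tuples failed to diverge in the required sense, the corresponding profiles could be merged, contradicting the weak-limit normalization used in the earlier extraction. Passing to a diagonal subsequence, one also normalizes each $t_m^j$ to be either identically zero or divergent to $\pm\infty$, and analogously for $x_m^j$.

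The energy decoupling (i) splits into a kinetic piece, which is the Pythagorean identity in $\dot{H}^1$ applied to the weak convergence, and a potential piece, which is a Brezis--Lieb-type statement for the weighted functional $\int |x|^{-b}|u|^{\alpha+2}\,dx$ combined with the orthogonality (iv); the strong vanishing (ii) then follows from (i) and the refined Sobolev embedding, since kinetic decoupling bounds the highest-frequency remainder piece uniformly in $J$ and forces the critical $L^{2(n+2)/(n-2)}_{t,x}$ norm to vanish as $J \to J^*$. The main obstacle is the singular weight $|x|^{-b}$ in the potential decoupling: the regime $|x_m^j|/\lambda_m^j \to \infty$ drives the profile's potential contribution to zero outright, while an intermediate regime with $|x_m^j|/\lambda_m^j$ bounded but nonzero must be absorbed into the $x_m^j \equiv 0$ alternative by a harmless recentering of the profile. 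This is precisely where the Lorentz-space versions of H\"{o}lder's inequality and Sobolev embedding from Subsection \ref{Subsection:2.1} are indispensable, since they extend the argument of \cite{GuzmanXu24.6} beyond its restriction on $(n,b)$ and handle the low-nonlinearity regime $\alpha \leq 1$ uniformly.
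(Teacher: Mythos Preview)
Your sketch is correct and matches the paper's approach: the paper provides no detailed proof of this proposition, merely citing Keraani \cite{Keraani01} and \cite{KochTataruVisan14} and remarking that the argument of \cite{GuzmanMurphy21,GuzmanXu24.6} extends to the full range of $(n,b)$ after direct computation; your inverse-Strichartz iteration is exactly that scheme. One minor overstatement: Lorentz spaces are not really indispensable for the potential-energy decoupling in this proposition---the bound $\int |x|^{-b}|f|^{\alpha+2}\,dx \lesssim \|\nabla f\|_{L^2}^{\alpha+2}$ already holds for all $0<b<2$ via the standard Hardy--Sobolev embedding, and the paper's substantive reliance on Lorentz spaces appears only later, in the stability theory (Lemmas \ref{longperturb..a>1}--\ref{longperturb}) and in Proposition \ref{scattering-step-2}.
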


\vspace{0.6em}
We next visit the technique \cite[Proposition 3.3]{GuzmanMurphy21} for Guzman and Murphy and \cite[Proposition 3.2]{GuzmanXu24.6} for Guzman and Xu.
They prevent the space-translational escape of blow-up profiles in Proposition \ref{palaissmale}, which has not occurred under the radial data assumption.
They also provide a smooth approximation which enables the asymptotic orthogonality.
The authors have utilized the vanishing of $|x|^{-b}$ at infinity to address the problem in INLS.
Here, we utilize Lorentz spaces to push their techniques further to hold for every $n\geq 3$ and $0<b<\min(2,n/2)$.
Aside from the overall sketch, only the significantly altered parts from the proof in \cite{GuzmanMurphy21} and \cite{GuzmanXu24.6} will be pointed out.

\vspace{0.6em}
Before writing down the statement,
given the spatial and scaling parameter sequences ($x_m \in \mathbb{R}^n$ and $0 < \lambda_m < \infty$),
we define the operator $g_m : \dot{H}^1(\mathbb{R}^n) \to \dot{H}^1(\mathbb{R}^n)$ by
\begin{equation*}
g_m f(x) = (\lambda_m)^{-\frac{n-2}{2}} f\Big( \frac{x-x_m}{\lambda_m} \Big)
\end{equation*}
which is reminiscent of $g_m^j$ in Proposition \ref{linear-prof-decomp}.
Note by direct calculation that
$
g_m e^{it\Delta} = e^{i\lambda_m^2 t\Delta} g_m
$.

\begin{prop} \label{scattering-step-2}~

Suppose that $x_m \in \mathbb{R}^n$, $0 < \lambda_m < \infty$, and $t_m \in \mathbb{R}$ satisfy
\begin{equation*}
\lim_{m\to\infty} \frac{|x_m|}{\lambda_m} = \infty, \text{\quad and either  }
t_m \equiv 0 \text{  or \ } t_m \to \pm\infty.
\end{equation*}
Let $\phi \in \dot{H}^1(\mathbb{R}^n)$ and define
\begin{equation*}
\phi_m(x) = g_m(e^{it_m\Delta}\phi)(x) = \lambda_m^{-\frac{n-2}{2}} (e^{it_m\Delta}\phi) \Big(\frac{x - x_m}{\lambda_m}\Big)
\end{equation*}
Then, for $m$ large enough, there exists a global solution $v_m$ to \eqref{INLS} with the initial data $v_m(0) = \phi_m$ satisfying
\begin{equation}
\label{prevent-disloc-by-inhomog-res-1}
\norm{\dot{S}^1(\mathbb{R})} {v_m} \lesssim 1
\end{equation}
where the implicit constant depends only on $E_c$ and $\norm{\dot{H}_x^1}{\phi}$.

In addition, let $(p,q)$ be any $L^2$-admissible pair with $p < \infty$.
Then for every $\epsilon > 0$, there exists $m^* \in \mathbb{N}$ and $\psi \in C_0^\infty(\mathbb{R} \times \mathbb{R}^n)$ such that for every $m \geq m^*$,
\begin{equation}
\label{prevent-disloc-by-inhomog-res-2}
\norm{L_{t}^{p}(\mathbb{R}, \dot{W}_{x}^{1; q, 2})} {\lambda_m^{\frac{n-2}{2}} v_m\big( \lambda_m^2(t-t_m), \lambda_m x + x_m \big) - \psi} < \epsilon.
\end{equation}
\end{prop}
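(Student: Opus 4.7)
The plan is to combine the stability lemmas from Section~\ref{Section:2} with the decay/dispersive properties of the free Schr\"odinger flow, so as to force the rescaled INLS solution close to a smooth compactly supported approximant.

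\textbf{Reduction to a shifted INLS.} The natural candidate for the quantity appearing in \eqref{prevent-disloc-by-inhomog-res-2} is
\[
U_m(t,x) := \lambda_m^{\frac{n-2}{2}}\, v_m\bigl(\lambda_m^{2}(t-t_m),\, \lambda_m x + x_m\bigr),
\]
which must satisfy the spatially shifted INLS
\[
i\partial_t U_m + \Delta U_m = -|x + y_m|^{-b}|U_m|^{\alpha} U_m, \qquad U_m(t_m) = e^{it_m\Delta}\phi,
\]
with $y_m := x_m/\lambda_m$, so $|y_m|\to\infty$. Since the $\dot S^1$-norm is scaling invariant, it suffices to construct $U_m$ globally with $\|U_m\|_{\dot S^1(\mathbb R)}\lesssim 1$. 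Fix $\eta>0$ small and choose $\tilde\phi\in C_c^\infty(\mathbb R^n)$ with $\|\phi - \tilde\phi\|_{\dot H^1}<\eta$; take as approximate solution $\tilde U_m(t,x):=e^{it\Delta}\tilde\phi(x)$, which is independent of $m$, satisfies $\|\tilde U_m\|_{\dot S^1(\mathbb R)}\lesssim\|\phi\|_{\dot H^1}$, and solves the shifted INLS up to the forcing term $e_m(t,x):=|x+y_m|^{-b}|\tilde U_m|^{\alpha}\tilde U_m$. The discrepancy of initial data is $e^{it_m\Delta}(\phi-\tilde\phi)$, which is of size $\eta$ in $\dot H^1$ and, by Strichartz, also in the norms required by Lemma~\ref{longperturb..a>1} (when $\alpha\geq 1$) or Lemma~\ref{longperturb} (when $\alpha\leq 1$). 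These two lemmas together cover the whole range $n\geq 3$, $0<b<\min(2,n/2)$, so one of them is always applicable.

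\textbf{Smallness of the forcing.} The main technical step is $\|e_m\|_{\dot N^1(\mathbb R)}\to 0$ as $m\to\infty$. After the spatial change of variable $x\mapsto x - y_m$, which preserves Strichartz norms, this equals $\||x|^{-b}|w_m|^{\alpha}w_m\|_{\dot N^1(\mathbb R)}$, where $w_m(t,x):=e^{it\Delta}[\tilde\phi(\cdot-y_m)](x)$ is the free evolution of a bump centred at $y_m$. I would split space as $\{|x|\leq |y_m|/2\}\cup\{|x|>|y_m|/2\}$. On the outer region $|x|^{-b}\lesssim|y_m|^{-b}$ is uniformly small, so the weighted fractional chain rules of Propositions~\ref{fract-chain-inhomog}--\ref{fract-chain-holder-inhomog}, followed by Strichartz, produce a bound of order $|y_m|^{-b}\|\tilde\phi\|_{\dot H^1}^{1+\alpha}\to 0$. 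On the inner region $w_m$ is the free evolution evaluated far from its centre, so the Schwartz decay of $\tilde\phi$ (by stationary phase for bounded time) together with the dispersive estimate $\|e^{it\Delta}\tilde\phi\|_{L^\infty_x}\lesssim|t|^{-n/2}\|\tilde\phi\|_{L^1}$ (for large time) yields pointwise smallness of $w_m$ on this region, which then upgrades through the Lorentz-weighted chain rules to smallness in $\dot N^1$. I expect carrying the two-region decomposition through the H\"older nonlinearity chain rule when $\alpha<1$ to be the most delicate piece, since the loss of a classical derivative on $|u|^\alpha u$ forces one to balance the two interpolation exponents $s,s_0$ of Proposition~\ref{fract-chain-holder-inhomog} against the geometric decay of $|x|^{-b}$ on each region.

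\textbf{Stability and approximation by $\psi$.} Once the forcing is shown to be small, Lemma~\ref{longperturb..a>1} or Lemma~\ref{longperturb} provides a global solution $U_m$ with $\|U_m-\tilde U_m\|_{\dot S^1(\mathbb R)}\to 0$ as $m\to\infty$, yielding $\|U_m\|_{\dot S^1(\mathbb R)}\lesssim 1$ with implicit constant depending only on $\|\phi\|_{\dot H^1}$ and $E_c$, which is \eqref{prevent-disloc-by-inhomog-res-1}. Letting $\eta\to 0$ and then $m\to\infty$, one has $U_m\to e^{it\Delta}\phi$ in $\dot S^1(\mathbb R)$, and hence in $L_t^p\dot W_x^{1;q,2}(\mathbb R)$ for every $L^2$-admissible $(p,q)$ with $p<\infty$. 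Given $\epsilon>0$, the density of $C_c^\infty(\mathbb R\times\mathbb R^n)$ in this Strichartz space lets us fix $\psi$ within $\epsilon/2$ of $e^{it\Delta}\phi$; picking $m^*$ large enough that $\|U_m-e^{it\Delta}\phi\|_{L_t^p\dot W_x^{1;q,2}}<\epsilon/2$ for $m\geq m^*$ gives \eqref{prevent-disloc-by-inhomog-res-2}.
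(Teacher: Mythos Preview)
Your approach shares the paper's underlying idea---run stability theory against the free evolution, exploiting the spatial separation between the weight singularity and the translated profile---but is organized quite differently. The paper builds its approximant out of three ingredients: a frequency projector $P_m$ onto $|\xi|\in[|y_m|^{-\theta},|y_m|^{\theta}]$, a physical cutoff $\chi_m$ that excises the ball $\{|x+y_m|\lesssim|y_m|\}$, and a time partition $I_{m,T}^\circ\cup I_{m,T}^{\pm}$. On the bounded interval $I_{m,T}^\circ$ the error is placed in $L^\infty_t$ (paying $T^{1/2}$) and closed by Bernstein gains from $P_m$, yielding a negative power of $|y_m|$; on $I_{m,T}^{\pm}$ the approximant is purely linear and the full nonlinearity is small by the monotone-convergence decay of linear Strichartz norms. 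You instead take no cutoff, rely on $\tilde\phi\in C_c^\infty$ for spare regularity, and attempt a single spatial split at $|x|=|y_m|/2$ valid for all times.

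There is a concrete gap in your outer-region step. You claim the weighted chain rules give a bound of order $|y_m|^{-b}\|\tilde\phi\|_{\dot H^1}^{1+\alpha}$, but this cannot close at the $\dot H^1$-critical scaling: the factor $|x|^{-b}$ is precisely what makes the nonlinearity critical, so replacing it wholesale by the constant $|y_m|^{-b}$ leaves the unweighted quantity $\nabla(|w_m|^\alpha w_m)$, whose dual-Strichartz norm does \emph{not} bound by $\|\tilde\phi\|_{\dot H^1}^{1+\alpha}$ (H\"older misses by exactly $b/n$ in the exponent count). The paper's fix is to trade only a sliver of the weight, effectively writing $|x|^{-b}\lesssim|y_m|^{-n\epsilon}|x|^{-(b-n\epsilon)}$ on the support of $\chi_m$, and then repaying the resulting integrability shift with Bernstein gains from $P_m$. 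Your $C_c^\infty$ hypothesis on $\tilde\phi$ could in principle substitute for the Bernstein step, but you would have to say which non-critical norms of $\tilde\phi$ you are spending and verify the exponent algebra.

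Your inner-region sketch is also incomplete in a way that naturally leads back to the paper's structure. The pointwise smallness you invoke---stationary phase for bounded $t$, dispersive decay for large $t$---is correct, but the crossover scale depends on $t$ versus $|y_m|$, and converting it into $\dot N^1$ smallness (with the gradient landing on the singular weight, and the H\"older chain rule when $\alpha<1$) effectively forces a time partition. The paper's physical cutoff $\chi_m$ sidesteps this entirely by making the approximant vanish near the singularity throughout the finite window $I_{m,T}^\circ$, leaving the singular region to be handled only on $I_{m,T}^{\pm}$ where dispersion has already kicked in. Your density argument for \eqref{prevent-disloc-by-inhomog-res-2} is fine and matches what the paper defers to \cite{GuzmanMurphy21}.
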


\begin{proof}
The overall road map is similar to the proof of \cite[Proposition 3.2]{GuzmanXu24.6}.

\vspace{0.6em}
Let $0 < \theta < 1$ be a small constant to choose later.
We define a smooth frequency cutoff as the Littlewood-Paley operator
\begin{equation*}
P_m = P_{\left|\frac{x_m}{\lambda_m}\right|^{-\theta} \leq \cdot \leq \left|\frac{x_m}{\lambda_m}\right|^{\theta}},
\end{equation*}
and also define a smooth physical cutoff $\chi_m: \mathbb{R}^n \to [0, 1]$ which satisfies
\begin{equation*}
\chi_m(x) =
\begin{cases}
1 & \text{if } \Big|x + \dfrac{x_m}{\lambda_m}\Big| \geq \dfrac{1}{2} \Big|\dfrac{x_m}{\lambda_m}\Big|, \\[1ex]
0 & \text{if } \Big|x + \dfrac{x_m}{\lambda_m}\Big| \leq \dfrac{1}{4} \Big|\dfrac{x_m}{\lambda_m}\Big|
\end{cases}
\end{equation*}
and also for every multiindex $\iota \in (\mathbb{N} \cup \{0\})^n$ the bound condition
\begin{equation*}
\norm{L_x^\infty}{\partial^{\iota} \chi_m} \leq C(\iota) \Big|\frac{x_m}{\lambda_m}\Big|^{-|\iota|}.
\end{equation*}
From the hypothesis, we note that $P_m \to {\rm id}$ as an operator and $\chi_m \to 1$ pointwise as $m \to \infty$.

\vspace{0.6em}
Let $a_{m,T}^- = (-t_m - T)\lambda_m^2$ and $a_{m,T}^+ = (-t_m + T)\lambda_m^2$.
We partition the whole time interval $\mathbb{R}$ into three pieces;
$I_{m,T}^\circ = [a_{m,T}^-, a_{m,T}^+]$,
$I_{m,T}^- = (-\infty, a_{m,T}^-]$,
and $I_{m,T}^+ = [a_{m,T}^+, \infty)$.
We then define an approximate solution $\tilde{v}_{m,T}$ by
\begin{equation*}
\tilde{v}_{m,T}(t,x) =
\begin{cases}
g_m \big[ \chi_m P_m e^{i(t \lambda_m^{-2} + t_m)\Delta} \phi \big](x)
= \chi_m\Big( \dfrac{x-x_m}{\lambda_m} \Big) e^{i(t + t_m \lambda_m^2)\Delta} g_m [P_m \phi](x)
& \text{if } t \in I_{m,T}^\circ, \\[1ex]
e^{i(t-a_{m,T}^+)\Delta} \tilde{v}_{m,T}(a_{m,T}^+)
& \text{if } t \in I_{m,T}^+, \\[1ex]
e^{i(t-a_{m,T}^-)\Delta} \tilde{v}_{m,T}(a_{m,T}^-)
& \text{if } t \in I_{m,T}^-.
\end{cases}
\end{equation*}
We note that $\tilde{v}_{m,T}$ serves as an approximant to the true solution $v_m$ of \eqref{INLS} with the initial data $\tilde{v}_{m,T}(0) = \phi_m = g_m e^{it_m\Delta} \phi$.

\vspace{0.6em}
To prove \eqref{prevent-disloc-by-inhomog-res-1}, it suffices to check the following three conditions to apply the stability theory;
\begin{gather}
\label{scattering-nodisloc-approxsol-1}
\limsup_{T\to\infty} \limsup_{m\to\infty}
\norm{\dot{S}^1(\mathbb{R})} {\tilde{v}_{m,T}} \lesssim 1, \\
\label{scattering-nodisloc-approxsol-2}
\limsup_{T\to\infty} \limsup_{m\to\infty}
\norm{\dot{H}_x^1} {\tilde{v}_{m,T}(0) - \phi_m} = 0, \\
\label{scattering-nodisloc-approxsol-3}
\limsup_{T\to\infty} \limsup_{m\to\infty}
\norm{\dot{N}^1(\mathbb{R})} {e_{m,T}} = 0
\end{gather}
where
$ e_{m,T} = (i\partial_t + \Delta)\tilde{v}_{m,T} + |x|^{-b} |\tilde{v}_{m,T}|^\alpha \tilde{v}_{m,T} $
is the approximation error.
We note that the conditions \eqref{scattering-nodisloc-approxsol-1}, \eqref{scattering-nodisloc-approxsol-2}, and \eqref{scattering-nodisloc-approxsol-3} correspond to Conditions 1, 2, and 3 in the proof of \cite[Proposition 3.2]{GuzmanXu24.6}.
Since \eqref{scattering-nodisloc-approxsol-1} and
\eqref{scattering-nodisloc-approxsol-2} can be proved without much modification from the arguments in \cite{GuzmanXu24.6}, we focus on \eqref{scattering-nodisloc-approxsol-3}.

\vspace{0.6em}
We decompose $e_{m,T}$ as the sum of the linear part $e_{m,T}^{\rm l} = (i\partial_t + \Delta)\tilde{v}_{m,T}$ and the nonlinear part $e_{m,T}^{\rm nl} = |x|^{-b} |\tilde{v}_{m,T}|^\alpha \tilde{v}_{m,T}$.
We perform the estimation on each of the three subintervals $I_{m,T}^\circ$, $I_{m,T}^+$, and $I_{m,T}^-$, but whichever interval we consider, the control of $e_{m,T}^{\rm l}$ is relatively straightforward by the arguments in \cite{GuzmanXu24.6}. (In particular, $e_{m,T}^{\rm l} \equiv 0$ on $I_{m,T}^\pm$.)
Thus, we only consider $e_{m,T}^{\rm nl}$.

\vspace{0.6em}
Consider the time interval $I_{m,T}^\circ$.
Then, we can write
\[
e_{m,T}^{\rm nl} =
\lambda_m^{-(2-b)} g_m\left[|\lambda_m x + x_m|^{-b} \chi_m^{\alpha+1} |\Phi_m|^\alpha \Phi_m \right]
\]
where $\Phi_m = P_m e^{i(t \lambda_m^{-2} + t_m)\Delta} \phi$. This yields
\begin{align*}
&\; \norm{L_t^2(I_{m,T}^\circ; L_x^{\frac{2n}{n+2}, 2})} {\nabla e_{m,T}^{\rm nl}} \\
\lesssim&\;
\lambda_m^b T^{\frac{1}{2}}
\norm{L_t^\infty(I_{m,T}^\circ; L_x^{\frac{2n}{n+2}, 2})} {\nabla \big(|\lambda_m x + x_m|^{-b} \chi_m^{\alpha+1} |\Phi_m|^\alpha \Phi_m \big)} \\
\lesssim&\;
\lambda_m^b T^{\frac{1}{2}}
\norm{L_t^\infty(I_{m,T}^\circ; L_x^{\frac{1}{\frac{b+1}{n} - \epsilon}, \infty})} {\nabla \big(|\lambda_m x + x_m|^{-b} \chi_m^{\alpha+1} \big)}
\norm{L_t^\infty(I_{m,T}^\circ; L_x^{\frac{1}{\frac{n+2-2b}{2n} + \epsilon}, 2})} {\nabla \big(|\Phi_m|^\alpha \Phi_m \big)} \\
\lesssim&\;
\lambda_m^b T^{\frac{1}{2}}
\cdot \lambda_m^{-b+\epsilon n} |x_m|^{-\epsilon n} \cdot
\norm{L_t^\infty(I_{m,T}^\circ; L_x^{\frac{2(n+2)}{n-2}})} {\Phi_m}^\alpha
\norm{L_t^\infty(I_{m,T}^\circ; \dot{W}_x^{1; \frac{1}{\frac{n^2+4-4b}{2n(n+2)} + \epsilon}, 2})} {\Phi_m} \\
\lesssim&\;
T^{\frac{1}{2}}
\Big|\frac{x_m}{\lambda_m}\Big|^{-\epsilon n + \theta \left( \frac{n-2}{n+2}\alpha + \frac{n-2+2b}{n+2} - \epsilon n \right)}
\norm{\dot{H}_x^1} {\phi}^{\alpha+1}
\end{align*}
where the constant $\epsilon = \epsilon(n,b) > 0$ can be fixed small. We then choose $\theta = \theta(n, b)$ so that
\[
{-\epsilon n + \theta \left( \frac{n-2}{n+2}\alpha + \frac{n-2+2b}{n+2} - \epsilon n \right)} < 0.
\]
Therefore by the hypothesis $|x_m|/\lambda_m \to \infty$, we obtain
\[
\limsup_{T\to\infty} \limsup_{m\to\infty}
\norm{L_t^2(I_{m,T}^\circ; L_x^{\frac{2n}{n+2}, 2})} {\nabla e_{m,T}^{\rm nl}} = 0.
\]

On the interval $I_{m,T}^+$ ($I_{m,T}^-$ can be handled similarly.), we simply write
\[
e_{m,T} =
e_{m,T}^{\rm nl} =
|x|^{-b} |V_{m,T}|^\alpha V_{m,T}
\]
where $V_{m,T} = e^{i(t - a_{m,T}^+)\Delta} \tilde{v}_{m,T}(a_{m,T}^+)$.
By H\"older's inequality, Strichartz estimates, and \eqref{scattering-nodisloc-approxsol-1},
we have
\begin{align*}
&\; \norm{L_t^2(I_{m,T}^+; L_x^{\frac{2n}{n+2}, 2})} {\nabla e_{m,T}^{\rm nl}} \\
\lesssim&\;
\norm{L_t^{\frac{2(n+2)}{n-2}} (I_{m,T}^+; L_x^{\frac{2(n+2)}{n-2}})} {V_{m,T}}^\alpha
\norm{L_t^{\frac{2(n+2)}{n-2+2b}}(I_{m,T}^+; \dot{W}_x^{1; \frac{1}{\frac{n^2+4-4b}{2n(n+2)}}, 2})} {V_{m,T}} \\
\lesssim&\;
C(\norm{\dot{H}_x^1} {\phi}, E_c)
\norm{L_t^{\frac{2(n+2)}{n-2}} ([0,\infty); L_x^{\frac{2(n+2)}{n-2}})} {e^{it\Delta} \tilde{v}_{m,T}(a_{m,T}^+)}^\alpha.
\end{align*}
We observe that ${e^{it\Delta} \tilde{v}_{m,T}(a_{m,T}^+)} \to 0$ in ${L_t^{\frac{2(n+2)}{n-2}} ([0,\infty); L_x^{\frac{2(n+2)}{n-2}})}$ by dominated convergence as in \cite{GuzmanXu24.6},
finishing the proof of \eqref{prevent-disloc-by-inhomog-res-1}.

\vspace{0.6em}
We omit the proof of \eqref{prevent-disloc-by-inhomog-res-2} as we can borrow the idea from \cite{GuzmanMurphy21}.
Unlike \cite{GuzmanXu24.6}, we need a better regularity condition on \eqref{prevent-disloc-by-inhomog-res-2} to make use of the asymptotic orthogonality condition for $\alpha \leq 1$.

\end{proof}


We are now ready to explore the critical solution's compactness properties, which starts with establishing the Palais-Smale Condition.

\begin{prop} [Palais-Smale Condition] \label{palaissmale} ~

Let $u_m : I_m \times \mathbb{R}^n \to \mathbb{C}$ denote a sequence of maximal-lifespan solutions to \eqref{INLS} such that
\begin{align}
\label{palaissmale-H1-bddness}
& \lim_{m\to\infty} \norm{L_t^\infty (I_m, L_x^2)} {\nabla u_m}^2 = E_c < \norm{L_x^2}{\nabla W} ^2, \\
& \lim_{m\to\infty} \norm{L_t^{\frac{2(n+2)}{n-2}}([t_m,+\infty) \cap I_m; L_x^{\frac{2(n+2)}{n-2}} )} {u_m}
= \lim_{m\to\infty} \norm{L_t^{\frac{2(n+2)}{n-2}}((-\infty,t_m] \cap I_m; L_x^{\frac{2(n+2)}{n-2}})} {u_m}
= \infty.
\end{align}
Then there exist $\lambda_m \in (0,\infty)$ such that $\left\{ \lambda_m^{-\frac{n-2}{2}} u_m \Big(t_m, \dfrac{x}{\lambda_m}\Big) \right\}$ is precompact in $\dot{H}_x^1(\mathbb{R}^n)$.

\end{prop}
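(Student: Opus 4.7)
The plan is to argue by contradiction using the linear profile decomposition (Proposition \ref{linear-prof-decomp}), an induction on energy built on the minimality of $E_c$, and the stability theory of Subsection \ref{Subsection:2.2}. After time translation I may assume $t_m \equiv 0$. Since $\{u_m(0)\}$ is bounded in $\dot{H}^1$, Proposition \ref{linear-prof-decomp} yields profiles $\phi^j$ with parameters $(\lambda_m^j, x_m^j, t_m^j)$ and a remainder $w_m^J$. The kinetic-energy decoupling together with the hypothesis $E_c < \norm{L_x^2}{\nabla Q}^2$ places every profile and the remainder in the subthreshold regime of Subsection \ref{Subsection:2.3}, so every nonlinear object built below stays in the coercivity window.

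Next I would construct a nonlinear profile $v^j$ for each $\phi^j$, distinguishing two cases according to the asymptotic behaviour of $|x_m^j|/\lambda_m^j$. When this ratio stays bounded, one extracts further to obtain $x_m^j \equiv 0$ and defines $v^j$ either as the \eqref{INLS}-solution with initial data $\phi^j$ (when $t_m^j \equiv 0$), or, when $t_m^j \to \pm\infty$, as the solution obtained through the small-data scattering part of Theorem \ref{thrm:2.5-well-posedness} so that $\norm{\dot{H}_x^1}{v^j(-t_m^j) - e^{it_m^j \Delta} \phi^j} \to 0$. When $|x_m^j|/\lambda_m^j \to \infty$, Proposition \ref{scattering-step-2} directly furnishes a global solution $v_m^j$ with $\norm{\dot{S}^1(\mathbb{R})}{v_m^j} \lesssim 1$ together with the smooth, compactly supported approximation needed for asymptotic orthogonality at the nonlinear level.

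Then I would form the superposition
\[
\tilde{u}_m^J(t) = \sum_{j=1}^J v_m^j(t) + e^{it\Delta} w_m^J
\]
(with the translations and scalings absorbed into each $v_m^j$) and verify the three hypotheses of the stability theory — Lemma \ref{longperturb..a>1} when $\alpha \geq 1$, Lemma \ref{longperturb} when $\alpha \leq 1$. These amount to a uniform $\dot{S}^1$ and scattering-norm bound on $\tilde{u}_m^J$ (from energy decoupling and pairwise asymptotic orthogonality of parameters), the agreement $\norm{\dot{H}_x^1}{\tilde{u}_m^J(0) - u_m(0)} \to 0$ (by construction together with the asymptotic vanishing of the remainder), and smallness of the error $(i\partial_t + \Delta)\tilde{u}_m^J + |x|^{-b}|\tilde{u}_m^J|^\alpha \tilde{u}_m^J$, which reduces to showing that all cross-products among distinct $v_m^j$ vanish in the relevant dual space as $m \to \infty$. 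If at least two profiles were nontrivial, each would carry energy strictly less than $E_c$, so by the minimality of $E_c$ each nonlinear profile already has finite scattering norm, and the stability theory would give a uniform bound on $\norm{L_{t,x}^{2(n+2)/(n-2)}}{u_m}$, contradicting the blow-up hypothesis. Hence $J^{\ast}=1$; the same argument forces $\norm{\dot{H}_x^1}{w_m^1} \to 0$ (otherwise the single profile would have energy strictly less than $E_c$); and the assumption of blow-up in \emph{both} time directions rules out $t_m^1 \to \pm\infty$ and $|x_m^1|/\lambda_m^1 \to \infty$, leaving $t_m^1 \equiv 0$ and $x_m^1 \equiv 0$. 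Setting $\lambda_m = \lambda_m^1$ then delivers the claimed precompactness.

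The main obstacle is the verification of the error smallness when $\alpha \leq 1$, because the cross-term estimates must be carried out in the Lorentz-Sobolev spaces $X^s(I)$ and $Y^s(I)$ rather than in the conventional Strichartz spaces, and the low regularity of the nonlinearity makes the standard H\"older-plus-pairing argument delicate. Here the smooth compactly supported approximation from the second part of Proposition \ref{scattering-step-2}, combined with the full asymptotic orthogonality of parameters in Proposition \ref{linear-prof-decomp}(iv), is what converts parameter separation into the quantitative space-time decoupling of the nonlinear cross-terms needed in these exotic spaces.
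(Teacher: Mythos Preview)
Your overall architecture matches the paper: time translate to $t_m\equiv 0$, apply Proposition~\ref{linear-prof-decomp}, build nonlinear profiles (via Proposition~\ref{scattering-step-2} when $|x_m^j|/\lambda_m^j\to\infty$, via wave operators otherwise), superpose, and invoke stability. The reduction to a single profile, however, diverges from the paper in a way that leaves a gap.

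You argue in the Kenig--Merle style: if at least two profiles coexist, decoupling forces each to carry ``energy strictly less than $E_c$'', so by minimality each scatters and stability then bounds the scattering norm of $u_m$. The paper instead follows Killip--Visan \cite{KillipVisan10}: it first proves that at least one profile must be \emph{bad} (Claim~\ref{clm:badprofile}), and then uses a kinetic energy decoupling for the nonlinear approximant $u_m^J$ \emph{along the flow} (Claim~\ref{clm:KEdecouple-umJ}) together with the definition of $E_c$ to squeeze out all other profiles and force $w_m^J\to 0$. The difference is not cosmetic. In the statement of the proposition $E_c$ is defined as $\lim_m \norm{L_t^\infty(I_m,L_x^2)}{\nabla u_m}^2$, a supremum of kinetic energy, \emph{not} a conserved quantity. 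Your decoupling only controls $\norm{L_x^2}{\nabla\phi^j}^2$ at a single time, and the coercivity you invoke gives $\sup_t \norm{L_x^2}{\nabla v^j(t)}^2 \lesssim \norm{L_x^2}{\nabla\phi^j}^2$ with an implicit constant that need not be~$1$; hence you cannot conclude $\sup_t \norm{L_x^2}{\nabla v^j(t)}^2 < E_c$ and the inductive hypothesis does not apply. The paper's two Claims are precisely the device that converts single-time decoupling into the required supremum-in-time control. You could repair your argument either by inserting these two claims, or by reformulating the entire minimality setup in terms of the conserved energy $E[\cdot]$ (the original Kenig--Merle convention), but as written the step ``each would carry energy strictly less than $E_c$, so by the minimality of $E_c$ each nonlinear profile already has finite scattering norm'' is not justified.

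A smaller remark: the error $e_m^J$ fed into the stability lemmas is measured in $\dot{N}^1$ (concretely $L_t^2 \dot{W}_x^{1;\frac{2n}{n+2},2}$), not in the exotic space $Y^s$. The $X^s,Y^s$ machinery lives \emph{inside} the proof of Lemma~\ref{longperturb} and is not where the cross-term vanishing needs to be verified; compare the paper's estimates \eqref{nonlest-asympt.1}--\eqref{nonlest-asympt.2}, which are carried out in $L_t^2 L_x^{\frac{2n}{n+2-2b},2}$ and via interpolation through $L_{t,x}^{\frac{n+2}{n-1}}$.
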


\begin{proof}
The overall road map is similar to the proof of \cite[Proposition 3.3]{GuzmanXu24.6}, but the existence of a bad profile is borrowed from \cite{KillipVisan10} as the higher-dimensional analogy.

\vspace{0.6em}
Assume $t_m \equiv 0$ by time translation, then
\[
\lim_{m\to\infty} \norm{L_t^{\frac{2(n+2)}{n-2}} ([0,\infty) \cap I_m, L_x^{\frac{2(n+2)}{n-2}} )} {u_m}
= \lim_{m\to\infty} \norm{L_t^{\frac{2(n+2)}{n-2}} ((-\infty,0]\cap I_m, L_x^{\frac{2(n+2)}{n-2}} )} {u_m}
= \infty.
\]
Since $\displaystyle \sup_m \norm{\dot{H}_x^1} {u_m(0)}^2 < \infty$ by \eqref{palaissmale-H1-bddness}, by passing to a subsequence in $m$, we can apply the linear profile decomposition of Proposition \ref{linear-prof-decomp} to $u_m(0)$ by
\begin{equation*}
u_m(0) = \sum_{j=1}^{J} g_m^j[e^{i t_m^j \Delta} \phi^j] + w_m^J.
\end{equation*}

If $\displaystyle \limsup_{m\to\infty} \Big|\frac{x_m^j}{\lambda_m^j}\Big| = \infty$,
then we apply Proposition \ref{scattering-step-2}.
In this case, passing to a subsequence in $m$, we define $v_m^j$ as the global solution to \eqref{INLS} with the initial data
$v_m^j(0) = g_m^j(e^{it_m^j \Delta} \phi^j)$, which satisfies $\norm{L_t^{\frac{2(n+2)}{n-2}} (\mathbb{R}, L_x^{\frac{2(n+2)}{n-2}} )} {v_m^j} < \infty$.
On the other hand,
if $\displaystyle \limsup_{m\to\infty} \Big|\frac{x_m^j}{\lambda_m^j}\Big| < \infty$,
then we may assume $x_m^j \equiv 0$ (passing to a subsequence in $m$ again and translating $\phi^j$ when necessary)
and construct the nonlinear profiles $v^j$ similar to \cite[Definition 2.12]{KenigMerle}, associated to $(\phi^j, \{t_m^j\})$ and satisfying
$
\lim\limits_{m\to\infty} \norm{\dot{H}_x^1} {v^j(t_m^j) - e^{it_m^j \Delta} \phi^j} = 0.
$
In this case, we define $\displaystyle v_m^j(t,x) = \lambda_m^{-\frac{n-2}{2}} v^j\Big(\frac{t}{(\lambda_m^j)^2}+t_m^j, \frac{x}{\lambda_m^j}\Big)$, whose initial data becomes $v_m^j(0) = g_m^j(v^j(t_m^j))$.
With the profiles $v_m^j$ settled, we define the approximate solution $u_m^J$ of $u_m$ by
\begin{equation}
\label{def:approxsol-umJ}
u_m^J = \sum_{j=1}^{J} v_m^j + e^{it\Delta} w_m^J.
\end{equation}

\vspace{0.6em}
Due to the energy decoupling of the linear profile decomposition (See Proposition \ref{linear-prof-decomp}, (i)), there exist $J_0 \in \mathbb{N}$ such that for every $j \geq J_0$ (i.e. when $j$ is large), $\norm{\dot{H}_x^1} {\phi^j}$ is small enough so that the solution $v_m^j$ is globally well-posed and satisfies
\[
\sup_m \bigg( \norm{L_t^\infty(\mathbb{R}, \dot{H}_x^1)} {v_m^j}
+ \norm{L_t^{\frac{2(n+2)}{n-2}}(\mathbb{R}, L_x^{\frac{2(n+2)}{n-2}})} {v_m^j} \bigg)
\lesssim \norm{\dot{H}_x^1} {\phi^j}.
\]
Here, we propose the following claim.

\begin{clm} [At least one bad profile] \label{clm:badprofile} ~

There exists $1\leq j_0 < J_0$ such that
\[
\limsup_{m\to\infty} \norm{L_t^{\frac{2(n+2)}{n-2}} ([0,\infty) \cap I_m^{j_0}, L_x^{\frac{2(n+2)}{n-2}} )} {v_m^{j_0}} = \infty.
\]
\end{clm}

Suppose that $\displaystyle \sup_{1\leq j<J_0} \limsup_{m\to\infty} \norm{L_t^{\frac{2(n+2)}{n-2}} ([0,\infty), L_x^{\frac{2(n+2)}{n-2}} )} {v_m^j} < \infty$ (i.e. $\displaystyle \sup I_m^{j_0} = \infty$) on the contrary.
We aim to show the following four conditions of the approximate solution $u_m^J$;
\begin{align}
\label{pf-badprofile-o/w-1}
\sup_{1\leq j \leq J_0} \limsup_{m\to\infty} \norm{\dot{S}^1([0,\infty))} {v_m^j} &< \infty,
\\
\label{pf-badprofile-o/w-2}
\sup_{1\leq j \leq J^*} \limsup_{m\to\infty} \norm{\dot{H}_x^1} {u_m(0) - u_m^J(0)} &= 0,
\\
\label{pf-badprofile-o/w-3}
\limsup_{J \to J^*} \limsup_{m\to\infty} \norm{L_t^{\frac{2(n+2)}{n-2}}([0,\infty); L_x^{\frac{2(n+2)}{n-2}})} {u_m^J} &\leq C(J_0, E_c) < \infty,
\\
\label{pf-badprofile-o/w-4}
\lim_{J\to J^*} \limsup_{m\to\infty} \norm{L_t^2([0,\infty), \dot{W}_x^{1;\frac{2n}{n+2},2})} {e_m^J} &= 0
\end{align}
where $e_m^J = (i\partial_t + \Delta)u_m^J - \mu |x|^{-b} |u_m^J|^\alpha u_m^J$.
Then, we observe by the stability theory that
\[
\norm{L_t^{\frac{2(n+2)}{n-2}} ([0,\infty), L_x^{\frac{2(n+2)}{n-2}})} {u_m} \lesssim C(J_0, E_c) < \infty,
\]
which is a desired contradiction.

\vspace{0.6em}
\eqref{pf-badprofile-o/w-1}, \eqref{pf-badprofile-o/w-2}, and \eqref{pf-badprofile-o/w-3} can be shown similarly as in Killip and Visan \cite{KillipVisan10},
so it remains to prove \eqref{pf-badprofile-o/w-4}.
It suffices to show the two nonlinearity estimates
\begin{align}
\label{nonlest-asympt.1}
\lim_{J\to J^*} \limsup_{m\to\infty}
\norm{L_t^2([0,\infty), L_x^{\frac{2n}{n+2-2b}, 2})}
{\nabla\Big( \sum_{j=1}^{J} F(v_m^j) - F\Big(\sum_{j=1}^{J} v_m^j\Big) \Big)}
&= 0, \\
\label{nonlest-asympt.2}
\lim_{J\to J^*} \limsup_{m\to\infty}
\norm{L_t^2([0,\infty), L_x^{\frac{2n}{n+2-2b}, 2})}
{\nabla\Big( F\Big(\sum_{j=1}^{J} v_m^j\Big) - F(u_m^J) \Big)}
&= 0
\end{align}
where $F(z) = |z|^\alpha z$.
\eqref{nonlest-asympt.1} follows from the asymptotic orthogonality and the sequential smooth approximation \eqref{prevent-disloc-by-inhomog-res-2}.
Unlike \cite[Proposition 3.2]{GuzmanXu24.6}, the high-regularity convergence in \eqref{prevent-disloc-by-inhomog-res-2} is essential to cope with the case $\alpha \leq 1$, since we only have the pointwise estimate
\[
\left| {\nabla\Big( \sum_{j=1}^{J} F(v_m^J) - F\Big(\sum_{j=1}^{J} v_m^J\Big) \Big)} \right|
\lesssim \sum_{j\ne j'} |\nabla v_m^j| |v_m^{j'}|^{\alpha}
\]
and it can happen that $|\nabla v_m^j|$ has to be estimated while $\displaystyle \limsup_{m\to\infty} |{x_m^j}/{\lambda_m^j}| = \infty$.

\vspace{0.6em}
For \eqref{nonlest-asympt.2}, it is enough to show the following cross-term estimate, which follows from \eqref{pf-badprofile-o/w-3} and Strichartz estimate on $\nabla w_m^J$ under the assumption that $\theta=\theta(n,b) \in (0,1)$ is a sufficiently small absolute constant.
\begin{align*}
&\quad \norm{L_t^2([0,\infty), L_x^{\frac{2n}{n+2-2b}, 2})}
{|u_m^J|^\alpha \nabla e^{it\Delta} w_m^J} \\
&\lesssim \norm{L_t^{\frac{n+2}{n-1}}([0,\infty), L_x^{\frac{n+2}{n-1}})}
{u_m^J \nabla e^{it\Delta} w_m^J} ^ \theta
\norm{L_t^{\frac{2(n+2)}{n-2}} ([0,\infty), L_x^{\frac{2(n+2)}{n-2}})}
{u_m^J} ^ {\alpha - \theta}
\norm{L_t^{\frac{2(n+2)}{n-\frac{2(1-b)}{1-\theta}}}([0,\infty), L_x^{\frac{2n(n+2)}{n^2+\frac{4(1-b)}{1-\theta}}, 2})}
{\nabla e^{it\Delta} w_m^J} ^ {1-\theta} \\
&\lesssim \norm{L_t^{\frac{n+2}{n-1}}([0,\infty), L_x^{\frac{n+2}{n-1}})}
{u_m^J \nabla e^{it\Delta} w_m^J} ^ \theta \\
&\lesssim
\norm{L_t^{\frac{n+2}{n-1}}([0,\infty), L_x^{\frac{n+2}{n-1}})}
{\Big(\sum_{j=1}^{J} v_m^j\Big) \nabla e^{it\Delta} w_m^J} ^ \theta +
\norm{L_t^{\frac{n+2}{n-1}}([0,\infty), L_x^{\frac{n+2}{n-1}})}
{e^{it\Delta} w_m^J \nabla e^{it\Delta} w_m^J} ^ \theta
\end{align*}
The remaining steps for showing \eqref{pf-badprofile-o/w-4} follow similarly from the proof of \cite[Lemma 3.2]{KillipVisan10}.
This finishes the proof of Claim \ref{clm:badprofile}.

\vspace{0.6em}
Going back to the proof of Proposition {\ref{palaissmale}},
we may rearrange the indices and assume that there exists $1 \leq J_1 \leq J_0$ such that
\begin{align*}
\limsup_{m \to \infty} \norm{L_t^{\frac{2(n+2)}{n-2}} ([0,\infty) \cap I_m^{j}, L_x^{\frac{2(n+2)}{n-2}} )} {v_m^{j}} &= \infty   \text{\quad for $1\leq j \leq J_1$}, \\
\limsup_{m \to \infty} \norm{L_t^{\frac{2(n+2)}{n-2}} ([0,\infty) \cap I_m^{j}, L_x^{\frac{2(n+2)}{n-2}} )} {v_m^{j}} &< \infty   \text{\quad for $j > J_1$}.
\end{align*}
For each $m,k \in \mathbb{N}$, define an integer $j_{m,k}^\circ \in \{1,2,\cdots,J_1\}$ and an interval $I_m^k$ of the form $[0,T]$ so that
\[
\sup_{1\leq j\leq J_1} \norm{L_t^{\frac{2(n+2)}{n-2}} (I_m^k, L_x^{\frac{2(n+2)}{n-2}} )} {v_m^{j}}
=	\norm{L_t^{\frac{2(n+2)}{n-2}} (I_m^k, L_x^{\frac{2(n+2)}{n-2}} )} {v_m^{j_{m,k}^\circ}}
=	k.
\]
Then, there should exist $j_1 \in \mathbb{N}$ such that $j_{m,k}^\circ = j_1$ for infinitely many $m$ by the pigeonhole principle.
We may rearrange the indices and assume that $j_1 = 1$, so that
\[
\limsup_{m,k \to \infty} \norm{L_t^{\frac{2(n+2)}{n-2}} (I_m^k, L_x^{\frac{2(n+2)}{n-2}} )} {v_m^{1}} = \infty,
\]
and hence, by the definition of $E_c$,
\begin{equation}
\label{Dvm1-seq-Ec}
\limsup_{m,k \to \infty} \sup_{t \in I_m^k} \norm{L_x^2} {\nabla v_m^{1}(t)} ^2 \geq E_c.
\end{equation}

Meanwhile, we notice that all $v_m^j$ have finite scattering sizes on $I_m^k$ for each $k \in \mathbb{N}$ and hence
\begin{equation}
\label{umJ-um-approx-locintime}
\lim_{J\to J^*} \limsup_{m\to\infty}
\norm{L_t^\infty(I_m^k, \dot{H}_x^1)} {u_m^J - u_m} = 0
\end{equation}
for each $k \in \mathbb{N}$ by revisiting the proof of Claim \ref{clm:badprofile}.

\vspace{0.6em}
We would like to recall the kinetic energy decoupling of the approximate solution $u_m^J$,
which is very similar to \cite[Lemma 3.3]{KillipVisan10}. The weak vanishing of the remainder (Proposition \ref{linear-prof-decomp}), (iii) is essential for the proof.
The readers can also see other sources such as \cite[Lemma 5.10]{KillipVisan13}, \cite[p. 246]{KochTataruVisan14}, and \cite[Lemma 3.6]{GuzmanXu24.6}.

\begin{clm} [Kinetic energy decoupling for $u_m^J$] \label{clm:KEdecouple-umJ} ~
\[
\limsup_{m\to\infty} \sup_{t\in I_m^k}
\left| \norm{L_x^2}{\nabla u_m^J(t)}^2 - \sum_{j=1}^{J} \norm{L_x^2}{\nabla v_m^j(t)}^2 - \norm{L_x^2}{\nabla w_m^J}^2 \right|
= 0.
\]
for every $J, k \in \mathbb{N}$.
\end{clm}

Taking all of \eqref{palaissmale-H1-bddness}, \eqref{Dvm1-seq-Ec}, \eqref{umJ-um-approx-locintime}, and Claim \ref{clm:KEdecouple-umJ} into consideration, we see that
\begin{align*}
\limsup_{m\to\infty} \sup_{t\in I_m^k}
\left( \sum_{j=1}^{J} \norm{L_x^2}{\nabla v_m^j(t)}^2 + \norm{L_x^2}{\nabla w_m^J}^2 \right)
&=	\limsup_{m\to\infty} \sup_{t\in I_m^k}
\norm{L_x^2}{\nabla u_m^J(t)}^2 \\
&\leq	E_c
\leq	\limsup_{m,k \to \infty} \sup_{t \in I_m^k} \norm{L_x^2} {\nabla v_m^{1}(t)} ^2
\end{align*}
and hence conclude that $J_1 = 1$ is the only possibility, with $v_m^j \equiv 0$ for every $j \geq 2$ and $w_m^J \to 0$ strongly in $\dot{H}^1(\mathbb{R}^n)$. We rewrite the linear profile decomposition of $u_m(0)$ as
\[
u_m(0) = g_m^1[e^{i t_m^1 \Delta} \phi^1] + w_m^1.
\]

\vspace{0.6em}
As the last step, the $\dot{H}^1(\mathbb{R}^n)$ compactness of the sequence $u_m(0)$ is proved by claiming that $(t_m^1, x_m^1) \equiv (0, 0)$ is the only possibility and therefore $u_m(0) \to g_m^1[\phi^1]$ strongly in $\dot{H}^1(\mathbb{R}^n)$.
For a rough sketch, $|x_m|/{\lambda_m} \to \infty$ is precluded by Proposition \ref{scattering-step-2} and $t_m^1 \to \pm \infty$ is precluded by the monotone convergence, Strichartz estimates, and the vanishing of the remainder.
\end{proof}

We now refine the Palais-Smale condition for the compactness of the minimal energy blow-up solution.
See also \cite[Propositions 4.1, 4.2]{KenigMerle} and \cite[Theorem 1.16]{KillipVisan10}.

\begin{prop}[Minimal energy blow-up solution] \label{compactness-of-orbit} ~

Suppose the failure of Theorem \ref{mainthm}.
Then there is a critical value $0 < E_c < \norm{L^2}{\nabla W}^2$ and a forward-in-time maximal-lifespan solution $u_c \in C_{t,{\rm loc}}([0, T_*^+), \dot{H}_x^1(\mathbb{R}^n))$ to \eqref{INLS} such that
\begin{align*}
\norm{L_t^\infty ([0,T_*^+), L_x^2)}{\nabla u_c} &= E_c < \norm{L^2}{\nabla W}^2, \\
\norm{L_t^{\frac{2(n+2)}{n-2}} ([0,T_*^+), L_x^{\frac{2(n+2)}{n-2}})}{u_c} &= \infty,
\end{align*}
and there is the scaling function $\lambda : [0,T_*^+) \to (0, \infty)$ such that the orbit
\[
K = \left\{ \lambda(t)^{-\frac{n-2}{2}} u_c \Big(t, \dfrac{x}{\lambda(t)}\Big) : 0 \leq t < T_*^+ \right\}
\]
is precompact in $\dot{H}_x^1(\mathbb{R}^n)$. We can also obtain a similar result backward in time.
\end{prop}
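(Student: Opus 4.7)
The plan is to run the Kenig--Merle concentration--compactness scheme with Proposition \ref{palaissmale} as the extraction tool. First I would introduce the scattering--size function
\[
L(E) := \sup \Big\{ \norm{L_{t,x}^{2(n+2)/(n-2)}(I)}{u} : u \text{ a maximal sub-threshold solution of \eqref{INLS} on } I,\ \sup_{t \in I} \norm{L_x^2}{\nabla u(t)}^2 \le E \Big\}
\]
and set $E_c := \sup\{E : L(E) < \infty\}$. Small-data theory (Theorem \ref{thrm:2.5-well-posedness}) gives $L(E) < \infty$ for $E$ sufficiently small, while the failure of Theorem \ref{mainthm} exhibits sub-threshold initial data with $\norm{L_x^2}{\nabla \phi} < \norm{L_x^2}{\nabla Q}$ whose solution has infinite scattering norm, so $0 < E_c < \norm{L_x^2}{\nabla Q}^2$. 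To extract the critical element, I pick a minimizing sequence of maximal solutions $u_m$ on $I_m$ with $\sup_t \norm{L_x^2}{\nabla u_m(t)}^2 \to E_c$ and $\norm{L_{t,x}^{2(n+2)/(n-2)}(I_m)}{u_m} \to \infty$; using continuity of the cumulative scattering integral I choose $t_m \in I_m$ so that the scattering norms on $[t_m, \infty) \cap I_m$ and on $(-\infty, t_m] \cap I_m$ both tend to infinity. Proposition \ref{palaissmale} then yields $\lambda_m > 0$ and a profile $\phi_c \in \dot{H}^1$ with $\lambda_m^{-(n-2)/2} u_m(t_m, \cdot/\lambda_m) \to \phi_c$ strongly in $\dot{H}^1$, and I let $u_c$ be the forward-in-time maximal solution with $u_c(0) = \phi_c$.

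The next step is to verify that $u_c$ realises $E_c$ and fails to scatter. Strong $\dot{H}^1$ convergence gives $\norm{L_x^2}{\nabla u_c(0)}^2 \le E_c$. If $u_c$ scattered forward, then the long-time perturbation theory (Lemmas \ref{longperturb..a>1} and \ref{longperturb}) applied to $u_m(\cdot + t_m, \cdot)$ rescaled by $\lambda_m$ against $u_c$ would force $\norm{L_{t,x}^{2(n+2)/(n-2)}([t_m, \infty) \cap I_m)}{u_m}$ to be bounded for large $m$, contradicting the construction of $t_m$. Minimality of $E_c$ therefore gives $\norm{L_x^2}{\nabla u_c(0)}^2 \ge E_c$, hence equality, and applying the same reasoning with $u_c(\tau)$ viewed as fresh data at any $\tau \in [0, T_*^+)$ upgrades this to $\sup_t \norm{L_x^2}{\nabla u_c(t)}^2 = E_c$ together with the failure of forward scattering from every time-slice.

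For precompactness of the modulated orbit, given any sequence $\tau_m \in [0, T_*^+)$, I pass to a subsequence and split into two cases. If $\tau_m \to \tau^\ast < T_*^+$, then $\dot{H}^1$-continuity of the flow gives $u_c(\tau_m) \to u_c(\tau^\ast)$ without rescaling. If $\tau_m \to T_*^+$, then monotone convergence yields $\norm{L_{t,x}^{2(n+2)/(n-2)}([0, \tau_m])}{u_c} \to \infty$, while $\norm{L_{t,x}^{2(n+2)/(n-2)}([\tau_m, T_*^+))}{u_c} = \infty$ for every $m$, since local theory gives finite scattering size on any compact subinterval and the total is infinite. Proposition \ref{palaissmale} applied to $u_m \equiv u_c$ at times $\tau_m$ then produces scaling factors $\lambda(\tau_m) > 0$ along which the rescaled sequence converges in $\dot{H}^1$. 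Patching the two regimes defines $\lambda : [0, T_*^+) \to (0, \infty)$ and shows that $K$ is precompact; the backward-in-time statement follows from the time-reversal symmetry $u(t,x) \mapsto \overline{u(-t,x)}$ of \eqref{INLS}.

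The main obstacle is the management of the Palais--Smale hypothesis on both temporal halves. Splitting the scattering norm $K_m$ of the minimizing sequence into two divergent halves requires a careful intermediate-value argument on the cumulative scattering integral, and in the compactness step the case $\tau_m \to T_*^+$ relies crucially on the divergence of $\norm{L_{t,x}^{2(n+2)/(n-2)}([0, \tau_m])}{u_c}$. Pinning down the sharp equality $\sup_t \norm{L_x^2}{\nabla u_c(t)}^2 = E_c$, as opposed to a mere inequality, also needs the minimality argument iterated at every time-slice together with the stability theory to transfer the non-scattering property from $u_c(0)$ to an arbitrary $u_c(\tau)$.
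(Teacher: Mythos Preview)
Your proposal is correct and follows precisely the standard Kenig--Merle/Killip--Visan extraction scheme that the paper itself defers to (it merely cites \cite[Propositions~4.1,~4.2]{KenigMerle} and \cite[Theorem~1.16]{KillipVisan10} in lieu of a written proof). Your handling of the two-sided divergence hypothesis of Proposition~\ref{palaissmale}, the stability-based transfer of non-scattering, and the sequential compactness argument at times $\tau_m \to T_*^+$ are all the expected steps, so there is nothing to add.
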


\section{Proof of Theorem \ref{mainthm}} \label{Section:4}

In the last section, with the main ingredients ready, we prove Theorem \ref{mainthm} by contradiction.
Any minimal energy blow-up solution ${u_c} : [0, T_*^+) \times \mathbb{R}^n \to \mathbb{C}$ to \eqref{INLS} with the compactness property described in Proposition \ref{compactness-of-orbit} will be eliminated.
We separate the proof into the finite-time blow-up case ($T_*^+ < \infty$) and the soliton case ($T_*^+ = \infty$).

\begin{prop}[No finite-time blow-up] \label{no-finitelife}~

There are no minimal energy blow-up solutions ${u_c}$ with $T_*^+ < \infty$ satisfying Proposition \ref{compactness-of-orbit}.
\end{prop}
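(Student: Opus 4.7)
Suppose for contradiction that a minimal energy blow-up solution $u_c$ with $T_*^+ < \infty$ satisfying Proposition~\ref{compactness-of-orbit} exists. The plan is to first establish that the concentration scale $\lambda(t)$ diverges as $t \to T_*^+$. If not, along some sequence $t_k \to T_*^+$ the $\lambda(t_k)$ remain bounded, and the precompactness of the orbit produces (after unpacking the scaling) a strong $\dot{H}^1$-convergent subsequence of $u_c(t_k)$ itself; then Theorem~\ref{thrm:2.5-well-posedness} applied to the limiting data at time $T_*^+$, combined with the stability theory (Lemma~\ref{longperturb..a>1} or Lemma~\ref{longperturb}), produces a solution defined past $T_*^+$, contradicting the maximality of $T_*^+$.

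Writing $v(t,y) := \lambda(t)^{-(n-2)/2} u_c(t, y/\lambda(t))$, precompactness of $\{v(t)\}$ in $\dot H^1 \hookrightarrow L^{2n/(n-2)}$ yields uniform spatial tightness: for every $\epsilon > 0$ there is $A_\epsilon > 0$ with
\begin{equation*}
\sup_{t\in[0,T_*^+)}\int_{|y|\geq A_\epsilon}\bigl(|\nabla v(t,y)|^2 + |v(t,y)|^{\frac{2n}{n-2}}\bigr)\,dy < \epsilon.
\end{equation*}
Fix a smooth bump $\phi \in C_c^\infty(\mathbb{R}^n;[0,1])$ with $\phi\equiv 1$ on $\{|x|\leq 1\}$ and $\operatorname{supp}\phi \subset \{|x|\leq 2\}$, and introduce the truncated variance
\begin{equation*}
V_R(t) := \int |x|^2\,\phi(x/R)\,|u_c(t,x)|^2\,dx.
\end{equation*}
The change of variables $y = \lambda(t)x$ and a H\"older split at $|y| = A_\epsilon$ give $V_R(t) \lesssim \lambda(t)^{-4}\,A_\epsilon^4\,E_c + R^4\,\epsilon^{(n-2)/n}$. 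Sending $t\to T_*^+$ (so $\lambda(t)\to\infty$) and then $\epsilon\to 0$ shows $V_R(t)\to 0$ as $t\to T_*^+$ for each fixed $R$.

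A virial computation using \eqref{INLS} (with $\mu = -1$) together with the energy-critical identity $(n\alpha + 2b)/(\alpha+2) = 2$ yields
\begin{equation*}
V_R''(t) = 8\|\nabla u_c(t)\|_{L^2}^2 - 8\int|x|^{-b}|u_c(t)|^{\alpha+2}\,dx + \mathcal{E}_R(t),
\end{equation*}
where the error $\mathcal{E}_R(t)$ is supported in $\{|x|\sim R\}$ and bounded by a multiple of the same tail integrals as in the tightness estimate. Under~\eqref{mainthm-threshold}, the variational analysis of Section~\ref{Subsection:2.3} supplies coercivity $\|\nabla u_c\|_{L^2}^2 - \int|x|^{-b}|u_c|^{\alpha+2}\,dx \geq \delta'\|\nabla u_c\|_{L^2}^2$ and energy trapping $\|\nabla u_c(t)\|_{L^2}^2 \sim E_c$. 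Choosing $R$ large enough that $|\mathcal{E}_R(t)| \leq 4\delta' E_c$ uniformly in $t$, we obtain $V_R''(t) \geq C_* > 0$ uniformly on $[0,T_*^+)$.

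Combining $V_R \geq 0$, $V_R(T_*^+) = 0$, and $V_R'' \geq C_*$ via a second-order Taylor expansion from $0$ to $T_*^+$ yields $V_R'(t) \leq -\frac{C_*}{2}(T_*^+ - t)$ for every $t \in [0,T_*^+)$, and in particular $|V_R'(0)| \geq \frac{C_*}{2}T_*^+ > 0$. \textbf{The main obstacle} is to contradict this lower bound with a sharp enough upper bound on $|V_R'(0)|$, since the a priori estimate $|V_R'(0)| \lesssim R^2 E_c$ is scale-invariant and therefore insufficient on its own. The plan is to decompose the integrand $\nabla(|x|^2\phi(x/R))\cdot\overline{u_c(0)}\,\nabla u_c(0)$ into a concentration region $\{|x|\leq A_\epsilon/\lambda(0)\}$ (contributing a term of order $A_\epsilon^2\,E_c/\lambda(0)^2$) and an annular tail (controlled by the same tightness from the second paragraph), normalize $\lambda(0)=1$ via the scale invariance of~\eqref{INLS}, and optimize the free parameters $R$ and $\epsilon$ so that the refined upper bound on $|V_R'(0)|$ falls strictly below $\frac{C_*}{2}T_*^+$, producing the desired contradiction.
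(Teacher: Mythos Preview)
Your setup through the convexity estimate $V_R''(t)\geq C_*>0$ and the conclusion $|V_R'(0)|\geq \tfrac{C_*}{2}T_*^+$ is correct, but the final optimization step does not close, and this is a genuine gap rather than a technicality.  The constraints on your parameters are: (i) $\epsilon\leq\epsilon_0$ is needed so that the virial error satisfies $|\mathcal{E}_R|\leq 4\delta'E_c$, hence $C_*>0$; (ii) $R\gtrsim A_\epsilon/\inf_t\lambda(t)$ is needed for the same error control (since the tail in $v$-variables lives at $|y|\geq R\lambda(t)$).  Under (i) and (ii) your refined upper bound becomes $|V_R'(0)|\lesssim A_\epsilon^2 E_c + R^2\epsilon^\theta \gtrsim A_{\epsilon_0}^2 E_c$, a \emph{fixed} positive number determined by the orbit.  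The lower bound $\tfrac{C_*}{2}T_*^+$ is another fixed positive number determined by $u_c$, and there is no mechanism forcing one below the other.  Indeed the inequality you are trying to violate is scale-invariant (both sides transform as $\lambda^2$ under the scaling of \eqref{INLS}), so the normalization $\lambda(0)=1$ buys nothing.  At best your argument shows $T_*^+\lesssim A_{\epsilon_0}^2$, which is a bound on the blow-up time, not a contradiction.

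The paper's argument is entirely different and avoids the virial identity: it uses a reduced Duhamel formula (weak convergence of $e^{i(t-T)\Delta}u_c(T)$ to zero as $T\nearrow T_*^+$) together with a Littlewood--Paley split to show directly that $\|u_c(t)\|_{L^2}\lesssim M(T_*^+-t)E_c^{\alpha+1}+M^{-1}E_c$ for every dyadic $M$, whence $M[u_c]=0$ by letting $t\nearrow T_*^+$ and $M\to\infty$.  If you prefer a physical-space argument in the spirit of your attempt, the classical Kenig--Merle route uses the \emph{local mass} $y_R(t)=\int\phi(x/R)|u_c(t)|^2\,dx$ and only \emph{one} time derivative: the key point is that $|y_R'(t)|\lesssim \|\nabla u_c(t)\|_{L^2}^2\lesssim E_c$ \emph{uniformly in $R$} (via Hardy's inequality), which combined with $y_R(T_*^+)=0$ gives $y_R(0)\lesssim E_c\,T_*^+$ for all $R$, hence $u_c(0)\in L^2$, and then mass conservation forces $M[u_c]=0$.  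The uniform-in-$R$ bound on the first derivative is exactly what your second-derivative variance approach lacks.
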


\begin{proof}
We use the following lemma introduced by \cite[Section 6]{TaoVisanZhang08}. (See also \cite[Proposition 5.23]{KillipVisan13}.)
\begin{lem} [Reduced Duhamel formula] \label{redduhamel} ~

For each $t \in [0, T_*^+)$, the following weak convergence holds as $T \nearrow T_*^+$.
\[
{u_c}(t) + i\mu \int_{t}^{T} e^{i(t-\tau)\Delta} \big( |x|^{-b} |{u_c}|^\alpha {u_c} \big)(\tau) \,d\tau \xrightharpoonup{\quad} 0
\text{\quad in $\dot{H}_x^1(\mathbb{R}^d)$}
\]
\end{lem}
\noindent
Combined with Bernstein's inequality, Lemma \ref{redduhamel} implies ${u_c} \in C_t([0,T_*^+), L_x^2(\mathbb{R}^n))$ with
\begin{align*}
\norm{L_x^2}{{u_c}(t)}
&\leq	\norm{L_x^2} {P_{\leq M} {u_c}(t)} + \norm{L_x^2} {P_{>M} {u_c}(t)} \\
&\lesssim	\norm{L_t^1([t,T_*^+), L_x^2)} {P_{\leq M} \big( |x|^{-b} |{u_c}|^\alpha {u_c} \big)} + M^{-1} \norm{L_x^2} {\nabla {u_c}(t)} \\
&\lesssim	M (T_*^+ - t) \norm{L_t^\infty([0,T_*^+), L_x^{\frac{2n}{n+2}, 2})} {|x|^{-b} |{u_c}|^\alpha {u_c}} + M^{-1} E_c \\
&\lesssim	M (T_*^+ - t) E_c^{\alpha + 1} + M^{-1} E_c.
\end{align*}
for every time $t \in [0,T_*^+)$ and frequency $M \in 2^\mathbb{Z}$.
Therefore, by the conservation of mass, taking $M$ arbitrarily large and $t \nearrow T_*^+$, we conclude that $M[{u_c}] = 0$ and hence ${u_c} \equiv 0$, which is a contradiction.
Compared to \cite[Subsection 3.1 Claim 1]{GuzmanXu24.6}, the use of Lorentz spaces relieves the need for Hardy inequality, and we no longer observe the additional range restrictions to $n$ and $b$.
\end{proof}

\begin{prop}[No soliton] \label{no-soliton}~

There are no minimal energy blow-up solutions ${u_c}$ with $T_*^+ = \infty$ satisfying Proposition \ref{compactness-of-orbit}.
\end{prop}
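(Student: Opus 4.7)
The plan is to eliminate soliton-type minimal blow-up solutions via a localized virial (Pohozaev) identity, exploiting the coercivity established in Subsection 2.3 together with the precompactness of the rescaled orbit $K$. For any smooth weight $a:\mathbb{R}^n\to\mathbb{R}$ and solution $u$ of \eqref{INLS}, a standard computation yields
\[
\frac{d^2}{dt^2}\!\int a(x)|u(t,x)|^2 dx
= \int\!\big(4\partial_j\partial_k a\, \partial_j u\,\overline{\partial_k u} - \Delta^2 a\,|u|^2\big) dx
+ \mu\!\int\!\Big(\tfrac{2\alpha\Delta a}{\alpha+2} + \tfrac{4b\,x\cdot\nabla a}{(\alpha+2)|x|^{2}}\Big)|x|^{-b}|u|^{\alpha+2} dx.
\]
Specializing to $a(x)=|x|^2$ and using the energy-critical identities $n\alpha+2b = \tfrac{4(n-b)}{n-2}$ and $\alpha+2 = \tfrac{2(n-b)}{n-2}$, the nonlinear coefficient $\tfrac{4(n\alpha+2b)}{\alpha+2}$ collapses to $8$, matching the kinetic coefficient. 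In the focusing case $\mu=-1$ this gives the Pohozaev-type identity
\[
\tfrac{d^2}{dt^2}\!\int |x|^2 |u_c|^2 dx \;=\; 8\!\int\!\big(|\nabla u_c|^2 - |x|^{-b}|u_c|^{\alpha+2}\big) dx \;\geq\; 8\delta' E_c \;>\; 0,
\]
by the coercivity lemma of the variational analysis subsection.

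Before turning the formal identity into a rigorous contradiction, I would establish (i) $u_c\in L^\infty_t L^2_x$ and (ii) $\inf_{t\ge 0}\lambda(t)>0$. Both reduce to arguments that parallel the proof of Proposition \ref{no-finitelife}: the reduced Duhamel formula (Lemma \ref{redduhamel}) applied to the low-frequency projection of $u_c$, combined with the Lorentz-based Strichartz estimates from Section 2, yields uniform $L^2_x$ control, and a compactness/concentration argument in the style of \cite{KenigMerle} precludes $\lambda(t_m)\to 0$ along any subsequence (which would force $M[u_c]=0$, contradicting $E_c>0$). Moreover, the scale invariance of $\int |\nabla u|^2 dx$ and $\int|x|^{-b}|u|^{\alpha+2}dx$, together with the precompactness of $K$ in $\dot H^1$, yields the tail estimate
\[
\int_{|x|\ge C(\eta)/\lambda(t)}\!\!\big(|\nabla u_c(t)|^2 + |x|^{-b}|u_c(t)|^{\alpha+2}\big) dx < \eta, \qquad t\ge 0,
\]
for every $\eta > 0$.

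I now truncate: let $\chi\in C^\infty(\mathbb{R}^n)$ be radial with $\chi(x)=|x|^2$ for $|x|\le 1$ and $\chi$ constant for $|x|\ge 2$, and set $a_R(x)=R^2\chi(x/R)$. Define $V_R(t) = \int a_R(x)|u_c(t,x)|^2 dx$. Since $0\le a_R \le CR^2$ and $|\nabla a_R|\lesssim R$, step (i) yields $V_R(t)\le CR^2 M[u_c]$ and $|V'_R(t)|\lesssim R\,M[u_c]^{1/2} E_c^{1/2}$ by Cauchy--Schwarz. Applying the virial identity of the first paragraph with $a=a_R$, the main term reproduces the untruncated integrand on $|x|\le R$, while the error terms are supported on $\{R\le|x|\le 2R\}$ with coefficients bounded uniformly in $R$; choosing $R\cdot\inf_t\lambda(t)$ larger than the localization radius $C(\eta)$ with $\eta$ small yields $V''_R(t) \ge 4\delta' E_c$ for all $t\ge 0$. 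Integrating twice gives $2\delta' E_c T^2 \le V_R(T) + |V_R(0)| + T|V'_R(0)| \le C(R^2 + RT)$; fixing $R$ and sending $T\to\infty$ forces $\delta' E_c \le 0$, the desired contradiction.

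The main obstacle is the second paragraph: obtaining $u_c\in L^\infty_t L^2_x$ together with $\inf_t\lambda(t)>0$ without a radial-data assumption. The low nonlinearity regime $\alpha\le 1$ complicates the Duhamel/Bernstein estimates, but the Lorentz-based Strichartz machinery developed in Section 2 should suffice to push the argument of \cite{KenigMerle} through for every $n\ge 3$ and $0<b<\min(2,n/2)$.
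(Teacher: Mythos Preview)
Your localized virial computation and the coercivity step are correct, but step (i)---proving $u_c\in L_t^\infty L_x^2$---is a genuine gap. The reduced Duhamel/Bernstein argument you borrow from Proposition~\ref{no-finitelife} works there \emph{only} because $T_*^+<\infty$: the bound $\|P_{\le M}u_c(t)\|_{L_x^2}\lesssim M(T_*^+-t)E_c^{\alpha+1}$ comes from integrating in $L_t^1$ over the finite interval $[t,T_*^+)$. When $T_*^+=\infty$ that time integral diverges, and no Lorentz--Strichartz input from Section~\ref{Section:2} repairs it. Obtaining finite mass for an infinite-lifespan critical element is, when possible at all, a substantial separate argument (double Duhamel or the negative-regularity machinery of \cite{KillipVisan10,KillipVisan13}), not a corollary of the local theory. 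Your step (ii) is moreover chained to step (i): the reasoning ``$\lambda(t_m)\to 0\Rightarrow M[u_c]=0$'' already presupposes that $M[u_c]$ is defined and finite.

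The paper circumvents (i) entirely by working one order lower: instead of bounding $V_R$ and integrating $V_R''$ twice, it bounds the Morawetz action $Z(t)=2\,\mathrm{Im}\int \bar u_c\,\nabla u_c\cdot\nabla a\,dx$ (which is precisely your $V_R'$) directly and integrates once. Since $|\nabla a|\lesssim R$ with support in $\{|x|\le 2R\}$, H\"older and Sobolev give $|Z(t)|\lesssim R\,\|u_c\|_{L_x^{2n/(n-2)}}\|\nabla u_c\|_{L_x^2}\|\mathbf 1_{|x|\le 2R}\|_{L_x^n}\lesssim R^2\|\nabla u_c\|_{L_x^2}^2$, an estimate using only $\dot H^1$ information. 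The same coercivity then yields $\partial_t Z(t)\ge \delta E[u_c]-o_R(1)$, so integrating over $[0,T]$ gives $(\delta E[u_c]-o_R(1))\,T\lesssim R^2 E[u_c]$; fixing $R$ large and sending $T\to\infty$ is the contradiction. The uniform-in-$t$ vanishing of the tails needed for the $o_R(1)$ term---essentially your step (ii)---is still required, but that follows from the precompactness of $K$ via a purely $\dot H^1$-level argument in the style of \cite[Lemma~5.5]{KenigMerle}, without any appeal to finite mass.
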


\begin{proof}
We revisit the proof of \cite[Subsection 3.1 Claim 2]{GuzmanXu24.6} for completeness. We can see by direct computation that the argument works for every $n\geq 3$ and $0<b<\min(2,n/2)$.
We start with recalling the Morawetz identity as appears in several previous studies for \eqref{INLS}.
For any solution $u(t,x)$ to \eqref{INLS},
given the quantity
\[
Z(t) = 2\,\mathrm{Im} \int_{\mathbb{R}^n} \bar{u}\nabla u \cdot \nabla a ~dx
\]
where $a : \mathbb{R}^n \to \mathbb{R}$ is any smooth function with compact support,
we compute and observe the identity
\begin{align*}
\partial_t Z(t)
&=	4 \sum_{j,k=1}^{n} \int_{\mathbb{R}^n} \mathrm{Re}(\overline{\partial_{x_j}u} {\partial_{x_k}u}) \partial_{x_j}\partial_{x_k}a ~dx
- \int_{\mathbb{R}^n} |u|^2 \Delta^2 a ~dx \\
&\quad	+ \frac{2\alpha}{\alpha+2} \int_{\mathbb{R}^n} \mu |x|^{-b} |u|^{\alpha+2} \Delta a ~dx
- \frac{4}{\alpha+2} \int_{\mathbb{R}^n} |u|^{\alpha+2} (-\mu b|x|^{-b-2}x) \cdot \nabla a ~dx.
\end{align*}
Choose $a(x) = R^2 \psi\Big(\dfrac{|x|}{R}\Big)$, where $\psi : [0, \infty) \to [0, \infty)$ is a smooth function such that $\psi(r) = r^2$ for $r \in [0, 1]$ and $\psi(r) = 0$ for $r \in [2, \infty)$.
Take $u = u_c$. After further computation, we observe
\begin{align*}
\partial_t Z(t)
=&\;	8 \int_{\mathbb{R}^n} |\nabla u_c|^2 + \mu |x|^{-b} |u_c|^{\alpha+2} ~dx \\
&	+ O\left( \int_{|x| > R} |\nabla u_c|^2 ~dx + |x|^{-b} |u_c|^{\alpha+2} + |x|^{-2}|u_c|^2 ~dx \right).
\end{align*}
By the compactness property in Proposition \ref{compactness-of-orbit},
we can deduce that the big-$O$ terms in the right hand side vanish uniformly in $t$ as $R \to \infty$, which is analogous to \cite[Lemma 5.5]{KenigMerle}. Thus, for every $t \in [0,\infty)$, we have by the variational analysis that
\[
\partial_t Z(t) \geq \delta \int_{\mathbb{R}^n} |\nabla u_c(t)|^2 dx - o(1) \gtrsim \delta E[u_c] - o(1)
\]
where $o(1) \to 0$ as $R \to \infty$.
Meanwhile, we see also from H\"older's inequality that
\[
\sup_{t\in[0,\infty)} |Z(t)|
\lesssim	R^2 \norm{L_t^\infty ([0,\infty), L_x^2)} {\nabla u_c}
\sim	R^2 E[u_c],
\]
and thus the Fundamental Theorem of Calculus to $Z(t)$ should imply
\[
\big(\delta E[u_c] - o(1)\big)T
\lesssim \int_{0}^{T} \partial_t Z(t) dt
\leq |Z(T)| + |Z(0)|
\lesssim R^2 E[u_c]
\]
for every $0 < R,T < \infty$.
However, fixing $0<R<\infty$ large enough, the left-hand side grows indefinitely as $T \to \infty$, which is a contradiction.

\end{proof}

\bibliographystyle{plain}
\bibliography{INLS-scatter-foc-ref}

\begin{thebibliography}{10}

\bibitem{AlouiTayachi}
Lassaad Aloui and Slim Tayachi.
\newblock Local well-posedness for the inhomogeneous nonlinear
  {S}chr{\"o}dinger equation.
\newblock {\em Discrete and Continuous Dynamical Systems}, 41(11):5409--5437,
  2021.
\newblock doi:10.3934/dcds.2021082.

\bibitem{BelmonteBeitia200798}
Juan Belmonte-Beitia, V{\'i}ctor~M. P{\'e}rez-Garc{\'i}a, Vadym Vekslerchik,
  and Pedro~J. Torres.
\newblock {L}ie symmetries and solitons in nonlinear systems with spatially
  inhomogeneous nonlinearities.
\newblock {\em Physical Review Letters}, 98:064102, Feb 2007.
\newblock doi:10.1103/PhysRevLett.98.064102.

\bibitem{BerghLofstrom76}
J{\"o}ran Bergh and J{\"o}rgen L{\"o}fstr{\"o}m.
\newblock {\em Interpolation Spaces: An Introduction}.
\newblock Grundlehren der mathematischen Wissenschaften. Springer-Verlag Berlin
  Heidelberg, 1976.

\bibitem{Bourgain99}
Jean Bourgain.
\newblock Global wellposedness of defocusing critical nonlinear
  {S}chr{\"o}dinger equation.
\newblock {\em Journal of the American Mathematical Society}, 12(1):145--171,
  1999.
\newblock doi:10.1090/S0894-0347-99-00283-0.

\bibitem{Bulut23}
Aynur Bulut.
\newblock {B}low-up criteria below scaling for defocusing energy-supercritical
  {NLS} and quantitative global scattering bounds.
\newblock {\em American Journal of Mathematics}, 145(2):543--567, 2023.
\newblock doi:10.1353/ajm.2023.0013.

\bibitem{CamposCardoso}
Luccas Campos and Mykael Cardoso.
\newblock A virial-{M}orawetz approach to scattering for the non-radial
  inhomogeneous {NLS}.
\newblock {\em Proceedings of the American Mathematical Society},
  150(5):2007--2021, 2022.
\newblock doi:10.1090/proc/15680.

\bibitem{CardosoFGM20}
Mykael Cardoso, Luiz~Gustavo Farah, Carlos~M. Guzm{\'a}n, and Jason Murphy.
\newblock Scattering below the ground state for the intercritical non-radial
  inhomogeneous {NLS}.
\newblock {\em Nonlinear Analysis: Real World Applications}, 68:103687, 2022.
\newblock doi:10.1016/j.nonrwa.2022.103687.

\bibitem{ChengGuoZheng23}
Xing Cheng, Chang-Yu Guo, and Yunrui Zheng.
\newblock Global weak solution of {3-D} focusing energy-critical nonlinear
  {S}chr{\"o}dinger equation.
\newblock 2023.
\newblock Preprint, arXiv:2308.01226.

\bibitem{YonggeunCho19}
Yonggeun Cho.
\newblock {W}ell-posedness and scattering of inhomogeneous cubic-quintic {NLS}.
\newblock {\em Journal of Mathematical Physics}, 60(8), 08 2019.
\newblock doi:10.1063/1.5053131.

\bibitem{ChoHongLee20}
Yonggeun Cho, Seokchang Hong, and Kiyeon Lee.
\newblock On the global well-posedness of focusing energy-critical
  inhomogeneous {NLS}.
\newblock {\em Journal of Evolution Equations}, 20(4):1349--1380, 2020.
\newblock doi:10.1007/s00028-020-00558-1.

\bibitem{ChoLee21}
Yonggeun Cho and Kiyeon Lee.
\newblock On the focusing energy-critical inhomogeneous {NLS}: weighted space
  approach.
\newblock {\em Nonlinear Analysis}, 205:112261, 2021.
\newblock doi:10.1016/j.na.2021.112261.

\bibitem{ChristWeinstein91}
F.~M. Christ and M.~I. Weinstein.
\newblock Dispersion of small amplitude solutions of the generalized
  {K}orteweg-de {V}ries equation.
\newblock {\em Journal of Functional Analysis}, 100(1):87--109, 1991.
\newblock doi:10.1016/0022-1236(91)90103-C.

\bibitem{CollianderKSTT08}
James Colliander, Markus Keel, Gigliola Staffilani, Hideo Takaoka, and Terence
  Tao.
\newblock Global well-posedness and scattering for the energy-critical
  nonlinear {S}chr{\"o}dinger equation in {$\mathbb{R}^3$}.
\newblock {\em Annals of Mathematics}, 167(3):767--865, 2008.
\newblock doi:10.4007/annals.2008.167.767.

\bibitem{Dinh2019}
Van~Duong Dinh.
\newblock {E}nergy scattering for a class of the defocusing inhomogeneous
  nonlinear {S}chr{\"o}dinger equation.
\newblock {\em Journal of Evolution Equations}, 19(2):411--434, Jun 2019.
\newblock doi:10.1007/s00028-019-00481-0.

\bibitem{Dinh20u}
Van~Duong Dinh.
\newblock {A} unified approach for energy scattering for focusing nonlinear
  {S}chr{\"o}dinger equations.
\newblock {\em Discrete and Continuous Dynamical Systems}, 40(11):6461--6491,
  2020.
\newblock doi:10.3934/dcds.2020286.

\bibitem{DinhKeraani21}
Van~Duong Dinh and Sahbi Keraani.
\newblock {L}ong time dynamics of nonradial solutions to inhomogeneous
  nonlinear {S}chr{\"o}dinger equations.
\newblock {\em SIAM Journal on Mathematical Analysis}, 53(4):4765--4811, 2021.
\newblock doi:10.1137/20M1383434.

\bibitem{Dodson19}
Benjamin Dodson.
\newblock Global well-posedness and scattering for the focusing, cubic
  {S}chr{\"o}dinger equation in dimension {$d=4$}.
\newblock {\em Annales scientifiques de l'{\'E}cole normale sup{\'e}rieure},
  2019.

\bibitem{DodsonMurphyRad}
Benjamin Dodson and Jason Murphy.
\newblock A new proof of scattering below the ground state for the 3{D} radial
  focusing cubic {NLS}.
\newblock {\em Proceedings of the American Mathematical Society},
  145(11):4859--4867, 2017.
\newblock doi:10.1090/proc/13678.

\bibitem{DodsonMurphy18}
Benjamin Dodson and Jason Murphy.
\newblock {A} new proof of scattering below the ground state for the non-radial
  focusing {NLS}.
\newblock {\em Mathematical Research Letters}, 25(6):1805--1825, 2018.
\newblock doi:10.4310/MRL.2018.v25.n6.a5.

\bibitem{Duyckaerts08}
Thomas Duyckaerts, Justin Holmer, and Svetlana Roudenko.
\newblock Scattering for the non-radial {3D} cubic nonlinear {S}chr\"odinger
  equation.
\newblock {\em Mathematical Research Letters}, 15(6):1233--1250, 2008.
\newblock doi:10.4310/MRL.2008.v15.n6.a13.

\bibitem{FangXieCazenave11}
DaoYuan Fang, Jian Xie, and Thierry Cazenave.
\newblock Scattering for the focusing energy-subcritical nonlinear
  {S}chr{\"o}dinger equation.
\newblock {\em Science China Mathematics}, 54:2037--2062, 2011.

\bibitem{Farah-gKdV}
Luiz~G Farah, Felipe Linares, Ademir Pastor, and Nicola Visciglia.
\newblock Large data scattering for the defocusing supercritical generalized
  {KdV} equation.
\newblock {\em Communications in Partial Differential Equations},
  43(1):118--157, 2018.
\newblock doi:10.1080/03605302.2018.1439063.

\bibitem{FarahGuzman17}
Luiz~Gustavo Farah and Carlos~M. Guzm{\'a}n.
\newblock {S}cattering for the radial 3{D} cubic focusing inhomogeneous
  nonlinear {S}chr{\"o}dinger equation.
\newblock {\em Journal of Differential Equations}, 262(8):4175--4231, 2017.
\newblock doi:10.1016/j.jde.2017.01.013.

\bibitem{FarahGuzman20}
Luiz~Gustavo Farah and Carlos~M. Guzm{\'a}n.
\newblock {S}cattering for the radial focusing inhomogeneous {NLS} equation in
  higher dimensions.
\newblock {\em Bulletin of the Brazilian Mathematical Society, New Series},
  51(2):449--512, 2020.
\newblock doi:10.1007/s00574-019-00160-1.

\bibitem{FibichWang03}
Gadi Fibich and Xiao-Ping Wang.
\newblock Stability of solitary waves for nonlinear {S}chr{\"o}dinger equations
  with inhomogeneous nonlinearities.
\newblock {\em Physica D: Nonlinear Phenomena}, 175(1-2):96--108, 2003.
\newblock doi:10.1016/S0167-2789(02)00626-7.

\bibitem{Genoud07}
Fran{\c{c}}ois Genoud and Charles~A. Stuart.
\newblock Schr\"odinger equations with a spatially decaying nonlinearity:
  existence and stability of standing waves.
\newblock {\em Discrete and Continuous Dynamical Systems}, 21(1):137, 2008.
\newblock doi:10.3934/dcds.2008.21.137.

\bibitem{Gill2000}
Tarsem~Singh Gill.
\newblock {O}ptical guiding of laser beam in nonuniform plasma.
\newblock {\em Pramana}, 55(5):835, November 2000.
\newblock doi:10.1007/s12043-000-0051-z.

\bibitem{GuzmanMurphy21}
Carlos~M. Guzm{\'a}n and Jason Murphy.
\newblock {S}cattering for the non-radial energy-critical inhomogeneous {NLS}.
\newblock {\em Journal of Differential Equations}, 295:187--210, 2021.
\newblock doi:10.1016/j.jde.2021.05.055.

\bibitem{GuzmanXu24.6}
Carlos~M. Guzm{\'a}n and Chenbgin Xu.
\newblock Dynamics of the non-radial energy-critical inhomogeneous {NLS}.
\newblock 2024.
\newblock Preprint, arXiv:2406.07535.

\bibitem{HolmerRoudenko08}
Justin Holmer and Svetlana Roudenko.
\newblock A sharp condition for scattering of the radial {3D} cubic nonlinear
  {S}chr{\"o}dinger equation.
\newblock {\em Communications in mathematical physics}, 282(2):435--467, 2008.
\newblock doi:10.1007/s00220-008-0529-y.

\bibitem{KeelTao}
Markus Keel and Terence Tao.
\newblock Endpoint {S}trichartz estimates.
\newblock {\em American Journal of Mathematics}, 120(5):955--980, 1998.
\newblock doi:10.1353/ajm.1998.0039.

\bibitem{KenigMerle}
Carlos~E. Kenig and Frank Merle.
\newblock Global well-posedness, scattering and blow-up for the
  energy-critical, focusing, non-linear {S}chr{\"o}dinger equation in the
  radial case.
\newblock {\em Inventiones mathematicae}, 166:645--675, 2006.
\newblock doi:10.1007/s00222-006-0011-4.

\bibitem{Keraani01}
Sahbi Keraani.
\newblock On the defect of compactness for the {S}trichartz estimates of the
  {S}chr\"odinger equations.
\newblock {\em Journal of Differential Equations}, 175(2):353--392, 2001.

\bibitem{KillipVisan10}
Rowan Killip and Monica Visan.
\newblock The focusing energy-critical nonlinear {S}chr{\"o}dinger equation in
  dimensions five and higher.
\newblock {\em American Journal of Mathematics}, 132(2):361--424, 2010.
\newblock doi:10.1353/ajm.0.0107.

\bibitem{KillipVisan13}
Rowan Killip and Monica Visan.
\newblock Nonlinear {S}chr\"odinger equations at critical regularity.
\newblock In {\em Evolution equations}, volume~17 of {\em Clay Mathematics
  Proceedings}, pages 325--437. American Mathematical Society, Providence, RI,
  2013.

\bibitem{KihyunKim-Kwon-gBO}
Kihyun Kim and Soonsik Kwon.
\newblock Scattering for defocusing generalized benjamin-ono equation in the
  energy space {$H^{\frac{1}{2}}(\mathbb{R})$}.
\newblock {\em Transactions of the American Mathematical Society},
  372(7):5011--5067, 2019.
\newblock doi:10.1090/tran/7831.

\bibitem{KochTataruVisan14}
H.~Koch, D.~Tataru, and M.~Visan.
\newblock {\em Dispersive Equations and Nonlinear Waves}.
\newblock Birkh{\"a}user, 2014.

\bibitem{Lemarie02}
Pierre~Gilles Lemari{\'e}-Rieusset.
\newblock {\em Recent developments in the {N}avier-{S}tokes problem}.
\newblock CRC press, 2002.
\newblock doi:10.1201/9780367801656.

\bibitem{LiuZhang24-INLS}
Xuan Liu and Ting Zhang.
\newblock Global existence, scattering and blow-up for the focusing
  energy-critical inhomogeneous {NLS}.
\newblock Preprint (submitted), doi:10.13140/RG.2.2.33126.43842.

\bibitem{LiuWangWang06}
Yue Liu, Xiao-Ping Wang, and Ke~Wang.
\newblock Instability of standing waves of the {S}chr{\"o}dinger equation with
  inhomogeneous nonlinearity.
\newblock {\em Transactions of the American Mathematical Society},
  358(5):2105--2122, 2006.
\newblock doi:10.1090/S0002-9947-05-03763-3.

\bibitem{Merle96}
Franck Merle.
\newblock Nonexistence of minimal blow-up solutions of equations {$iu_t = -
  \Delta u-k(x)|u|^{4/N} u$} in {$\mathbb{R}^N$}.
\newblock {\em Annales de l'I.H.P. Physique théorique}, 64(1):33--85, 1996.

\bibitem{MerleRRS22}
Frank Merle, Pierre Rapha{\"e}l, Igor Rodnianski, and Jeremie Szeftel.
\newblock On blow up for the energy super critical defocusing nonlinear
  {S}chr{\"o}dinger equations.
\newblock {\em Inventiones mathematicae}, 227(1):247--413, Jan 2022.
\newblock doi:10.1007/s00222-021-01067-9.

\bibitem{MiaoMurphyZhang19}
Changxing Miao, Jason Murphy, and Jiqiang Zheng.
\newblock {S}cattering for the non-radial inhomogeneous {NLS}.
\newblock {\em Mathematical Research Letters}, 28(5):1481--1504, 2021.
\newblock doi:10.4310/MRL.2021.v28.n5.a9.

\bibitem{ONeil63}
Richard O'Neil.
\newblock Convolution operators and {$L^{p,q}$} spaces.
\newblock {\em Duke Mathematical Journal}, 30(1):129--142, 1963.
\newblock doi:10.1215/S0012-7094-63-03015-1.

\bibitem{Park24128202}
Dongjin Park.
\newblock Global well-posedness and scattering of the defocusing
  energy-critical inhomogeneous nonlinear {S}chrödinger equation with radial
  data.
\newblock {\em Journal of Mathematical Analysis and Applications},
  536(2):128202, 2024.
\newblock doi:10.1016/j.jmaa.2024.128202.

\bibitem{RyckmanVisan07}
Eric Ryckman and Monica Visan.
\newblock Global well-posedness and scattering for the defocusing
  energy-critical nonlinear {S}chr{\"o}dinger equation in {$\mathbb{R}^{1+4}$}.
\newblock {\em American Journal of Mathematics}, 129(1):1--60, 2007.
\newblock doi:10.1353/ajm.2007.0004.

\bibitem{Strichartz67}
Robert~S. Strichartz.
\newblock Multipliers on fractional {S}obolev spaces.
\newblock {\em Journal of Mathematics and Mechanics}, 16(9):1031--1060, 1967.

\bibitem{TangShukla200776}
Xiao-Yan Tang and Padma~Kant Shukla.
\newblock {S}olution of the one-dimensional spatially inhomogeneous
  cubic-quintic nonlinear {S}chr{\"o}dinger equation with an external
  potential.
\newblock {\em Physical Review A}, 76:013612, Jul 2007.
\newblock doi:10.1103/PhysRevA.76.013612.

\bibitem{Tao05}
Terence Tao.
\newblock Global well-posedness and scattering for the higher-dimensional
  energy-critical nonlinear {S}chr{\"o}dinger equation for radial data.
\newblock {\em The New York Journal of Mathematics [electronic only]},
  11:57--80, 2005.

\bibitem{TaoVisan05}
Terence Tao and Monica Visan.
\newblock Stability of energy-critical nonlinear {S}chr\"odinger equations in
  high dimensions.
\newblock {\em Electronic Journal of Differential Equations}, 2005(118):1--28,
  2005.

\bibitem{TaoVisanZhang08}
Terence Tao, Monica Visan, and Xiaoyi Zhang.
\newblock Minimal-mass blowup solutions of the mass-critical {NLS}.
\newblock {\em Forum mathematicum}, 20(5):881 -- 919, 2008.
\newblock doi:10.1515/FORUM.2008.042.

\bibitem{Visan07}
Monica Visan.
\newblock The defocusing energy-critical nonlinear {S}chr{\"o}dinger equation
  in higher dimensions.
\newblock {\em Duke Mathematical Journal}, 138(2):281--374, 2007.
\newblock doi:10.1215/S0012-7094-07-13825-0.

\bibitem{Visciglia09}
Nicola Visciglia.
\newblock {O}n the decay of solutions to a class of defocusing {NLS}.
\newblock {\em Mathematical Research Letters}, 16(5), 2009.
\newblock doi:10.4310/MRL.2009.v16.n5.a14.

\bibitem{Yanagida}
Eiji Yanagida.
\newblock Uniqueness of positive radial solutions of {$\Delta u + g(r)u +
  h(r)u^p= 0$} in {$\mathbb{R}^n$}.
\newblock {\em Archive for rational mechanics and analysis}, 115:257--274,
  1991.
\newblock doi:10.1007/BF00380770.

\end{thebibliography}

\end{document}